\definecolor{ceruleanblue}{rgb}{0.16, 0.32, 0.75}
\renewcommand\theenumi{\roman{enumi}}
\renewcommand\labelenumi{(\theenumi)}
\newcommand\note[1]%
\def\today{\number\year-\ifnum\month<10
0\fi\number\month-\ifnum\day<10 0\fi\number\day}
\def\hour{\ifnum\count253<10
0\number\count253\else\number\count253\fi}
\def\minute{\ifnum\count254<10
0\number\count254\else\number\count254\fi}
\newcounter{num}
{\begin{list}{\hskip\labelsep\arabic{num}.}%
{\usecounter{num}%
\setlength\leftmargin{0em}\setlength\topsep{0em}%
\setlength\parsep{0em}\setlength\partopsep{0em}%
\setlength\itemsep{0em}\setlength\labelwidth{0em}}}%
{\end{list}}
\numberwithin{equation}{subsection}
\newtheorem{theorem}[equation]{Theorem}
\newtheorem{lemma}[equation]{Lemma}
\newtheorem{proposition}[equation]{Proposition}
\newtheorem{corollary}[equation]{Corollary}
\theoremstyle{definition}
\newtheorem{remark}[equation]{Remark}
\newtheorem*{notation*}{Notation}
\newcommand\lie{\mathfrak}
\newcommand\g{\lie{g}}
\newcommand\h{\lie{h}}
\newcommand\liek{\lie{k}}
\newcommand\X{\lie{X}}
\newcommand\bb[1]{{\text{\bf#1}}}
\newcommand\Z{\bb{Z}} 
\newcommand\R{\bb{R}} 
\newcommand\C{\bb{C}}
\newcommand\M{\bb{M}}
\newcommand\T{\bb{T}}
\newcommand\V{\bb{V}}
\newcommand\W{\bb{W}}
\newcommand\ca{\mathscr}
\DeclareMathOperator\Ad{Ad}
\DeclareMathOperator\ad{ad}
\DeclareMathOperator\End{End}
\DeclareMathOperator\ev{ev}
\DeclareMathOperator\Hom{Hom}
\DeclareMathOperator\id{id}
\DeclareMathOperator\Pf{Pf}
\DeclareMathOperator\pr{pr}
\DeclareMathOperator\supp{supp}
\newcommand\group[1]{{\text{\bf#1}}}
\newcommand\SO{\group{SO}}
\newcommand\GL{\group{GL}}
\newcommand\A{\group{A}}
\newcommand\D{\group{D}}
\newcommand\E{\group{E}}
\newcommand\FF{\group{F}}
\newcommand\abs[1]{\lvert#1\rvert}
\newcommand\inner[1]{\langle#1\rangle}
\newcommand\qu[1][\kern.3ex]{/\kern-.7ex/_{\kern-.4ex#1}}
\newcommand\bigqu[1][\,\,]{\big/\kern-.85ex\big/_{\!\!#1}}
\newcommand\powl{[\kern-.3ex[}
\newcommand\powr{]\kern-.3ex]}
\newcommand\bigpowl{\bigl[\kern-.6ex\bigl[}
\newcommand\bigpowr{\bigr]\kern-.6ex\bigr]}
\newcommand\inj{\hookrightarrow}
\newcommand\sur{\mathrel{\to\kern-1.8ex\to}}
\newcommand\iso{\mathrel{\hookrightarrow\kern-1.8ex\to}}
\newcommand\longto{\longrightarrow}
\newcommand\longhookrightarrow{\lhook\joinrel\longrightarrow}
\newcommand\longinj{\longhookrightarrow}
\newcommand\longsur{\mathrel{\longrightarrow\kern-1.8ex\to}}
\newcommand\Thom{\group{Th}}
\newcommand\Eu{\group{Eu}}
\newcommand\zerodots%
\newcommand\bigzerodots%
\newcommand\F{{\ca{F}}}
\newcommand\barX{\lie{X}\kern-0.55em\overline{\phantom I}\kern0.15em}
\newcommand\barF{\F\kern-0.6em\overline{\phantom{\rm F}}}
\newcommand\cc{{\mathrm{c}}}
\newcommand\cv{{\mathrm{cv}}}
\newcommand\hor{\text{\rm hor}}
\newcommand\hhor[1]{\text{\rm$#1$-hor}}
\newcommand\bas{\text{\rm bas}}
\newcommand\bbas[1]{\text{\rm$#1$-bas}}
\newcommand\CE{\text{\rm CE}}
\begin{document}


\title[Foliated Thom isomorphism]{A Thom isomorphism in foliated de
  Rham theory}

\author{Yi Lin}

\address{Department of Mathematical Sciences, Georgia Southern
  University, Statesboro, GA 30460 USA}

\email{yilin@georgiasouthern.edu}

\author{Reyer Sjamaar}

\address{Department of Mathematics, Cornell University, Ithaca, NY
14853-4201, USA}

\email{sjamaar@math.cornell.edu}

\subjclass[2010]{57R30 (57R91 58A12)}

\date\today

\dedicatory{In memory of Hans Duistermaat}


\begin{abstract}
We prove a Thom isomorphism theorem for differential forms in the
setting of transverse Lie algebra actions on foliated manifolds and
foliated vector bundles.
\end{abstract}


\maketitle

\tableofcontents


\section{Introduction}\label{section;introduction}

Any multiplicative cohomology theory has a Thom isomorphism theorem,
which relates the cohomology of the Thom space of an oriented vector
bundle to the cohomology of its base.  The Thom isomorphism theorem
for de Rham cohomology theory is well known and can be found for
instance in the textbooks~\cite[\S\,6]{bott-tu;differential-forms}
and~\cite[Ch.~I.IX]%
{greub-halperin-vanstone;connections-curvature-cohomology}.  We
present an alternative proof of the Thom isomorphism for de Rham
cohomology, which as far as we know was first given by Paradan and
Vergne in their unpublished
preprint~\cite[\S\,4]{paradan-vergne;thom-chern}.  Their proof, which
is an adaptation of Atiyah's elegant proof of Bott
periodicity~\cite{atiyah;bott-periodicity-index}, offers some
advantages: it is quite short, and it leads to an explicit homotopy
equivalence between de Rham complexes, which is useful in extending
the Thom isomorphism theorem in new directions.

As an application we establish a Thom isomorphism theorem for the
``equivariant basic'' differential forms on a foliated vector bundle
(Theorem~\ref{theorem;equivariant-basic-thom}).  These differential
forms are basic with respect to foliations on the base manifold and
the total space of a vector bundle, and they are equivariant with
respect to a ``transverse'' action of a Lie algebra on these
foliations.  This theorem is motivated by results of
T\"oben~\cite{toeben;localization-basic-characteristic} and Goertsches
et al.~\cite{goertsches-nozawa-toeben;chern-simons-foliations}, and
finds an application in our paper~\cite{lin-sjamaar;riemannian} on
cohomological localization formulas for transverse Lie algebra actions
on Riemannian foliations.  (A Thom isomorphism is also available for
the equivariant version of the Crainic-Moerdijk \v{C}ech-de Rham
complex~\cite[\S\,7]{moerdijk;groupoids-gerbes-cohomology},
\cite[\S\,2]{crainic-moerdijk;cech-de-rham-foliations}, but we have
not pursued this here.)

Constructing a Thom form in this setting turns out to be not always
possible, and necessitates a rather long excursion into the
Cartan-Chern-Weil theory of equivariant characteristic forms, which we
have placed in Appendix~\ref{section;cartan-chern-weil}.  In the
geometric cases of interest to us, where the vector bundle is the
normal bundle of a submanifold of a manifold equipped with a
Riemannian foliation, an equivariant basic Thom form exists, as we
show in \S\,\ref{section;foliated-thom-gysin}.  A consequence of this
fact is the existence of ``wrong-way'' homomorphisms and a long exact
Thom-Gysin sequence in equivariant basic de Rham theory, as we also
discuss in~\S\,\ref{section;foliated-thom-gysin}.

The authors are grateful to Xiaojun Chen and Bin Zhang for their
hospitality at the School of Mathematics of Sichuan University and to
the referee for helpful comments.

\section{Preliminaries}\label{section;preliminary}

Throughout this paper $M$ denotes a paracompact smooth manifold and
$\pi\colon E\to M$ a real vector bundle of rank $r$ with zero section
$\zeta\colon M\to E$.  For simplicity we assume the vector bundle $E$
to be oriented, noting only that the non-orientable case of the Thom
isomorphism can be handled by using forms with coefficients in the
orientation bundle as in~\cite[\S\,7]{bott-tu;differential-forms}.  A
subset $A$ of $E$ is \emph{vertically compact} if the restriction
$\pi|_A\colon A\to M$ is a proper map.  We denote the family of all
vertically compact subsets by ``$\cv$'' and say that a differential
form on $E$ is \emph{vertically compactly supported} if its support is
a member of $\cv$.

We denote the translation functor on cochain complexes by $[k]$.  So
if $(C,d)$ is a cochain complex we have $C[k]^i=C^{i+k}$ and
$d[k]=(-1)^kd$.  The \emph{algebraic mapping cone} of a morphism of
complexes $\phi\colon C\to D$ is the complex $C(\phi)$ with
$C(\phi)^i=C^i\oplus D^{i-1}$ and differential
$d(x,y)=(dx,\phi(x)-dy)$.  We have an obvious short exact sequence
$D[-1]\inj C(\phi)\sur C$; the connecting homomorphism in the
associated long exact sequence in cohomology is the map $H^i(C)\to
H^i(D)$ induced by $\phi$.  We say $\phi$ is a
\emph{quasi-isomorphism} if it induces an isomorphism in cohomology;
in other words if its mapping cone is acyclic.  A smooth map $f\colon
X\to Y$ induces a map of de Rham complexes
$f^*\colon\Omega(Y)\to\Omega(X)$, the mapping cone of which we denote
by $\Omega(f)$.  If $f$ is the inclusion of a smooth submanifold, we
call $\Omega(f)$ the \emph{relative de Rham complex} and denote it by
$\Omega(Y,X)$.

\section{The Thom isomorphism}\label{section;thom}

In this expository section we review the proof of the Thom isomorphism
theorem for differential forms on vector bundles over manifolds given
by Paradan and Vergne~\cite[\S\,4]{paradan-vergne;thom-chern}, which
is a translation into de Rham theory of Atiyah's proof of Bott
periodicity~\cite{atiyah;bott-periodicity-index}.  The trick is to
take the direct sum of two copies of the vector bundle and to exploit
the additional symmetries (the interchange map and the rotation)
produced in this way.  Given the existence of a Thom form, the Thom
isomorphism (Theorem~\ref{theorem;thom}) then follows quickly from
standard properties of differential forms.  The proof yields an
explicit, though rather long formula for a homotopy equivalence
between the two de Rham complexes, which will be of use in the next
section.  Other uses of the Paradan-Vergne idea can be found
in~\cite{fujisawa;thom-cech-de-rham}.

It was Chern~\cite{chern;gauss-bonnet} who first noticed that the Thom
form has a natural primitive defined on the complement of the zero
section, which is now called a \emph{transgression form}; see also
Mathai and Quillen~\cite[\S\,7]{mathai-quillen;thom-equivariant}.  The
Thom isomorphism can be expressed either in terms of vertically
compactly supported forms on the vector bundle $E$, or in terms of
relative differential forms on $E$ modulo the complement of the zero
section.  It is well known, but appears not to be recorded in the
literature, that these two pictures are isomorphic.  We give a proof
of this fact (Proposition~\ref{proposition;transgression}) that is
based on the notion of transgression.

\subsection{The Thom isomorphism}

The vertically compactly supported differential forms on $E$
constitute a subcomplex $\Omega_\cv(E)$ of the de Rham complex
$\Omega(E)$, and we write its cohomology as $H_\cv(E)$.  By assumption
$E$ is orientable, so every $k$-form $\beta$ in $\Omega_\cv(E)$ can be
integrated over the fibres of $E$ (see Appendix~\ref{section;fibre})
and the result is a $k-r$-form on $M$ denoted by $\pi_*\beta$.  The
projection formula~\eqref{equation;projection} shows that the fibre
integration map $\pi_*\colon\Omega_\cv(E)[r]\to\Omega(M)$ is a
morphism of graded left $\Omega(M)$-modules.  The map in cohomology
induced by $\pi_*$ is also denoted by $\pi_*$.  A \emph{Thom form} of
$E$ is an $r$-form $\tau\in\Omega_\cv^r(E)$ which satisfies
$\pi_*\tau=1$ and $d\tau=0$.  It is well known that Thom forms exist.
An explicit construction is given
in~\cite{mathai-quillen;thom-equivariant}; see also
\S\,\ref{section;foliated-thom} below.

\begin{theorem}\label{theorem;thom}
Let $E$ be an oriented vector bundle of rank $r$ over $M$.
\begin{enumerate}
\item\label{item;homotopy-equivalence}
Fibre integration $\pi_*\colon\Omega_\cv(E)[r]\longto\Omega(M)$ is a
homotopy equivalence.  A homotopy inverse of $\pi_*$ is the Thom map
\[
\zeta_*\colon\Omega(M)\longto\Omega_\cv(E)[r]
\]
defined by $\zeta_*(\alpha)=\tau\wedge\pi^*\alpha$, where
$\tau\in\Omega_\cv^r(E)$ is a Thom form of $E$.  A homotopy
$\zeta_*\pi_*\simeq\id$ is given by~\eqref{equation;homotopy-formula}
below.
\item\label{item;thom-class}
All Thom forms of $E$ are cohomologous.  Their cohomology class
$\Thom(E)\in H_\cv^r(E)$ is uniquely determined by the property that
$\pi_*(\Thom(E))=1$.
\item\label{item;free-module}
$H_\cv(E)$ is a free $H(M)$-module of rank $1$ generated by the Thom
  class $\Thom(E)$.
\end{enumerate}
\end{theorem}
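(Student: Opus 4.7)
The plan is to follow the Paradan–Vergne approach indicated in the introduction, combining the functorial properties of fibre integration with the rotation symmetry of the Whitney sum $E\oplus E$.

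I would first dispatch the easy half of part~(\ref{item;homotopy-equivalence}): the identity $\pi_*\zeta_*=\id$ holds on $\Omega(M)$ on the nose, not merely up to homotopy, because the projection formula~\eqref{equation;projection} gives $\pi_*(\tau\wedge\pi^*\alpha)=(\pi_*\tau)\,\alpha=\alpha$. The heart of the proof is the cochain homotopy $\zeta_*\pi_*\simeq\id$ on $\Omega_\cv(E)$. For this I would form $E\oplus E$ with projections $p_1,p_2\colon E\oplus E\to E$ and, for each $\beta\in\Omega_\cv(E)$, consider the two forms $\omega_\beta=p_1^*\tau\wedge p_2^*\beta$ and $\omega'_\beta=p_1^*\beta\wedge p_2^*\tau$ on $E\oplus E$, both vertically compactly supported over $M$. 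A further application of the projection formula along the $p_1$-fibres yields $(p_1)_*\omega_\beta=\tau\wedge\pi^*\pi_*\beta=\zeta_*\pi_*\beta$ and $(p_1)_*\omega'_\beta=\beta$. On the other hand $\omega_\beta$ and $\omega'_\beta$ are interchanged, up to a computable graded sign, by pullback along the fibrewise swap $\sigma\colon(v,w)\mapsto(w,v)$.

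To connect $\id$ and $\sigma$ at the level of cochain complexes I would use the fibrewise rotation $R_\theta(v,w)=(\cos\theta\cdot v-\sin\theta\cdot w,\sin\theta\cdot v+\cos\theta\cdot w)$, which at $\theta=0$ is the identity and at $\theta=\pi/2$ agrees with $\sigma$ up to a sign change on one summand. The standard Cartan homotopy formula, integrated over $\theta\in[0,\pi/2]$, produces an explicit operator $h\omega=\int_0^{\pi/2}R_\theta^*\iota_{X_\theta}\omega\,d\theta$ on $\Omega(E\oplus E)$ with $R_{\pi/2}^*-\id^*=dh+hd$, where $X_\theta$ is the rotation vector field. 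Since $R_\theta$ is a bundle automorphism preserving vertical compact support over $M$, applying $(p_1)_*$ to this homotopy evaluated on $\omega_\beta$ converts it into a cochain homotopy on $\Omega_\cv(E)$ between $\zeta_*\pi_*$ and $\id$; this gives the explicit formula~\eqref{equation;homotopy-formula}. The principal obstacle is careful bookkeeping: one must track the signs introduced by the degree shift $[r]$, by graded commutation of $\tau$ with $\beta$, and by the sign discrepancy between $R_{\pi/2}$ and $\sigma$, and must verify that fibre integration commutes with $d$ with the correct sign so that the Cartan formula descends to a genuine chain homotopy after integrating out one copy of $E$.

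Parts~(\ref{item;thom-class}) and~(\ref{item;free-module}) then follow formally from~(\ref{item;homotopy-equivalence}). For~(\ref{item;thom-class}), $\pi_*$ is an isomorphism on cohomology and every Thom form $\tau$ satisfies $\pi_*[\tau]=1$, so all Thom forms represent a common class $\Thom(E)$, which is characterized by $\pi_*\Thom(E)=1$. For~(\ref{item;free-module}), the Thom map $\zeta_*$ is a morphism of graded $\Omega(M)$-modules by yet another application of the projection formula, so the induced map $H(M)\to H_\cv(E)[r]$, $\alpha\mapsto\Thom(E)\cdot\pi^*\alpha$, is an $H(M)$-module homomorphism; it is an isomorphism by~(\ref{item;homotopy-equivalence}), exhibiting $H_\cv(E)$ as a free $H(M)$-module of rank one on the generator $\Thom(E)$.
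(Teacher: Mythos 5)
Your proposal is correct and takes essentially the same route as the paper: $\pi_*\zeta_*=\id$ from the projection formula, and the homotopy $\zeta_*\pi_*\simeq\id$ built on $E\oplus E$ by comparing the two products of $\tau$ and $\beta$ under the fibrewise swap, factoring the swap as a reflection composed with a quarter-turn that is joined to the identity by the rotation homotopy, and then pushing forward along $p_1$. The sign bookkeeping you defer (e.g.\ $p_{1,*}\sigma^*=(-1)^rp_{1,*}$ for the reflection, and the sign in $p_{1,*}(p_1^*\tau\wedge p_2^*\beta)$) is exactly what the paper's computation carries out explicitly, and parts (\ref{item;thom-class}) and (\ref{item;free-module}) are deduced formally just as you indicate.
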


\begin{proof}
\eqref{item;homotopy-equivalence}~Let $\tau$ be a Thom form of $E$.
Then $\pi_*\tau=1\in\Omega^0(M)$, so by the projection formula
\[
\pi_*\zeta_*(\alpha)=\pi_*(\tau\wedge\pi^*\alpha)=
\pi_*\tau\wedge\alpha=\alpha
\]
for all $\alpha\in\Omega(M)$.  This shows that $\pi_*\zeta_*=\id$.
Our next task is to find a cochain homotopy $\kappa$ of the complex
$\Omega_\cv(E)$ with the property
\begin{equation}\label{equation;homotopy}
\zeta_*\pi_*-{\id}=d\kappa+\kappa d.
\end{equation}
Let $\beta\in\Omega_\cv(E)$.  Then
$\zeta_*\pi_*(\beta)=\tau\wedge\pi^*\pi_*(\beta)$.  To rewrite this
expression we introduce the direct sum bundle $E\oplus E$, which has
two projection maps $p_1$, $p_2\colon E\oplus E\rightrightarrows E$
that make up a pullback diagram
\[
\begin{tikzcd}[row sep=large]
E\oplus E\ar[r,"p_2"]\ar[d,"p_1"']&E\ar[d,"\pi"]
\\
E\ar[r,"\pi"]&M.
\end{tikzcd}
\]
The pullback property (Lemma~\ref{lemma;fibre}\eqref{item;pullback})
and the projection formula give
\[
\zeta_*\pi_*(\beta)=\tau\wedge\pi^*\pi_*(\beta)=\tau\wedge
p_{1,*}p_2^*(\beta)=(-1)^rp_{1,*}(p_1^*\tau\wedge p_2^*\beta).
\]
Write elements of $E\oplus E$ as pairs $(a,b)$ and let
$\phi(a,b)=(b,a)$ be the automorphism that switches the two copies of
$E$.  Then $p_1\phi=p_2$ and $p_2\phi=p_1$, so
\begin{equation}\label{equation;fibre-thom}
\zeta_*\pi_*(\beta)=(-1)^rp_{1,*}(p_1^*\tau\wedge p_2^*\beta)=
(-1)^rp_{1,*}\phi^*(p_2^*\tau\wedge p_1^*\beta).
\end{equation}
The form $p_1^*\beta\in\Omega(E\oplus E)$ is not vertically compactly
supported with respect to the bundle projection $E\oplus E\to M$, but
the product $p_2^*\tau\wedge p_1^*\beta$ is, and therefore
\[
p_2^*\tau\wedge p_1^*\beta\in\Omega_\cv(E\oplus E).
\]
The automorphism $\phi$ of $E\oplus E$ is the composition
$\phi=\varrho_1\circ\sigma$ of the quarter-turn map
$\varrho_1(a,b)=(-b,a)$ and the reflection $\sigma(a,b)=(a,-b)$.  The
map $\varrho_1$ is homotopic to the identity through the family of
rotations $\varrho\colon[0,1]\times(E\oplus E)\to E\oplus E$ defined
by
\[
\textstyle\varrho(t,a,b)=\bigl(\cos\bigl(\frac12\pi
t\bigl)a-\sin\bigl(\frac12\pi t\bigr)b,\sin\bigl(\frac12\pi
t\bigr)a+\cos\bigl(\frac12\pi t\bigr)b\bigr).
\]
By Corollary~\ref{corollary;homotopy} this homotopy induces a cochain
homotopy $p_*\varrho^*$ of the complex $\Omega(E\oplus E)$,
\begin{equation}\label{equation;rotation}
\varrho_1^*-{\id}=dp_*\varrho^*+p_*\varrho^*d,
\end{equation}
where $p\colon[0,1]\times(E\oplus E)\to E\oplus E$ is the projection.
The homotopy $p_*\varrho^*$ preserves forms that are vertically
compactly supported with respect to the bundle projection $E\oplus
E\to M$, so~\eqref{equation;rotation} is valid as a homotopy of the
complex $\Omega_\cv(E\oplus E)$.  Since $\phi=\varrho_1\circ\sigma$,
this yields
\[\phi^*=\sigma^*({\id}+dp_*\varrho^*+p_*\varrho^*d).\]
We substitute this formula into~\eqref{equation;fibre-thom},
\begin{align*}
\zeta_*\pi_*(\beta)&=
(-1)^rp_{1,*}\sigma^*({\id}+dp_*\varrho^*+p_*\varrho^*d)(p_2^*\tau\wedge
p_1^*\beta)\\
&=p_{1,*}({\id}+dp_*\varrho^*+p_*\varrho^*d)(p_2^*\tau\wedge
p_1^*\beta)\\
&=\pi^*\pi_*(\tau)\wedge\beta+(p_{1,*}dp_*\varrho^*+p_{1,*}p_*\varrho^*d)
(p_2^*\tau\wedge p_1^*\beta)\\
&=\beta+(-1)^r\bigl(dp_{1,*}p_*\varrho^*l(p_2^*\tau)p_1^*+
p_{1,*}p_*\varrho^*l(p_2^*\tau)p_1^*d\bigr)(\beta)\\
&=\beta+(d\kappa+\kappa d)(\beta),
\end{align*}
and so we see that~\eqref{equation;homotopy} holds.  Here we used the
fact that $p_{1,*}\sigma^*=(-1)^rp_{1,*}$ (since $\sigma$ reverses the
fibres of $p_1$), the projection formula, the fact that
$[p_{1,*},d]=0$ (Lemma~\ref{lemma;fibre}\eqref{item;stokes}), and the
fact that $d\tau=0$.  The notation $l(\gamma)$ means left
multiplication by a form $\gamma$, and the formula for the homotopy is
\begin{equation}\label{equation;homotopy-formula}
\kappa=(-1)^rp_{1,*}\circ p_*\circ\varrho^*\circ l(p_2^*\tau)\circ
p_1^*.
\end{equation}

\eqref{item;thom-class}~This assertion follows immediately from
\eqref{item;homotopy-equivalence}.

\eqref{item;free-module}~It follows from
\eqref{item;homotopy-equivalence} that the Thom map in cohomology
$\zeta_*\colon H(M)\to H_\cv(E)[r]$ is an isomorphism of
$H^*(M)$-modules.  Hence $H_\cv(E)[r]$ is free on the single generator
$\zeta_*(1)=[\tau]$.
\end{proof}  

\subsection{Transgression and relative forms}

The \emph{punctured vector bundle} is $E^\times=E\backslash\zeta(M)$.
Define a projection $p$ and a homotopy $h$
\[
\begin{tikzcd}
E^\times&\ar[l,"p"'][1,\infty)\times E^\times\ar[r,"h"]&E
\end{tikzcd}
\]
by $p(t,v)=t$ and $h(t,v)=tv$.  For every vertically compact subset
$A$ of $E$ the map $p\colon h^{-1}(A)\to E^\times$ is proper; in other
words the subset $h^{-1}(A)$ of $[1,\infty)\times E^\times$ is
  vertically compact for $p$.  Therefore we have a well-defined map
\[\phi=p_*h^*\colon\Omega_\cv(E)\longto\Omega(E^\times)[-1],\]
which we call the \emph{transgression map}.
Lemma~\ref{lemma;fibre}~\eqref{item;stokes} shows that
$[d,\phi]=h_1^*$.  Since $h_1$ is the inclusion $E^\times\to E$, we
obtain the \emph{transgression formula}
\begin{equation}\label{equation;transgression}
\beta|_{E^\times}=d\phi(\beta)+\phi(d\beta)
\end{equation}
for all $\beta\in\Omega_\cv(E)$.  In particular, if $\beta$ is closed
(for instance the Thom form), its restriction to $E^\times$ has a
primitive $\phi(\beta)$.  (If we regard the map $h$ as a homotopy
$[0,\infty)\times E^\times\to E$, as
  in~\cite[\S\,7]{mathai-quillen;thom-equivariant}, we obtain a
  similar formula for $h_0^*\beta=(\pi^*\zeta^*\beta)|_{E^\times}$.)

The following proposition is a folklore result, which provides several
alternative models for vertically compactly supported cohomology.
Recall (see~\S\,\ref{section;preliminary}) that the relative de Rham
complex $\Omega(E,E^\times)$ is the mapping cone of the restriction
map $\Omega(E)\to\Omega(E^\times)$.  The transgression
formula~\eqref{equation;transgression} can be interpreted as saying
that the map
\[\psi\colon\Omega_\cv(E)\longto\Omega(E,E^\times)\]
defined by $\psi(\beta)=(\beta,\phi(\beta))$ is a morphism of
complexes.  The result asserts that $\psi$ induces an isomorphism in
cohomology.  There are two other complexes quasi-isomorphic to
$\Omega_\cv(E)$, the definition of which involves the choice of a
Riemannian fibre metric on $E$.  Let $BE$ be the unit disc bundle and
$SE=\partial(BE)$ the unit sphere bundle with respect to this metric.
In order not to overload the notation, we will use $j$ for any of the
four inclusion maps
\[
BE\longinj E,\quad SE\longinj
E^\times,\quad(BE,SE)\longinj(E,E^\times),\quad
SE\longinj\overline{E\backslash BE},
\]
where $\overline{E\backslash BE}$ denotes the closure of $E\backslash
BE$ in $E$.  Then we have a restriction map
$j^*\colon\Omega(E,E^\times)\to\Omega(BE,SE)$.  Let $\Omega_{BE}(E)$
be the collection of all differential forms on $E$ which have support
contained in $BE$.  Then $\Omega_{BE}(E)$ is a subcomplex of
$\Omega_\cv(E)$ and we denote the inclusion map by
$j_*\colon\Omega_{BE}(E)\inj\Omega_\cv(E)$.

\begin{proposition}\label{proposition;transgression}
The morphisms
\[
\begin{tikzcd}
\Omega_{BE}(E)\ar[r,"j_*"]& \Omega_\cv(E)\ar[r,"\psi"]&
\Omega(E,E^\times)\ar[r,"j^*"]&\Omega(BE,SE)
\end{tikzcd}
\]
are quasi-isomorphisms.
\end{proposition}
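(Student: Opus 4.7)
The plan is to prove that each of the three morphisms is a quasi-isomorphism, treating them in order of increasing difficulty: $j^*$, then $j_*$, then $\psi$.

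For $j^*$: both $\Omega(E,E^\times)$ and $\Omega(BE,SE)$ are mapping cones of restriction maps, and $j^*$ is induced by the pair of restrictions $\Omega(E)\to\Omega(BE)$ and $\Omega(E^\times)\to\Omega(SE)$. Each of these is a quasi-isomorphism because $BE\hookrightarrow E$ and $SE\hookrightarrow E^\times$ are homotopy equivalences via the obvious radial deformation retracts ($v\mapsto v/\max(1,|v|)$, smoothed, and $v\mapsto v/|v|$, respectively). The $5$-lemma applied to the long exact sequences of the two mapping cones then yields the claim.

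For $j_*$: I would use a rescaling argument. For $R>1$ the diffeomorphism $\mu_R(v)=Rv$ of $E$ is isotopic to the identity via the scaling family $\mu_s$, $s\in[1,R]$. Applying Corollary~\ref{corollary;homotopy} to the smooth homotopy $H(t,v)=(1+t(R-1))v$ produces a cochain homotopy $k=p_*H^*$ satisfying $\mu_R^*-\id=dk+kd$. Because $H(t,v)\in BE$ implies $v\in BE$ (and similarly with $B_RE$ in place of $BE$), the operator $k$ preserves each of the subcomplexes $\Omega_{BE}(E)$ and $\Omega_\cv(E)$, so $\mu_R^*\simeq\id$ on both. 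Given any closed $\omega\in\Omega_\cv(E)$ with support in $B_RE$, the cohomologous form $\mu_R^*\omega$ lies in $\Omega_{BE}(E)$, yielding surjectivity of $j_*$ on cohomology. For injectivity, if $\omega\in\Omega_{BE}(E)$ is closed and $\omega=d\gamma$ with $\gamma\in\Omega_{B_RE}(E)$, then $\mu_R^*\omega=d(\mu_R^*\gamma)$ with $\mu_R^*\gamma\in\Omega_{BE}(E)$, and the homotopy identity $[\omega]=[\mu_R^*\omega]$ in $H(\Omega_{BE}(E))$ forces $[\omega]=0$.

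For $\psi$: having established that $j_*$ and $j^*$ are quasi-isomorphisms, it suffices to show the composition $j^*\circ\psi\circ j_*\colon\Omega_{BE}(E)\to\Omega(BE,SE)$ is a quasi-isomorphism. A direct calculation gives $(j^*\psi j_*)(\omega)=(\omega|_{BE},0)$: for $\omega\in\Omega_{BE}(E)$ and $v\in SE$, the ray $\{tv:t\ge 1\}$ is contained in $E\setminus B^\circ E$, on which $\omega$ vanishes to infinite order, so the transgression integrand $h^*\omega$ is identically zero on $[1,\infty)\times SE$ and thus $\phi(\omega)|_{SE}=0$. That $\omega\mapsto(\omega|_{BE},0)$ is a quasi-isomorphism then follows from the standard principle that $H(BE,SE)$ can be computed using forms that vanish to infinite order on $SE$: every relative cocycle $(\alpha,\beta)\in\Omega(BE,SE)$ is cohomologous to $(\alpha-d\tilde\beta,0)$ for any smooth extension $\tilde\beta$ of $\beta$ to $BE$, and a further cut-off in a collar of $SE$ replaces this by a representative that is flat at the boundary. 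The main obstacle is precisely this final flatness step: upgrading a representative that merely vanishes on $SE$ to one that vanishes to infinite order, so that it lies in the image of the extension-by-zero map from $\Omega_{BE}(E)$, requires a careful collar-neighborhood flattening and is the technical heart of the argument.
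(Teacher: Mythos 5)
Your reduction of $\psi$ to the composite $j^*\circ\psi\circ j_*$, and your computation that this composite sends $\omega$ to $(\omega|_{BE},0)$ because the transgression integrand vanishes along rays through $SE$, are both correct and agree with the paper. But the step you then defer --- upgrading a relative cocycle $(\alpha-d\tilde\beta,0)$, whose first entry merely pulls back to zero on $SE$, to one that vanishes to infinite order along $SE$ so that it extends by zero to an element of $\Omega_{BE}(E)$ --- is, as you yourself say, the technical heart, and you do not carry it out; nor do you address injectivity of $H(\Omega_{BE}(E))\to H(BE,SE)$ at all, which needs a separate argument of the same kind. So the proof of the middle arrow is incomplete. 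The paper avoids boundary flattening entirely: it compares the long exact sequences of the two short exact sequences $\Omega_{BE}(E)\inj\Omega(E)\sur\Omega\bigl(\overline{E\backslash BE}\bigr)$ and $\Omega(SE)[-1]\inj\Omega(BE,SE)\sur\Omega(BE)$ via $j^*$, and applies the five lemma using the homotopy equivalences $BE\simeq E$ and $SE\simeq\overline{E\backslash BE}$. Adopting that comparison would close this gap cleanly. (Your treatment of the third arrow $j^*$ is exactly the paper's and is fine.)

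Your rescaling argument for $j_*$ has a smaller but genuine defect: over a noncompact base $M$ a vertically compact set need not lie in any disc bundle $B_RE$ of \emph{constant} radius $R$ (the fibrewise radius of $\supp\omega$ may grow without bound while $\pi|_{\supp\omega}$ remains proper), so the hypothesis ``closed $\omega\in\Omega_\cv(E)$ with support in $B_RE$'' does not cover all of $\Omega_\cv(E)$, and the same problem recurs for the primitive $\gamma$ in your injectivity step. This is repairable: rescale by a smooth function $R(x)\ge1$ on $M$ dominating the fibrewise support radius, in which case the homotopy operator still preserves $\Omega_{BE}(E)$ and $\Omega_\cv(E)$ by the same support estimate; but as written the argument only works for compact $M$. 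The paper instead gets $j_*$ for free from the factorization $\zeta_*=j_*\circ\zeta_*^B$, where $\zeta_*^B$ is the Thom map built from a Thom form supported in $BE$; it shows $\zeta_*^B$ is a homotopy equivalence onto $\Omega_{BE}(E)$ after correcting the homotopy operator $\kappa$ (whose output is only supported in $B_{\sqrt2}E$) by a radial retraction into $BE$ --- a fix in the same spirit as your rescaling, but applied where the radius really is uniformly bounded.
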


\begin{proof}
There exists a Thom form $\tau$ which is supported on $BE$, as shown
for instance in \S\,\ref{section;foliated-thom} below.  With such a
choice of $\tau$ we have
$\zeta_*(\alpha)=\tau\wedge\pi^*\alpha\in\Omega_{BE}(E)$ for all
$\alpha\in\Omega(M)$.  Let us write $\zeta_*^B$ for $\zeta_*$ viewed
as a map from $\Omega(M)$ into $\Omega_{BE}(E)$.  Then
$\zeta_*=j_*\circ\zeta_*^B$.  Theorem~\ref{theorem;thom} says that
$\zeta_*$ is a quasi-isomorphism, so to prove the proposition it
suffices to show that the maps
\begin{align}
\label{item;thom-ball}
\zeta_*^B\colon\Omega(M)[-r]&\longto\Omega_{BE}(E),\\
\label{item;ball-sphere}
j^*\circ\psi\circ j_*\colon\Omega_{BE}(E)&\longto\Omega(BE,SE),\\
\label{item;retract}
j^*\colon\Omega(E,E^\times)&\longto\Omega(BE,SE)
\end{align}
are quasi-isomorphisms.  The map $\zeta_*^B$ has a left inverse
$\pi_*^B$, defined by integration over the fibres of the projection
$\pi^B\colon BE\to M$.  We also have the formula
\begin{equation}\label{equation;ball-homotopy-formula}
\zeta_*^B\pi_*^B(\beta)-\beta=(d\kappa+\kappa d)(\beta)
\end{equation}
for all $\beta\in\Omega_{BE}(E)$, where $\kappa$ is as
in~\eqref{equation;homotopy-formula}.  But the support of
$\kappa(\beta)\in\Omega(E)$ may not be contained in $BE$,
so~\eqref{equation;ball-homotopy-formula} is not valid as a homotopy
of the complex $\Omega_{BE}(E)$.  Indeed, the support of
$p_2^*\tau\wedge p_1^*\beta$ is contained in the ``bi-disc'' bundle
$BE\times_EBE$, so we see from~\eqref{equation;homotopy-formula} that
the support of $\kappa(\beta)$ is a subset of
$p_1p\varrho^{-1}(BE\times_EBE)$, which is contained in the disc
bundle $B_RE$ of radius $R=\sqrt2$.  (Here the maps $p_1$, $p_2$,
$\rho$, and $p$ are as in the proof of Theorem~\ref{theorem;thom}.)
We fix this by retracting the disc bundle $B_RE$ into the disc bundle
$BE$: we have a homotopy $\bar{h}$ and a projection $\bar{p}$
\[
\begin{tikzcd}
E&\ar[l,"\bar{p}"'][1,R]\times E\ar[r,"\bar{h}"]&E
\end{tikzcd}
\]
defined by $h(t,v)=tv$ and $p(t,v)=t$.  By
Lemma~\ref{lemma;fibre}~\eqref{item;stokes} this homotopy gives rise
to a homotopy formula on the de Rham complex $\Omega(E)$, namely
$h_R^*-\id_E^*=[\lambda,d]$, where $\lambda=\bar{p}_*\circ\bar{h}^*$.
Combining this with~\eqref{equation;ball-homotopy-formula} gives
\begin{equation}\label{equation;shrunk-ball-homotopy-formula}
\zeta_*^B\pi_*^B(\beta)-\beta=(d\mu+\mu d)(\beta)
\end{equation}
for all $\beta\in\Omega_{BE}(E)$, where
$\mu=h_R^*\circ\kappa+\lambda\circ(\zeta_*^B\pi_*^B-id_E^*)$.  From
$\supp(\beta)\subseteq BE$ we get $\supp(h_R^*\kappa(\beta))\subseteq
h_R^{-1}(B_RE)=BE$ and
$\supp(\lambda(\beta))\subseteq\bar{p}\bar{h}^{-1}(BE)=BE$, so
$\supp(\mu(\beta))\subseteq BE$.
So~\eqref{equation;shrunk-ball-homotopy-formula} shows that $\pi^B_*$
is a homotopy inverse of $\zeta^B_*$, and hence~\eqref{item;thom-ball}
is a quasi-isomorphism.  Next we show~\eqref{item;ball-sphere} is a
quasi-isomorphism.  Let $\beta\in\Omega_\cv(E)$ and let $A$ be the
support of $\beta$.  Then the support of $\phi(\beta)=p_*h^*(\beta)$
is contained in $p(h^{-1}(A))=\bigcup_{t\ge1}\frac1tA$.  It follows
that the transgression $\phi$ maps $\Omega_{BE}(E)$ to
$\Omega_{BE}(E)$.  In particular, if $\beta\in\Omega_{BE}(E)$, then
$\phi(\beta)=0$ on $SE$ and hence $j^*\psi
j_*(\beta)=j^*(\beta,\phi(\beta))=(\beta,0)$.  In other words the
map~\eqref{item;ball-sphere} is just the map induced by the inclusion
$j\colon BE\to E$.  The complexes $\Omega_{BE}(E)$ and $\Omega(BE,SE)$
fit into two short exact sequences
\begin{gather*}
\begin{tikzcd}[ampersand replacement=\&]
\Omega_{BE}(E)\ar[r,hook]\&\Omega(E)\ar[r,two heads]\&
\Omega\bigl(\overline{E\backslash BE}\bigr),
\end{tikzcd}
\\
\begin{tikzcd}[ampersand replacement=\&]
\Omega(SE)[-1]\ar[r,hook]\&\Omega(BE,SE)\ar[r,two heads]\&\Omega(BE).
\end{tikzcd}
\end{gather*}
We have two corresponding long exact cohomology sequences and a map
between them,
\[
\begin{tikzcd}[column sep=small]
\cdots\ar[r]&H^k_{BE}(E)\ar[r]\ar[d,"j^*"']&H^k(E)\ar[r]\ar[d,"j^*"']&
H^k\bigl(\overline{E\backslash
  BE}\bigr)\ar[r]\ar[d,"j^*"']&H^{k+1}_{BE}(E)\ar[r]\ar[d,"j^*"']&\cdots
\\
\cdots\ar[r]&H^k(BE,SE)\ar[r]&H^k(BE)\ar[r]&H^k(SE)\ar[r]&
H^{k+1}(BE,SE)\ar[r]&\cdots
\end{tikzcd}
\]
The inclusions $BE\to E$ and $SE\to\overline{E\backslash BE}$ are
homotopy equivalences, so by the homotopy lemma,
Corollary~\ref{corollary;homotopy}, they induce isomorphisms
$H^k(E)\cong H^k(BE)$ and $H^k\bigl(\overline{E\backslash
  BE}\bigr)\cong H^k(SE)$.  Hence $j^*\colon H^k_{BE}(E)\to
H^k(BE,SE)$ is also an isomorphism, i.e.~\eqref{item;ball-sphere} is a
quasi-isomorphism.  The proof that~\eqref{item;retract} is a
quasi-isomorphism is similar but easier, and uses the fact that the
inclusions $BE\to E$ and $SE\to E^\times$ are homotopy equivalences.
\end{proof}

\subsection{The Thom-Gysin sequences}

The Thom isomorphism theorem leads to two exact sequences,
Propositions~\ref{proposition;thom-gysin-vector}
and~\ref{proposition;thom-gysin-submanifold} below, each of which is
known as the \emph{Thom-Gysin sequence}.  If $\tau\in\Omega_\cv^r(E)$
is a Thom form of the bundle $E$, then
$\eta=\zeta^*\tau=\zeta^*\zeta_*1\in\Omega^r(M)$ is called the
\emph{Euler form} corresponding to $\tau$.  Its class
$\Eu(E)=[\eta]\in H^r(M)$ is the \emph{Euler class} of $E$.  Let $SE$
be the sphere bundle of $E$ with respect to some Riemannian fibre
metric on $E$ and let $\pi_{SE}=\pi|_{SE}\colon SE\to M$ be the
projection.

\begin{proposition}\label{proposition;thom-gysin-vector}
The sequence
\[
\begin{tikzcd}
\cdots\ar[r]&H^{k-r}(M)\ar[r,"\Eu(E)\cup"]&[2em]
H^k(M)\ar[r,"\pi_{SE}^*"]&H^k(SE)\ar[r,"\pi_{SE,*}"]&
H^{k-r+1}(M)\ar[r]&\cdots
\end{tikzcd}
\]
is exact.
\end{proposition}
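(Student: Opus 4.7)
The plan is to derive the Thom--Gysin sequence as the long exact cohomology sequence of an appropriate pair, then identify each of its three maps using the constructions of this section.

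First I would form the short exact sequence of cochain complexes
\[0\longto\Omega(E^\times)[-1]\longto\Omega(E,E^\times)\longto\Omega(E)\longto0\]
arising from the mapping cone of the restriction map $\Omega(E)\to\Omega(E^\times)$, as in \S\,\ref{section;preliminary}. Passing to the associated long exact cohomology sequence and recalling from \S\,\ref{section;preliminary} that the connecting homomorphism $H^k(E)\to H^k(E^\times)$ is restriction, I obtain
\[\cdots\to H^{k-1}(E^\times)\to H^k(E,E^\times)\to H^k(E)\to H^k(E^\times)\to H^{k+1}(E,E^\times)\to\cdots.\]

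Next I would substitute three isomorphisms. Since $\zeta$ is a homotopy inverse of $\pi$, the homotopy lemma yields $\pi^*\colon H(M)\cong H(E)$. Radial retraction shows the inclusion $SE\inj E^\times$ is a homotopy equivalence, giving $H(E^\times)\cong H(SE)$. Finally, combining Proposition~\ref{proposition;transgression} with Theorem~\ref{theorem;thom} produces $H^k(E,E^\times)\cong H^k_\cv(E)\cong H^{k-r}(M)$. Substituting these into the long exact sequence yields the sequence of the statement.

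It remains to identify the three maps. Functoriality of pullback identifies the middle arrow with $\pi_{SE}^*$, since the composition $SE\inj E^\times\inj E\xrightarrow{\pi}M$ equals $\pi_{SE}$. For the first arrow, I would trace $\alpha\in H^{k-r}(M)$ through the Thom map to $[\tau\wedge\pi^*\alpha]\in H^k_\cv(E)$, transport via $\psi$ into $H^k(E,E^\times)$, and forget into $H^k(E)$; pulling back by $\zeta$ (the inverse of $\pi^*$ in cohomology) then gives $\zeta^*\tau\wedge\alpha=\eta\wedge\alpha=\Eu(E)\cup\alpha$.

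The main obstacle will be identifying the connecting map $H^k(E^\times)\to H^{k+1}(E,E^\times)$ with $\pi_{SE,*}$, since $E^\times$ does not admit fibre integration directly. My strategy is to replace $(E,E^\times)$ with the quasi-isomorphic pair $(BE,SE)$ supplied by Proposition~\ref{proposition;transgression}, where fibre integration is defined on both pieces. Using Stokes' theorem for fibre integration (Lemma~\ref{lemma;fibre}), I would assemble a chain map $\Omega(BE,SE)\to\Omega(M)[-r]$ of the form $(\beta,\sigma)\mapsto\pi_*^B\beta\pm\pi_{SE,*}\sigma$, and verify that it realises the composite isomorphism $H^k(BE,SE)\cong H^k_\cv(E)\cong H^{k-r}(M)$. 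The connecting class of $[\sigma]\in H^k(SE)$ in $H^{k+1}(BE,SE)$ is represented by $(0,\pm\sigma)$, which the chain map sends to $\pm\pi_{SE,*}\sigma$, completing the identification.
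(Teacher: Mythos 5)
Your argument is correct and follows essentially the same route as the paper: both derive the sequence from the long exact cohomology sequence of a pair modelling $H_\cv(E)$, substitute the isomorphisms supplied by Theorem~\ref{theorem;thom} and Proposition~\ref{proposition;transgression}, and identify the boundary map with $\pi_{SE,*}$ via the Stokes formula of Lemma~\ref{lemma;fibre}\eqref{item;stokes}. The only differences are cosmetic: the paper works with the pair $(BE,SE)$ from the outset and evaluates $\pi_{BE,*}$ on an explicit connecting cocycle, whereas you start from $(E,E^\times)$ and package the same computation as a chain-level inverse $(\beta,\sigma)\mapsto\pi_*^B\beta\pm\pi_{SE,*}\sigma$ of the Thom quasi-isomorphism.
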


\begin{proof}
Make the following substitutions into the long exact sequence for the
pair $(BE,SE)$:
\[
\begin{tikzcd}
\cdots\ar[r]&H^k(BE,SE)\ar[r]&H^k(BE)\ar[r,"i_{SE}^*"]&
H^k(SE)\ar[r,"\delta"]&H^{k+1}(BE,SE)\ar[r]&\cdots\\
&H^{k-r}(M)\ar[u,"\zeta_*","\cong"']&H^k(M)\ar[u,leftarrow,shift
  left=1.5ex,"\zeta^*","\cong"']\ar[u,shift right=1ex,"\pi_{BE}^*"']&&
H^{k-r+1}(M)\ar[u,leftarrow,"\pi_{BE,*}","\cong"']
\end{tikzcd}
\]
Here $\pi_{BE}\colon BE\to M$ is the projection and $i_{SE}\colon
SE\to BE$ is the inclusion.  The isomorphism $H^k(BE,SE)\cong
H^{k-r}(M)$ follows from Theorem~\ref{theorem;thom} and
Proposition~\ref{proposition;transgression} and the isomorphism
$H^k(BE)\cong H^k(M)$ follows from the fact that $\pi_{BE}\colon BE\to
M$ is a deformation retraction.  The map $H^{k-r}(M)\to H^k(M)$ is
given by
\[
\alpha\longmapsto\zeta^*\zeta_*\alpha=
\zeta^*(\tau\wedge\pi_{BE}^*\alpha)=\eta\wedge\alpha,
\]
and the map $H^k(M)\to H^k(SE)$ is
$i_{SE}^*\circ\pi_{BE}^*=\pi_{SE}^*$.  The map
$\pi_{BE,*}\circ\delta\colon H^k(SE)\to H^{k-r+1}(M)$ can be described
as follows.  Let $\lambda$ be a closed $k$-form on $SE$, extend it to
a form $\mu$ on $BE$; then $\nu=(d\mu,\lambda)$ is a cocycle in
$\Omega^{k+1}(BE,SE)$ and $\delta([\alpha])=[\nu]$.  Also
\[
\pi_{BE,*}\nu=\pi_{BE,*}d\mu=\pi_{SE,*}\lambda+(-1)^rd\pi_{BE,*}\mu,
\]
where the second equality follows from
Lemma~\ref{lemma;fibre}\eqref{item;stokes}.  We conclude that
$\pi_{BE,*}\delta[\lambda]=\pi_{SE,*}[\lambda]$.
\end{proof}

The de Rham complex has the following excision property.

\begin{lemma}\label{lemma;excision}
Let $X$ be a manifold and $i\colon Y\to X$ a closed submanifold with
normal bundle $N=i^*TX/TY$.  Choose a tubular neighbourhood embedding
$i_N\colon N\to X$.  Then $i_N^*\colon\Omega(X,X\backslash
Y)\to\Omega(N,N^\times)$ is a quasi-isomorphism.
\end{lemma}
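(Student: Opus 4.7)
Since a tubular-neighbourhood embedding is an open embedding, $i_N$ factors as a diffeomorphism $N\longiso U:=i_N(N)$ onto an open neighbourhood of $Y$ in $X$, followed by the open inclusion $j\colon U\inj X$. The diffeomorphism carries the zero section onto $Y$ and hence $N^\times$ onto $U\setminus Y$, so it induces an honest isomorphism of complexes $\Omega(U,U\setminus Y)\cong\Omega(N,N^\times)$. The claim thus reduces to showing $j^*\colon\Omega(X,X\setminus Y)\to\Omega(U,U\setminus Y)$ is a quasi-isomorphism, which is the de~Rham version of the excision axiom.

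\textbf{Main step.} I would deduce this from Mayer--Vietoris for the open cover $\{U,X\setminus Y\}$ of $X$. A subordinate partition of unity supplies the short exact sequence of complexes
\begin{equation*}
0\longto\Omega(X)\longto\Omega(U)\oplus\Omega(X\setminus Y)\longto\Omega(U\setminus Y)\longto 0,
\end{equation*}
whose rightmost map is the difference of the two restrictions. Regard the square of restriction maps
\begin{equation*}
\begin{tikzcd}[row sep=small]
\Omega(X)\ar[r]\ar[d,"j^*"']&\Omega(X\setminus Y)\ar[d]\\
\Omega(U)\ar[r]&\Omega(U\setminus Y)
\end{tikzcd}
\end{equation*}
as a bicomplex and let $T$ be its total complex. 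Taking horizontal cones first identifies $T$, up to the standard sign convention, with $C(j^*\colon\Omega(X,X\setminus Y)\to\Omega(U,U\setminus Y))$, whose acyclicity is our goal; taking vertical cones first identifies $T$ with $C(\Omega(j)\to\Omega(j'))$, where $j'\colon U\setminus Y\inj X\setminus Y$ and $\Omega(j),\Omega(j')$ are the vertical mapping cones. Thus $j^*$ is a quasi-isomorphism precisely when the induced morphism $\Omega(j)\to\Omega(j')$ is one.

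\textbf{Conclusion.} The Mayer--Vietoris sequence above exhibits $\Omega(X)$ as the homotopy fibre product of $\Omega(U)$ and $\Omega(X\setminus Y)$ over $\Omega(U\setminus Y)$. A general property of such homotopy pullback squares is that the homotopy fibres of opposite legs agree; here this says exactly that $\Omega(j)\to\Omega(j')$ is a quasi-isomorphism. Concretely, one extracts this by applying the Five Lemma to the Mayer--Vietoris long exact sequence and the two mapping-cone long exact sequences for $\Omega(j)$ and $\Omega(j')$. The principal technical obstacle is the sign-and-shift bookkeeping for the two filtrations on $T$, which is routine but finicky; as an alternative, one could dispense with bicomplexes and directly apply the Five Lemma to the relative de~Rham long exact sequences of the pairs $(X,X\setminus Y)$ and $(U,U\setminus Y)$, supplemented by Mayer--Vietoris to handle the comparison of the intervening absolute restrictions.
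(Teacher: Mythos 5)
Your main argument is correct and is essentially the paper's: both rest on Mayer--Vietoris for the open cover $\{i_N(N),X\setminus Y\}$ plus standard mapping-cone algebra (the paper forms the cone of the morphism between the two Mayer--Vietoris sequences, obtaining a short exact sequence of relative complexes in which the terms $\Omega(Z,Z)$ and $\Omega(N^\times,N^\times)$ are visibly acyclic --- the same ``opposite cones in a homotopy pullback square agree'' principle you invoke). One caution about your closing alternative: applying the Five Lemma directly to the long exact sequences of the pairs $(X,X\setminus Y)$ and $(U,U\setminus Y)$ cannot work, because the restriction maps $H^k(X)\to H^k(U)$ and $H^k(X\setminus Y)\to H^k(U\setminus Y)$ are not isomorphisms in general (the tubular neighbourhood $U$ retracts onto $Y$), so the Five Lemma's hypotheses fail no matter how one ``supplements'' them; the bicomplex/homotopy-pullback version is the one to keep.
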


\begin{proof}
Put $Z=X\backslash Y$ and let $\{U,V\}$ be an open cover of $X$.  We
have two short exact Mayer-Vietoris sequences of complexes
\begin{gather*}
\Omega(X)\longinj\Omega(U)\oplus\Omega(V)\longsur\Omega(U\cap V),\\
\Omega(Z)\longinj\Omega(Z\cap U)\oplus\Omega(Z\cap
V)\longsur\Omega(Z\cap U\cap V).
\end{gather*}
Forming mapping cones we obtain an exact sequence of relative de Rham
complexes
\[
\Omega(X,Z)\longinj\Omega(U,Z\cap U)\oplus\Omega(V,Z\cap
V)\longsur\Omega(U\cap V,Z\cap U\cap V).
\]
Taking $U=i_N(N)$ and $V=Z$ yields the exact sequence
\[
\Omega(X,Z)\longinj\Omega(N,N^\times)\oplus\Omega(Z,Z)\longsur
\Omega(N^\times,N^\times).
\]
Writing the corresponding long exact cohomology sequence gives the
result.
\end{proof}

In the setting of Lemma~\ref{lemma;excision} suppose that $Y$ is
co-oriented, let $r$ be the codimension of $Y$, and choose a Thom form
$\tau(N)\in\Omega^r(N)$ of $N$.  For each $\beta\in\Omega_\cv(N)$ the
form $(i_N^{-1})^*\beta$ is supported near $Y$, so extends by zero to
a form on $X$, which we denote by $i_{N,*}\beta$.  The \emph{wrong-way
  homomorphism} is the degree $r$ morphism of complexes
\begin{equation}\label{equation;wrong-way}
i_*=i_{N,*}\circ\zeta_{N,*}\colon\Omega(Y)[-r]\longto\Omega(X)
\end{equation}
defined by $i_*\alpha=i_{N,*}(\tau(N)\wedge\pi_N^*\alpha)$.

\begin{proposition}\label{proposition;thom-gysin-submanifold}
Let $X$ be a manifold, let $Y$ be a closed co-oriented submanifold of
codimension $r$, and let $i\colon Y\to X$ and $j\colon X\backslash
Y\to X$ be the inclusions.  We have a long exact sequence
\[
\begin{tikzcd}
\cdots\ar[r]&H^{k-r}(Y)\ar[r,"i_*"]&H^k(X)\ar[r,"j^*"]&
H^k(X\backslash Y)\ar[r]&H^{k-r+1}(Y)\ar[r]&\cdots
\end{tikzcd}
\]
\end{proposition}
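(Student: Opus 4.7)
My plan is to derive the sequence by combining the long exact cohomology sequence of the mapping cone $\Omega(X,X\backslash Y)=\Omega(j)$ with a chain of three quasi-isomorphisms that identifies its cohomology with $H(Y)[-r]$. By the discussion in Section~\ref{section;preliminary}, the short exact sequence $\Omega(X\backslash Y)[-1]\inj\Omega(X,X\backslash Y)\sur\Omega(X)$ yields a long exact sequence
\[
\cdots\longto H^{k-1}(X\backslash Y)\longto H^k(X,X\backslash Y)\longto H^k(X)\xrightarrow{j^*}H^k(X\backslash Y)\longto\cdots,
\]
so it will suffice to construct a natural isomorphism $H^k(X,X\backslash Y)\cong H^{k-r}(Y)$ that is compatible with the map into $H^k(X)$.

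The chain of isomorphisms proceeds as follows. First, I choose a tubular neighbourhood embedding $i_N\colon N\to X$ of the normal bundle and invoke Lemma~\ref{lemma;excision} to get $H^k(X,X\backslash Y)\cong H^k(N,N^\times)$ via the restriction $i_N^*$. Second, Proposition~\ref{proposition;transgression} gives $H^k_\cv(N)\cong H^k(N,N^\times)$ via the transgression morphism $\psi$. Third, Theorem~\ref{theorem;thom} together with the Thom map $\zeta_{N,*}\colon\Omega(Y)[-r]\to\Omega_\cv(N)$ gives $H^{k-r}(Y)\cong H^k_\cv(N)$. Splicing these identifications into the long exact cohomology sequence of the mapping cone yields an exact sequence of the claimed shape.

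The main task, and the one where I expect the work to lie, is to verify that the composite map $H^{k-r}(Y)\to H^k(X)$ coincides with the wrong-way homomorphism $i_*$ of~\eqref{equation;wrong-way}. I would choose a Thom form $\tau(N)$ of $N$ whose support lies in a disc bundle contained in the image of $i_N$, so that extension by zero $i_{N,*}$ is well defined on all forms appearing below. Given a closed $\alpha\in\Omega^{k-r}(Y)$, I trace it through the chain: first $\zeta_{N,*}\alpha=\tau(N)\wedge\pi_N^*\alpha\in\Omega^k_\cv(N)$, then $\psi(\zeta_{N,*}\alpha)=(\tau(N)\wedge\pi_N^*\alpha,\phi(\tau(N)\wedge\pi_N^*\alpha))\in\Omega^k(N,N^\times)$. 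The support conditions on $\tau(N)$, together with the observation from the proof of Proposition~\ref{proposition;transgression} that $\phi$ preserves the disc-bundle support, allow one to lift this via $i_N^*$ to the relative cocycle $(i_{N,*}(\tau(N)\wedge\pi_N^*\alpha),i_{N,*}\phi(\tau(N)\wedge\pi_N^*\alpha))$ in $\Omega^k(X,X\backslash Y)$, whose projection to $\Omega^k(X)$ is precisely $i_*\alpha$. This identifies the map $H^{k-r}(Y)\to H^k(X)$ in the derived sequence as $i_*$ and finishes the proof.
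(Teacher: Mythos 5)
Your proof is correct and follows essentially the same route as the paper's: the long exact sequence of the mapping cone $\Omega(X,X\backslash Y)$ combined with the excision Lemma~\ref{lemma;excision}, Proposition~\ref{proposition;transgression}, and Theorem~\ref{theorem;thom}. Your explicit verification that the resulting map $H^{k-r}(Y)\to H^k(X)$ is the wrong-way homomorphism $i_*$ of~\eqref{equation;wrong-way} --- by chasing the relative cocycle $\bigl(i_{N,*}(\tau(N)\wedge\pi_N^*\alpha),\,i_{N,*}\phi(\tau(N)\wedge\pi_N^*\alpha)\bigr)$ --- is a detail the paper leaves implicit, and you handle the support issues correctly.
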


\begin{proof}
In the long exact sequence for the pair $(X,X\backslash Y)$ the term
$H^k(X,X\backslash Y)$ is isomorphic to $H^k(N,N^\times)$ by
Lemma~\ref{lemma;excision}, which is isomorphic to $H^{k-r}(Y)$ by
Theorem~\ref{theorem;thom} and
Proposition~\ref{proposition;transgression}.
\end{proof}

\section{The equivariant basic Thom isomorphism}
\label{section;foliated-thom}

As in the previous section, $\pi\colon E\to M$ denotes a smooth
oriented real vector bundle over a manifold $M$.  The treatment of the
Thom isomorphism theorem given in \S\,\ref{section;thom} can be
readily modified to apply to various subcomplexes and extensions of
the de Rham complex.  For instance, if the vector bundle is
equivariant with respect to a compact Lie group $G$, it leads to a new
proof of the Thom isomorphism in $G$-equivariant de Rham theory.  This
proof has the advantage over proofs such as the one given
in~\cite[\S\,10.6]{guillemin-sternberg;supersymmetry-equivariant} that
it offers an explicit homotopy equivalence between the $G$-equivariant
de Rham complexes $\Omega_{G,\cv}(E)[r]$ and $\Omega_G(M)$.

In this section, for the purpose of our
work~\cite{lin-sjamaar;riemannian}, we will consider a generalization
of this $G$-equivariant case, namely a situation where both $M$ and
$E$ are equipped with foliations and where a finite-dimensional Lie
algebra $\g$ acts transversely on the foliated manifolds $M$ and $E$.
We obtain an ``equivariant basic'' Thom isomorphism,
Theorem~\ref{theorem;equivariant-basic-thom}, for differential forms
which are equivariant with respect to the action of $\g$ and basic
with respect to the foliations.  This theorem can be regarded as a
substitute for a Thom isomorphism for $\g$-equivariant vector bundles
over the leaf space of $M$, which is usually not a manifold but a
topological space with poor properties.  It extends a result of
T\"oben~\cite[\S\,6]{toeben;localization-basic-characteristic}, who
considered the case where the foliation is Killing and the Lie algebra
$\g$ is Molino's structural Lie algebra of the foliation.

A Thom class does not necessarily exist in this context.  In
\S\,\ref{subsection;without} we offer a simple example where it does
not exist and in \S\,\ref{subsection;existence} we state a sufficient
condition for when it does exist.  For our purposes the most important
case where an equivariant basic Thom class exists is when $M$ is a
submanifold of a Riemannian foliated manifold and $E$ is the normal
bundle of $M$, as we will discuss in
\S\,\ref{section;foliated-thom-gysin}.

We start with a review of some topics in foliation theory
(\S\S\,\ref{subsection;foliation}--\ref{subsection;characteristic}),
standard references for which
include~\cite{kamber-tondeur;foliated-bundles},
\cite{moerdijk-mrcun;foliations-groupoids},
\cite{molino;riemannian-foliations},
and~\cite{tondeur;geometry-foliations}.

\subsection{Foliations}\label{subsection;foliation}

By a \emph{foliation} of $M$ we mean a smooth and regular
(i.e.\ constant rank) foliation.  We denote the $0$-dimensional
foliation of $M$ by $*$ or~$*_M$.  The tangent bundle of a foliation
$\F$ is written as $T\F$, and the leaf of a point $x\in M$ as $\F(x)$.
We call the dimension of the leaves $\F(x)$ the \emph{rank} or
\emph{dimension} of the foliation.  If $\F$ is a foliation of $M$ and
$U$ an open subset of $M$, the \emph{restricted foliation} $\F|_U$ is
the foliation of $U$ whose leaves are the connected components of the
intersections $U\cap\F(x)$.  We have $T(\F|_U)=(T\F)|_U$.  We say that
a smooth map $f\colon M\to M'$ of foliated manifolds $(M,\F)$ and
$(M',\F')$ is \emph{foliate} if $f$ maps each leaf $\F(x)$ of $\F$ to
the leaf $\F'(f(x))$ of $\F'$, and that $f$ is a \emph{foliate
  isomorphism} if $f$ is a diffeomorphism and $f$ and $f^{-1}$ are
foliate.

The space of sections of the tangent bundle of a foliation $\F$ is a
Lie subalgebra $\X(\F)$ of the Lie algebra of vector fields $\X(M)$.
Let $\lie{N}(\F)=N_{\X(M)}(\X(\F))$ be the normalizer of this
subalgebra.  Elements of $\lie{N}(\F)$ are \emph{foliate vector
  fields}, i.e.\ vector fields whose flow consists of foliate maps.
The quotient $\lie{N}(\F)/\X(\F)$ is a Lie algebra, which we denote by
$\X(M,\F)$, and the elements of which we call \emph{transverse vector
  fields}.  Thus a transverse vector field is not a vector field, but
an equivalence class of foliate vector fields modulo $\X(\F)$.  The
flow of a transverse vector field is well-defined only up to a flow
along the leaves of~$\F$.

The notion of a transverse vector field is a substitute for the
ill-defined notion of a vector field on the leaf space $M/\F$.  We say
that the leaf space \emph{is a manifold} if $M/\F$ is equipped with a
smooth structure with respect to which the quotient map $M\to M/\F$ is
a submersion.  If $M/\F$ is a manifold, transverse vector fields on
$M$ can be identified naturally with vector fields on~$M/\F$.

\subsection{Basic differential forms}\label{subsection;basic}

Let $\F$ be a foliation of the manifold $M$.  A differential form
$\alpha$ on $M$ is \emph{$\F$-basic} if its Lie derivatives
$L(u)\alpha$ and contractions $\iota(u)\alpha$ vanish for all vector
fields $u$ on $M$ that are tangent to $\F$.  The set of $\F$-basic
forms is a differential graded subalgebra of the de Rham complex
$\Omega(M)$, which we denote by $\Omega(M,\F)$.  Its cohomology is a
graded commutative algebra called the \emph{$\F$-basic de Rham
  cohomology} and denoted by $H(M,\F)$.  If the leaf space $M/\F$ is a
manifold, then the basic de Rham complex of $(M,\F)$ is isomorphic to
the de Rham complex of $M/\F$.  A foliate map
$f\colon(M,\F)\to(M',\F')$ induces a pullback morphism of differential
graded algebras $f^*\colon\Omega(M',\F')\to\Omega(M,\F)$ and hence a
morphism of graded algebras $f^*\colon H(M',\F')\to H(M,\F)$.

\subsection{Foliated bundles}\label{subsection;bundle}

Let $(M,\F=\F_M)$ and $(P,\F_P)$ be foliated manifolds.  Let
$\pi\colon P\to M$ be a smooth map and let $F$ be a third manifold.
By a \emph{foliated fibre bundle chart on $P$ with fibre $F$} we mean
a pair $(U,\phi)$, where $U$ is an open subset of $M$ and $\phi$ is a
diffeomorphism
$\begin{tikzcd}[cramped,sep=small]
  \phi\colon\pi^{-1}(U)\ar[r,"\cong"]&U\times F
\end{tikzcd}$
which satisfies ${\pr_1}\circ\phi=\pi$ (with $\pr_1\colon U\times F\to
U$ being the projection onto the first factor) and which is a foliate
isomorphism with respect to the restricted foliation
$\F_P|_{\pi^{-1}(U)}$ of $\pi^{-1}(U)$ and the product foliation
$\F|_U\times{*}_F$ of $U\times F$.  We say that $P$ is a
\emph{foliated fibre bundle with fibre $F$} if $P$ is equipped with a
\emph{foliated fibre bundle atlas with fibre $F$}, i.e.\ a collection
of foliated fibre bundle charts $(U_i,\phi_i)$ with fibre $F$ whose
domains $U_i$ form an open cover of~$M$.

We single out some special classes of foliated fibre bundles.  If the
fibre is a Lie group $K$ and the foliated fibre bundle charts
$(U_i,\phi_i)$ are principal $K$-bundle charts, we say $P$ is a
\emph{foliated principal $K$-bundle}.  If the fibre is a (real) vector
space and the foliated fibre bundle charts $(U_i,\phi_i)$ are vector
bundle charts, we say $P$ is a \emph{foliated vector bundle}.  If $P$
is a foliated principal $K$-bundle over $M$ and $F$ is any manifold on
which $K$ acts smoothly, then the quotient $Q=(P\times F)/K$ by the
diagonal $K$-action has a natural structure of a foliated fibre bundle
over $P/K=M$ with fibre $F$.  We call a bundle $Q$ that arises in this
way a \emph{foliated fibre bundle with structure group $K$}.  In
particular a foliated vector bundle is a foliated fibre bundle with
fibre $F=\R^r$ and structure group $K=\GL(r,\R)$.

Let $\pi\colon(P,\F_P)\to(M,\F_M)$ be a foliated fibre bundle with
structure group a finite-dimensional Lie group $K$.  The bundle
projection $\pi$ is a foliate map and the foliations $\F_M$ and $\F_P$
have the same rank.  Indeed, for each $p\in P$ the tangent map
$T_p\pi\colon T_pP\to T_{\pi(p)}M$ maps the tangent space $T_p\F_P$ to
the leaf $\F_P(p)$ isomorphically onto the tangent space
$T_{\pi(p)}\F_M$ of the leaf $\F_M(\pi(p))$.  Hence for each $p\in P$
the restriction of $\pi$ to the leaf $\F_P(p)$ is a local
diffeomorphism $\F_P(p)\to\F_M(\pi(p))$, and $\pi$ induces a vector
bundle isomorphism
$\begin{tikzcd}[cramped,sep=small] T\F_P\ar[r,"\cong"]&\pi^*T\F_M
\end{tikzcd}$.
The inverse of this isomorphism is a map
\begin{equation}\label{equation;partial-connection}
\pi^*T\F_M\longto TP,
\end{equation}
known as the \emph{partial connection} of the foliated bundle $P$,
whose image is equal to $T\F_P$.  The partial connection gives us, for
each vector field $v$ tangent to $\F_M$, a unique vector field $v_P$
tangent to $\F_P$ which is $\pi$-related to $v$.  The map $v\mapsto
v_P$ is a Lie algebra homomorphism
\[\X(\F_M)\longto\X(\F_P)\]
called the \emph{horizontal lifting homomorphism} of the partial
connection.  It follows that, for each leaf $L$ of $\F_M$, the partial
connection restricts to a genuine (Ehresmann) connection $\pi^*(TL)\to
T(P|_L)$ on $P|_L$ which is flat.  Hence for each $p\in P$ the map
$\F_P(p)\to\F_M(\pi(p))$ is a Galois covering map, whose covering
group is the holonomy of the connection.

\begin{remark}\label{remark;quotient}
Suppose that the leaf space $\bar{M}=M/\F_M$ is a manifold and that
the holonomy of the partial connection on $P$ is trivial for all
leaves of $M$.  Then, by \cite[\S\,2.6,
  Lemma~2.5]{molino;riemannian-foliations}, there exists a pair
$(\bar{P},q_P)$, unique up to isomorphism, consisting of a fibre
bundle $\bar{P}\to\bar{M}$ with structure group $K$ and an isomorphism
$\begin{tikzcd}[cramped,sep=small]q_P\colon
  P\ar[r,"\cong"]&q^*\bar{P}\end{tikzcd}$.  Every foliated fibre
bundle over $M$ with structure group $K$ is locally, in a foliation
chart on $M$, isomorphic to a pullback of this kind.
\end{remark}

An instance of a foliated principal bundle over $M$ that we shall
frequently return to is the \emph{transverse frame bundle} $P$ of the
foliation $\F$, which is defined as the frame bundle of the normal
bundle $N\F=TM/T\F$ of the foliation, and which is a principal
$K=\GL(q,\R)$-bundle, where $q$ is the codimension of the foliation
$\F$.  We explain briefly how the foliation of $P$ comes about;
see~\cite[\S\,2.4]{molino;riemannian-foliations} for details.  The
flow $\phi_t$ of a foliate vector field $v\in\lie{N}(\F)$ is a local
$1$-parameter group of foliate diffeomorphisms of $M$, and so the
tangent flow $T\phi_t$ induces a local $1$-parameter group of vector
bundle automorphisms of $N\F$, which lifts naturally to a
$K$-equivariant flow $\phi_{P,t}$ of bundle automorphisms of $P$.  The
infinitesimal generator $\pi^\dag(v)$ of $\phi_{P,t}$ is a
$K$-invariant vector field on $P$, and the map $v\mapsto\pi^\dag(v)$
is a homomorphism of Lie algebras
$\pi^\dag\colon\lie{N}(\F)\longto\X(P)^K$.  The restriction of
$\pi^\dag$ to $\X(\F)$ is a homomorphism $\X(\F)\to\X(P)$, which is
the horizontal lifting map for a partial connection $\pi^*T\F\to TP$
on $P$.  The image of the map $\pi^*T\F\to TP$ is the tangent bundle
to the foliation $\F_P$ that makes $P$ a foliated principal
$K$-bundle.  We call the Lie algebra map
\begin{equation}\label{equation;natural-lifting}
\pi^\dag\colon\X(M,\F)\longto\X(P,\F_P)^K
\end{equation}
induced by $\pi^\dag$ the \emph{natural lifting homomorphism} of the
transverse frame bundle.  The normal bundle $N\F$ is isomorphic to the
associated bundle $(P\times\R^q)/K$ and so inherits the structure of a
foliated vector bundle from $P$.

\subsection{Transverse Lie algebra actions and equivariant basic
  differential forms}\label{subsection;transverse}

Let $(M,\F)$ be a foliated manifold and let $\g$ be a
finite-dimensional real Lie algebra.  A \emph{transverse action} of
$\g$ on $(M,\F)$ is a Lie algebra homomorphism $a$ from $\g$ to the
Lie algebra of transverse vector fields $\X(M,\F)$.  If $a$ is a
transverse $\g$-action on $M$, we call the triple $(M,\F,a)$ a
\emph{foliated $\g$-manifold}.  The notion of a transverse Lie algebra
action on a foliated manifold is a substitute for the ill-defined
notion of a Lie algebra action on the leaf space.  If the leaf space
$M/\F$ is a manifold, a transverse $\g$-action on $M$ amounts to a
$\g$-action on $M/\F$.

Let $(M,\F,a)$ be a foliated $\g$-manifold.  For $\xi\in\g$ let
$\xi_M=a(\xi)\in\X(M,\F)$ denote the transverse vector field on $M$
defined by the $\g$-action.  For $\alpha\in\Omega(M,\F)$ define
\[
\iota(\xi)\alpha=\iota(\tilde{\xi}_M)\alpha,\qquad
L(\xi)\alpha=L(\tilde{\xi}_M)\alpha,
\]
where $\tilde{\xi}_M\in\lie{N}(\F)$ is a foliate vector field that
represents $\xi_M$.  Since $\alpha$ is $\F$-basic, these contractions
and derivatives are independent of the choice of the representative
$\tilde{\xi}_M$ of $\xi_M$.  Goertsches and
T\"oben~\cite[Proposition~3.12]%
{goertsches-toeben;equivariant-basic-riemannian} observed that they
obey the usual rules of \'E. Cartan's differential calculus, namely
$[L(\xi),L(\eta)]=L([\xi,\eta])$ etc.  In other words the transverse
$\g$-action makes the basic de Rham complex $\Omega(M,\F)$ a
\emph{$\g$-differential graded algebra} in the sense
of~\cite[Appendice]{sergiescu;cohomologie-basique}
or~\cite{alekseev-meinrenken;lie-chern-weil}.  (These objects are
called \emph{$\g^\star$-algebras}
in~\cite[Ch.\ 2]{guillemin-sternberg;supersymmetry-equivariant}).  See
\S\S\,\ref{section;tildeg}--\ref{section;gtilde-algebras} for a
detailed definition of $\g$-differential graded modules and algebras.
Like any other $\g$-differential graded module, the $\F$-basic de Rham
complex $\Omega(M,\F)$ has a \emph{Weil complex}
\[
\Omega_\g(M,\F)=(\W\g\otimes\Omega(M,\F))_\bbas\g.
\]
Here $\W\g$ denotes the Weil algebra of $\g$, which is a differential
graded commutative algebra isomorphic to $S\g^*\otimes\Lambda\g^*$ as
an algebra, and ``$\bbas\g$'' means ``$\g$-basic subcomplex''.  (See
\S\S\,\ref{section;weil}--\ref{section;equivariant}.)  We refer to
elements of $\Omega_\g(M,\F)$ as \emph{$\g$-equivariant $\F$-basic
  differential forms}.  The cohomology of the Weil complex is the
\emph{$\g$-equivariant $\F$-basic de Rham cohomology} $H_\g(M,\F)$ of
the foliated manifold with respect to the transverse action.  We will
frequently abbreviate the cumbersome phrase ``$\g$-equivariant
$\F$-basic'' to ``equivariant basic''.

Let $(M',\F',a')$ be another foliated $\g$-manifold.  We say that a
foliate map $f\colon M\to M'$ is \emph{$\g$-equivariant} if the
transverse vector fields $\xi_M$ and $\xi_{M'}$ are $f$-related for
all $\xi\in\g$.  A $\g$-equivari\-ant foliate map $f$ induces a
pullback map of $\g$-differential graded algebras
$\Omega(M',\F')\to\Omega(M,\F)$.  We say that a smooth map
$f\colon[0,1]\times M\to M'$ is a \emph{$\g$-equivariant foliate
  homotopy} if the map $f_t\colon M\to M'$ defined by $f_t(x)=f(t,x)$
is $\g$-equivariant foliate for all $t\in[0,1]$.

The following statement, the non-equivariant version of which is due
to T\"oben~\cite[\S\,2]{toeben;localization-basic-characteristic}, is
a variant of the standard homotopy lemma in de Rham theory.  A
\emph{morphism of $\g$-differential graded modules}
$\phi\colon\M_1\to\M_2$ is a degree $0$ linear map $\phi$ satisfying
$[d,\phi]=[\iota(\xi),\kappa]=[L(\xi),\kappa]=0$ for all $\xi\in\g$.
Let $\phi_0$, $\phi_1\colon\M_1\to\M_2$ be two morphisms of
$\g$-differential graded modules.  A \emph{homotopy of
  $\g$-differential graded modules} between two morphisms $\phi_0$ and
$\phi_1$ is a degree $-1$ map $\kappa\colon\M_1\to\M_2$ satisfying
$\phi_1-\phi_0=[d,\kappa]=d\kappa+\kappa d$ and
$[\iota(\xi),\kappa]=[L(\xi),\kappa]=0$ for all $\xi\in\g$.

\begin{lemma}\label{lemma;cochain-homotopy}
Let $(M,\F)$ and $(M',\F')$ be foliated manifolds equipped with
transverse actions of a Lie algebra $\g$.  Let $f\colon[0,1]\times
M\to M'$ be a $\g$-equivariant foliate homotopy.  Then the pullback
morphisms $f_0$ and $f_1\colon\Omega(M',\F')\to\Omega(M,\F)$ are
homotopic as morphisms of $\g$-differential graded algebras.  In
particular they induce the same homorphisms in equivariant basic
cohomology: $f_0^*=f_1^*\colon H_\g(M',\F')\to H_\g(M,\F)$.
\end{lemma}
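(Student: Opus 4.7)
The plan is to mimic the classical fibre-integration homotopy argument in de Rham theory, verify that it descends compatibly to the transverse basic setting, and then extend it to the equivariant complex via the Weil model.

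First I would set up the candidate homotopy. Let $\pi\colon[0,1]\times M\to M$ be the projection, write $\pi_*$ for fibre integration over the compact interval, and let $j_t\colon M\to[0,1]\times M$ be the inclusion $x\mapsto(t,x)$. The fibrewise Stokes identity (Lemma~\ref{lemma;fibre}) gives $d\pi_*+\pi_*d=j_1^*-j_0^*$, and since $f\circ j_t=f_t$ the degree $-1$ operator $\kappa=\pi_*\circ f^*\colon\Omega(M')\longto\Omega(M)$ satisfies $f_1^*-f_0^*=d\kappa+\kappa d$ as a first, purely de Rham-theoretic, statement.

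Next I would check the two further properties required to upgrade $\kappa$ to a homotopy of $\g$-differential graded modules. Because each $f_t$ is foliate, $f$ sends the product foliation $*_{[0,1]}\times\F$ into $\F'$, so for any $u\in\X(\F)$ the vector field $(0,u)$ on $[0,1]\times M$ is carried by $df$ into $T\F'$; on $\F'$-basic forms both $\iota((0,u))f^*$ and $L((0,u))f^*$ therefore vanish, and the standard intertwining of $\pi_*$ with contractions and Lie derivatives along horizontal vector fields yields $\kappa(\Omega(M',\F'))\subseteq\Omega(M,\F)$. Fix now $\xi\in\g$ together with foliate representatives $\tilde\xi_M\in\lie{N}(\F)$ and $\tilde\xi_{M'}\in\lie{N}(\F')$. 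The $\g$-equivariance of each $f_t$ forces $df(0,\tilde\xi_M)-\tilde\xi_{M'}\circ f$ to be a section of $T\F'$, and because the forms under consideration are $\F'$-basic this gives $f^*\iota(\tilde\xi_{M'})\alpha=\iota((0,\tilde\xi_M))f^*\alpha$. Composing with $\pi_*$ one obtains $[\iota(\xi),\kappa]=0$ as graded commutators, and $[L(\xi),\kappa]=0$ then follows from Cartan's formula $L(\xi)=[d,\iota(\xi)]$ together with the graded Jacobi identity, the already established $[d,\kappa]=f_1^*-f_0^*$, and the $\g$-equivariance of $f_0^*$ and $f_1^*$.

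Finally I would pass to equivariant basic cohomology. Tensoring with the Weil algebra $\W\g$, the $\g$-differential graded module homotopy $\kappa$ induces a chain homotopy $\id_{\W\g}\otimes\kappa$ between $\id\otimes f_0^*$ and $\id\otimes f_1^*$ on $\W\g\otimes\Omega(M',\F')$; this operator preserves the $\g$-basic subcomplex and so descends to a chain homotopy $\Omega_\g(M',\F')\to\Omega_\g(M,\F)[-1]$, proving $f_0^*=f_1^*$ on $H_\g(M',\F')$. The main obstacle I expect is the bookkeeping around transverse vector fields, which are only defined modulo $\X(\F)$: one must check that the standard intertwining relations among $df$, $\pi_*$, $\iota$ and $L$ depend only on the class of a foliate representative once everything is restricted to basic forms. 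Once this is carefully set up, the rest is a routine adaptation of the classical Cartan homotopy argument.
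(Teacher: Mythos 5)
Your proposal is correct and follows essentially the same route as the paper: the homotopy operator is the same $\kappa=\pi_*\circ f^*$, and the verification that it preserves basic forms and intertwines the operations $\iota(\xi)$ and $L(\xi)$ proceeds exactly as in the paper's proof, via the product foliation on the cylinder and the fibre-integration identities of Lemma~\ref{lemma;fibre}. The only (harmless) variation is that you deduce $[L(\xi),\kappa]=0$ from Cartan's formula and the graded Jacobi identity, whereas the paper checks it directly.
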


\begin{proof}
As reviewed in Appendix~\ref{section;fibre}, on the ordinary de Rham
complexes we have a homotopy $f_1^*-f_0^*=[d,\kappa]$ given by the
homotopy operator $\kappa=\pi_*\circ
f^*\colon\Omega(M')\to\Omega(M)[-1]$.  Here $\pi\colon[0,1]\times M\to
M$ is the projection and $\pi_*$ denotes integration over the fibre
$[0,1]$.  We assert that the operator $\kappa$ preserves the basic
subcomplexes and is a homotopy of $\g$-differential graded algebras
when restricted to those subcomplexes.  To show this let us furnish
the cylinder $[0,1]\times M$ with the foliation $*\times\F$, whose
leaves are of the form $\{t\}\times\F(x)$ for $t\in[0,1]$ and $x\in
M$.  Then the homotopy $f$ is a foliate map.  Let $u$ be a vector
field on $M$ tangent to $\F$.  Then $(0,u)$ is a vector field on
$[0,1]\times M$ tangent to $*\times\F$.  The vector fields $u$ and
$(0,u)$ are $\pi$-related, so
Lemma~\ref{lemma;fibre}\eqref{item;fibre-derivation} tells us that
\[
\iota(u)\circ\pi_*=-\pi_*\circ\iota((0,u)),\qquad
L(u)\circ\pi_*=\pi_*\circ L((0,u)).
\]
If $u'\in\X(\F')$ is a vector field on $M'$ which is $f$-related to
$(0,u)$, then
\[
\iota((0,u))\circ f^*=f^*\circ\iota(u'),\qquad L((0,u))\circ
f^*=f^*\circ L(u').
\]
Combining these identities gives
\[
\iota(u)\circ\kappa=-\kappa\circ\iota(u'),\qquad
L(u)\circ\kappa=\kappa\circ L(u').
\]
It follows that $\kappa$ maps $\Omega(M',\F')$ to $\Omega(M,\F)$.  The
transverse $\g$-action on $(M,\F)$ extends to a transverse $\g$-action
on $([0,1]\times M,*\times\F)$ by letting $\g$ act trivially in the
$t$-direction.  Then the maps $\pi$ and $f$ are $\g$-equivariant, so
the transverse vector fields $\xi_M$, $\xi_{[0,1]\times M}$ and
$\xi_{M'}$ are $\pi$- and $f$-related.  Again by
Lemma~\ref{lemma;fibre}\eqref{item;fibre-derivation}, $\pi_*$ commutes
with the operations $\iota(\xi)$ and $L(\xi)$, and so does $f^*$.
This shows that $\kappa\colon\Omega(M',\F')\to\Omega(M,\F)[-1]$ is a
homotopy of $\g$-differential graded modules.
\end{proof}

\subsection{Equivariant basic characteristic forms}
\label{subsection;characteristic}

Let $K$ be a Lie group with Lie algebra $\liek$ and let
$\pi\colon(P,\F_P)\to(M,\F=\F_M)$ be a foliated principal $K$-bundle
as in \S~\ref{subsection;bundle}.  For the purpose of constructing
Thom classes in \S~\ref{subsection;existence} we recall how under
appropriate hypotheses one can define characteristic classes of $P$ in
the basic cohomology of $M$ and how, in the presence of a transverse
action of a Lie algebra, these classes lift to equivariant basic
classes.

A connection on the principal bundle $P$ can be viewed either as a
bundle map $\pi^*TM\to TP$ or as a $1$-form
$\theta\in\Omega^1(P,\liek)$.  A connection on $P$ is called
\emph{basic} (or \emph{projectable}
in~\cite{molino;riemannian-foliations}) if the associated $1$-form is
$\F_P$-basic.  According
to~\cite[\S\,2.6]{molino;riemannian-foliations} a connection $1$-form
$\theta$ is $\F_P$-basic if and only if for every foliation chart $U$
of $M$ with quotient manifold $\bar{U}=U/(\F|_U)$ the restriction of
$\theta$ to $P|_U$ is the pullback of a connection $1$-form
$\bar\theta_U\in\Omega^1(\bar{U},\bar{P}_U)$ on the quotient principal
bundle $\bar{P}_U=P|_U\big/\bigl(\F_P|_{\pi^{-1}(U)}\bigr)$ over
$\bar{U}$.  In particular a basic connection on $P$, viewed as a
bundle map $\pi^*TM\to TP$, is an extension of the partial connection
$\pi^*T\F_M\to T\F_P$ defined in~\eqref{equation;partial-connection}.

Now suppose $P$ is equipped with a transverse action of a
finite-dimensional Lie algebra $\g$ which commutes with the
$K$-action.  This transverse action descends to a unique transverse
action on the base manifold $M$ with the property that the bundle
projection $\pi\colon P\to M$ is $\g$-equivariant.  We say that $P$ is
a \emph{$\g$-equivariant foliated principal $K$-bundle}.

A natural instance of such a bundle is the transverse frame bundle $P$
of the foliation $\F$.  If $a\colon\g\to\X(M,\F)$ is any transverse
$\g$-action on $M$, the homomorphism $\pi^\dag\circ
a\colon\g\to\X(P,\F_P)^K$, where $\pi^\dag$ is the lifting
homomorphism~\eqref{equation;natural-lifting}, makes $P$ a
$\g$-equivariant foliated principal $K$-bundle.

\begin{lemma}\label{lemma;basic-connection}
Suppose that the $\g$-equivariant foliated principal $K$-bundle $P$
admits a principal connection $\pi^*TM\to TP$ which is $\g$-invariant
and $\F_P$-basic.  Then the associated connection $1$-form
$\theta\in\Omega^1(P,\F_P;\liek)$, viewed as a map
$\liek^*\to\Omega^1(P,\F_P)$ defines a $\g$-invariant connection on
the $\liek$-differential graded algebra $\Omega(P,\F_P)$.
\end{lemma}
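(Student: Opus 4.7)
The plan is to verify three things about the $\liek^*$-valued $1$-form $\theta$: that its image lies in the basic subcomplex $\Omega^1(P,\F_P)$; that it satisfies the two algebraic axioms defining a connection on the $\liek$-differential graded algebra $\Omega(P,\F_P)$; and that it is $\g$-invariant.

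The first item is immediate from the hypothesis, since by the characterization recalled just above, a principal connection on $P$ is $\F_P$-basic precisely when the associated $\liek$-valued $1$-form has $\F_P$-basic components, i.e.\ $\theta(\alpha)\in\Omega^1(P,\F_P)$ for every $\alpha\in\liek^*$. For the second item I would simply invoke the two classical identities $\iota(X_P)\theta=X$ and $L(X_P)\theta=-\ad(X)\theta$, valid for every $X\in\liek$, where $X_P$ is the fundamental vector field on $P$ associated to $X$. These are the standard defining properties of a principal connection $1$-form, and they translate term-by-term into the two defining axioms of a connection on the $\liek$-DGA $\Omega(P,\F_P)$ set up in Appendix~\ref{section;cartan-chern-weil}. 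Both $\iota(X_P)$ and $L(X_P)$ restrict to the $\F_P$-basic subcomplex because the $K$-action on $P$ consists of foliate diffeomorphisms, so $X_P\in\lie{N}(\F_P)$.

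The only point requiring genuine care is the $\g$-invariance, and this is where I expect the main subtlety to lie. For $\xi\in\g$ the transverse vector field $\xi_P\in\X(P,\F_P)$ is represented by a foliate vector field $\tilde\xi_P\in\lie{N}(\F_P)$ only modulo $\X(\F_P)$, so \emph{a priori} the Lie derivative $L(\tilde\xi_P)$ is not a well-defined operator on $\Omega(P)$. As already exploited in \S\,\ref{subsection;transverse}, however, this ambiguity disappears on the basic subcomplex $\Omega(P,\F_P)$, so $\g$-invariance of $\theta$ reduces to the single identity $L(\tilde\xi_P)\theta=0$ for one, equivalently every, foliate lift $\tilde\xi_P$ of $\xi_P$. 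This identity is the infinitesimal form of the hypothesis that the horizontal distribution $\pi^*TM\to TP$ is preserved by the flow of $\tilde\xi_P$.

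No serious obstacle is expected beyond this bookkeeping: the content of the lemma is really the observation that the three adjectives ``principal'', ``$\F_P$-basic'', and ``$\g$-invariant'' attached to the connection correspond one-to-one with the three structural requirements of a $\g$-invariant connection on the $\liek$-differential graded algebra $\Omega(P,\F_P)$.
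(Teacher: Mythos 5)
Your verification is correct and matches the paper, whose entire proof is the one-line remark that the lemma is a restatement of the definition of a $\g$-invariant connection on a $\liek$-differential graded algebra; you have simply unpacked that definition clause by clause. The three checks you carry out (basicness of the components, the two Cartan axioms via $\iota(X_P)\theta=X$ and $L(X_P)\theta=-\ad(X)\theta$, and $\g$-invariance via the well-definedness of $L(\tilde\xi_P)$ on the basic subcomplex) are exactly the intended content.
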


\begin{proof}
This is a restatement of the definition of $\g$-invariant connections
on $\liek$-differential graded algebras; see
\S\S\,\ref{section;gtilde-algebras} and~\ref{section;change}.
\end{proof}

Under the hypothesis of Lemma~\ref{lemma;basic-connection} we have the
$\g$-equivariant characteristic homomorphism defined
in~\eqref{equation;characteristic},
\[
c_{\g,\theta}\colon S(\liek[2]^*)^{\liek}\longto
\bigl(\Omega_\bbas{\liek}(P,\F_P)\bigr)_\g.
\]
Here the left-hand side denotes the algebra of $\liek$-invariant
polynomials on $\liek^*$ with the generators placed in degree $2$.
The right-hand side is the $\g$-Weil complex
$\bigl(\W\g\otimes\Omega_\bbas{\liek}(P,\F_P)\bigr)_\bbas{\g}$ of the
$\liek$-basic $\F_P$-basic de Rham complex of $P$.  If moreover the
structure group $K$ is connected, then
$\Omega_\bbas{\liek}(P,\F_P)=\Omega_\bbas{K}(P,\F_P)=\Omega(M,\F)$ as
a $\g$-differential graded algebra, so
$\bigl(\Omega_\bbas{\liek}(P,\F_P)\bigr)_\g=\Omega_\g(M,\F)$.  So if
$K$ is connected, the $\g$-equivariant characteristic homomorphism is
an algebra homomorphism
\begin{equation}\label{equation;equivariant-basic-characteristic}
c_{\g,\theta}\colon S(\liek[2]^*)^{\liek}\longto\Omega_\g(M,\F).
\end{equation}
Elements in the image
of~\eqref{equation;equivariant-basic-characteristic} are
\emph{$\g$-equivariant $\F$-basic characteristic forms}; their
cohomology classes are \emph{$\g$-equivariant $\F$-basic
  characteristic classes}.

If $\g=0$, the characteristic map is an algebra homomorphism
$c_\theta\colon S(\liek[2]^*)^{\liek}\to\Omega(M,\F)$ and we speak of
$\F$-basic characteristic forms and classes.

If the leaf space $\bar{M}=M/\F$ is a manifold and the principal
bundle $P$ descends to a principal bundle $\bar{P}$ on $\bar{M}$ as in
Remark~\ref{remark;quotient}, then the $\g$-equivariant $\F$-basic
characteristic forms of $P$ are the same as the $\g$-equivariant
characteristic forms of the quotient bundle $\bar{P}$.

An obstruction to the existence of basic connections on foliated
principal bundles known as the \emph{Atiyah class}, and examples of
bundles whose Atiyah class does not vanish, can be found
in~\cite[Ch.~8]{kamber-tondeur;foliated-bundles}
and~\cite[Ch.~2]{molino;riemannian-foliations}.  We are not aware of
any references for a $\g$-equivariant version of this obstruction.

\subsection{The equivariant basic Thom isomorphism}

Let $\g$ be a finite-dimen\-sional real Lie algebra and let $M$ be a
foliated $\g$-manifold.  Let $E$ be an oriented $\g$-equivariant
foliated vector bundle over $M$.  An \emph{equivariant basic Thom
  form} of $E$ is an $r$-form $\tau_\g\in\Omega_{\g,\cv}^r(E,\F_E)$
which satisfies $\pi_*\tau_\g=1$ and $d_\g\tau_\g=0$.  Such a form
does not necessarily exist.  An example where it does not exist is
given in \S~\ref{subsection;without}, and sufficient conditions for it
to exist are stated in Proposition~\ref{proposition;thom-form}.  If it
exists we have the following $\g$-equivariant $\F$-basic Thom
isomorphism theorem.

\begin{theorem}\label{theorem;equivariant-basic-thom}
Suppose that $E$ admits an equivariant basic Thom form $\tau_\g$.
Then the following conclusions hold.
\begin{enumerate}
\item\label{item;equivariant-basic-homotopy-equivalence}
Fibre integration
\[
\pi_*\colon\Omega_{\g,\cv}(E,\F_E)[r]\longto\Omega_\g(M,\F)
\]
is a homotopy equivalence.  A homotopy inverse of $\pi_*$ is the Thom
map
\[
\zeta_*\colon\Omega_\g(M,\F)\longto\Omega_{\g,\cv}(E,\F_E)[r]
\]
defined by $\zeta_*(\alpha)=\tau_\g\wedge\pi^*\alpha$.  A homotopy
$\zeta_*\pi_*\simeq\id$ is induced by the homotopy of
$\g$-differential graded modules given
in~\eqref{equation;g-homotopy-formula} below.
\item
All equivariant basic Thom forms of $E$ are cohomologous.  Their
cohomology class $\Thom_\g(E,\F_E)\in H_{\g,\cv}^r(E,\F_E)$ is
uniquely determined by the property $\pi_*(\Thom_\g(E,\F_E))=1$.
\item
$H_{\g,\cv}(E,\F_E)$ is a free $H_\g(M,\F)$-module of rank $1$
  generated by the Thom class $\Thom_\g(E,\F_E)$.
\end{enumerate}
\end{theorem}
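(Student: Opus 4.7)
The plan is to repeat the proof of Theorem \ref{theorem;thom} line by line, working throughout in the equivariant basic subcomplexes, and to invoke Lemma \ref{lemma;cochain-homotopy} to check that the cochain homotopy produced from the rotation is in fact a homotopy of $\g$-differential graded modules. The key observation is that the auxiliary objects on $E\oplus E$ used in that proof---the projections $p_1,p_2$, the swap $\phi(a,b)=(b,a)$, the reflection $\sigma(a,b)=(a,-b)$, and the rotation family $\varrho(t,a,b)$---are all $\g$-equivariant foliate, once $E\oplus E$ is given its natural foliation $\F_E\times_{\F_M}\F_E$ and its natural transverse $\g$-action inherited from $E$. For $\varrho$ this uses the fact that the transverse $\g$-action is $\R$-linear on each fibre of $E\oplus E\to M$, and acts trivially on the homotopy parameter $t$.

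For part (i), the identity $\pi_*\zeta_*=\id$ is immediate from $\pi_*\tau_\g=1$ and the equivariant projection formula. To exhibit a cochain homotopy $\zeta_*\pi_*\simeq\id$, I would reproduce the computation following~\eqref{equation;fibre-thom} verbatim, arriving at the same formula
\[
\kappa=(-1)^r p_{1,*}\circ p_*\circ\varrho^*\circ l(p_2^*\tau_\g)\circ p_1^*.
\]
By Lemma \ref{lemma;cochain-homotopy}, $p_*\circ\varrho^*$ is a homotopy of $\g$-differential graded modules on $\Omega(E\oplus E,\F_{E\oplus E})$; the remaining factors are morphisms of $\g$-differential graded modules because $p_1$, $p_2$ are $\g$-equivariant foliate maps and $\tau_\g$ is $\g$-equivariant basic and closed. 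Hence $\kappa$ restricts to a homotopy on the equivariant basic subcomplex $\Omega_{\g,\cv}(E,\F_E)$, and the equation $\zeta_*\pi_*-\id=d_\g\kappa+\kappa d_\g$ holds there.

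Parts (ii) and (iii) then follow formally from (i), just as in Theorem \ref{theorem;thom}: (ii) because $\pi_*$ is injective on cohomology and any two equivariant basic Thom forms push down to $1\in H_\g^0(M,\F)$; (iii) because $\zeta_*$ is an isomorphism of $H_\g(M,\F)$-modules in cohomology sending $1$ to $\Thom_\g(E,\F_E)$. The main obstacle is the compatibility bookkeeping in the first two paragraphs: one must confirm that each ingredient (fibre integrations, pullbacks, and wedging with $\tau_\g$) intertwines the contractions $\iota(\xi)$ and Lie derivatives $L(\xi)$ for $\xi\in\g$ and preserves the $\g$-basic subcomplexes. All of this follows mechanically from Lemma \ref{lemma;fibre}, Lemma \ref{lemma;cochain-homotopy}, and the hypotheses on $\tau_\g$ and on the $\g$-equivariant foliated structures on $E$ and $E\oplus E$.
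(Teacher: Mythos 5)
Your proposal is correct and follows essentially the same route as the paper: replace $\tau$ by $\tau_\g$ in the homotopy formula~\eqref{equation;homotopy-formula}, observe that $p_1$, $p_2$, $\sigma$, $\varrho$ and $p$ are $\g$-equivariant foliate, and check that every ingredient commutes with $\iota(\xi)$ and $L(\xi)$ so that the homotopy descends to the $\g$-basic subcomplex of $\W\g\otimes\Omega_\cv(E,\F_E)$. The only presentational difference is that the paper makes explicit that one first builds the homotopy on the full $\g$-differential graded module $\W\g\otimes\Omega_\cv(E,\F_E)$ (where $\tau_\g$ is a $\g$-basic element) and only then restricts to the Weil complex, a point your bookkeeping paragraph covers implicitly.
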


\begin{proof}
We will verify that each step in the proof of
Theorem~\ref{theorem;thom} is valid in the present context.  Only the
proof of~\eqref{item;equivariant-basic-homotopy-equivalence} requires
comment.  We start by showing that the fibre integral of an
equivariant basic form is an equivariant basic form.  The usual fibre
integration map $\pi_*\colon\Omega_\cv(E)[r]\to\Omega(M)$ has the
properties
\[
d\pi_*=(-1)^r\pi_*d,\qquad
L(v)\pi_*=\pi_*L(w),\qquad\iota(v)\pi_*=(-1)^r\pi_*\iota(w)
\]
for every pair of $\pi$-related vector fields $v\in\X(M)$ and
$w\in\X(E)$.  (See Appendix~\ref{section;fibre}.)  It follows from
these properties that $\pi_*$ maps $\F_E$-basic forms to $\F$-basic
forms and restricts to a degree $-r$ morphism of $\g$-differential
graded modules
\begin{equation}\label{equation;fibre-basic}
\pi_*\colon\Omega_\cv(E,\F_E)[r]\longto\Omega(M,\F),
\end{equation}
where $\Omega_\cv(E,\F_E)=\Omega_\cv(E)\cap\Omega(E,\F)$ denotes the
$\g$-differential graded module of vertically compactly supported
basic forms on $E$.  The morphism~\eqref{equation;fibre-basic} extends
uniquely to an $(S\g^*)^\g$-linear degree $-r$ morphism of Weil
complexes
\[
\pi_*\colon\Omega_{\g,\cv}(E,\F_E)[r]\longto\Omega_\g(M,\F),
\]
and the projection formula~\eqref{equation;projection} shows that this
map is a degree $-r$ morphism of graded left
$\Omega_\g(M,\F)$-modules.  Now let $\zeta_*$ be the Thom map defined
by the equivariant basic Thom form $\tau_\g$.  The identity
$\pi_*\zeta_*=\id$ holds as in the proof of
Theorem~\ref{theorem;thom}.  Next we must find a cochain homotopy
$\kappa_\g$ of the complex $(\Omega_{\g,\cv}(E,\F_E),d_\g)$ satisfying
\[\zeta_*\pi_*-{\id}=d_\g\kappa_\g+\kappa_\g d_\g.\]
By definition $\Omega_{\g,\cv}(E,\F_E)$ is the Weil complex
$\M_\g=(\W\g\otimes\M)_\bbas{\g}$ of the $\g$-differential graded
module $\M=\Omega_\cv(E,\F_E)$, so it is enough to find a homotopy
$\kappa$ of the $\g$-differential graded module $\W\g\otimes\M$ with
the property
\begin{equation}\label{equation;foliated-homotopy}
\zeta_*\pi_*-{\id}=d\kappa+\kappa d.
\end{equation}
Replacing $\tau$ with $\tau_\g$ in~\eqref{equation;homotopy-formula}
we put
\begin{equation}\label{equation;g-homotopy-formula}
\kappa=(-1)^rp_{1,*}\circ p_*\circ\varrho^*\circ l(p_2^*\tau_\g)\circ
p_1^*,
\end{equation}
where the maps
\begin{gather*}
p_1,p_2\colon E\oplus E\to E,\qquad p\colon[0,1]\times(E\oplus E)\to
E\oplus E,\\
\varrho\colon[0,1]\times(E\oplus E)\to E\oplus E
\end{gather*}
are as in the proof of Theorem~\ref{theorem;thom}.  Then $\kappa$
satisfies~\eqref{equation;foliated-homotopy} and maps vertically
compactly supported forms to vertically compactly supported forms just
as before.  The direct sum $E\oplus E$ is a foliated vector bundle
with foliation $\F_{E\oplus E}$, and the cylinder $[0,1]\times(E\oplus
E)$ carries the foliation $*\times\F_{E\oplus E}$.  With respect to
these foliations the maps $p_1$, $p_2$, $\varrho$ and $p$ are foliate,
and the Thom form $\tau_\g$ is $\F_E$-basic, so $\kappa$ maps basic
forms to basic forms.  Moreover, the maps $p_1$, $p_2$, $\varrho$ and
$p$ are $\g$-equivariant and the Thom form $\tau_\g$, regarded an
element of $\M_\g\subset\W\g\otimes\M$, is $\g$-basic, so the map
$\kappa$ commutes with the contractions $\iota(\xi)$ and the
derivations $L(\xi)$ for all $\xi\in\g$.  This shows that $\kappa$ is
a homotopy of the $\g$-differential graded module $\W\g\otimes\M$.
\end{proof}  

\subsection{A foliated vector bundle without basic Thom form}
\label{subsection;without}

Not all foliated vector bundles have basic Thom forms.  As an example
consider the $2$-torus, i.e.\ the product $M=\T_1\times\T_2$ of two
copies $\T_1$, $\T_2$ of the circle $\T=\R/\Z$.  Let $\F=\F_M$ be a
foliation of $M$ with the following properties:
\begin{enumerate}
\renewcommand\theenumi{\alph{enumi}}
\renewcommand\labelenumi{(\theenumi)}
\item\label{item;action}
the $\T$-action
$t\cdot(t_1,t_2)=(t_1,tt_2)$ on $M$ is foliate, i.e.\ maps leaves to
leaves;
\item\label{item;orbit-leaf}
the $\T$-orbit $\ca{L}_0=\{0\}\times\T_2$ is a leaf;
\item\label{item;leaf-closure}
the leaf $\ca{L}_0$ is in the closure of every other leaf;
\item\label{item;leaf-transverse}
every leaf other than $\ca{L}_0$ is transverse to the $\T$-orbits.
\end{enumerate}
An example of such a foliation is produced at the end
of~\cite[\S\,1.4]{molino;riemannian-foliations}.



\begin{figure}[hb] 
\centering\includegraphics[width=6cm]{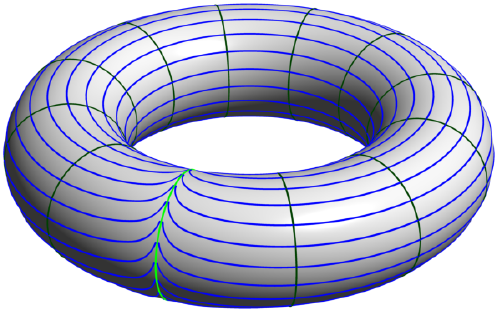}
\qquad\qquad\includegraphics[height=3.5cm,width=3.5cm]{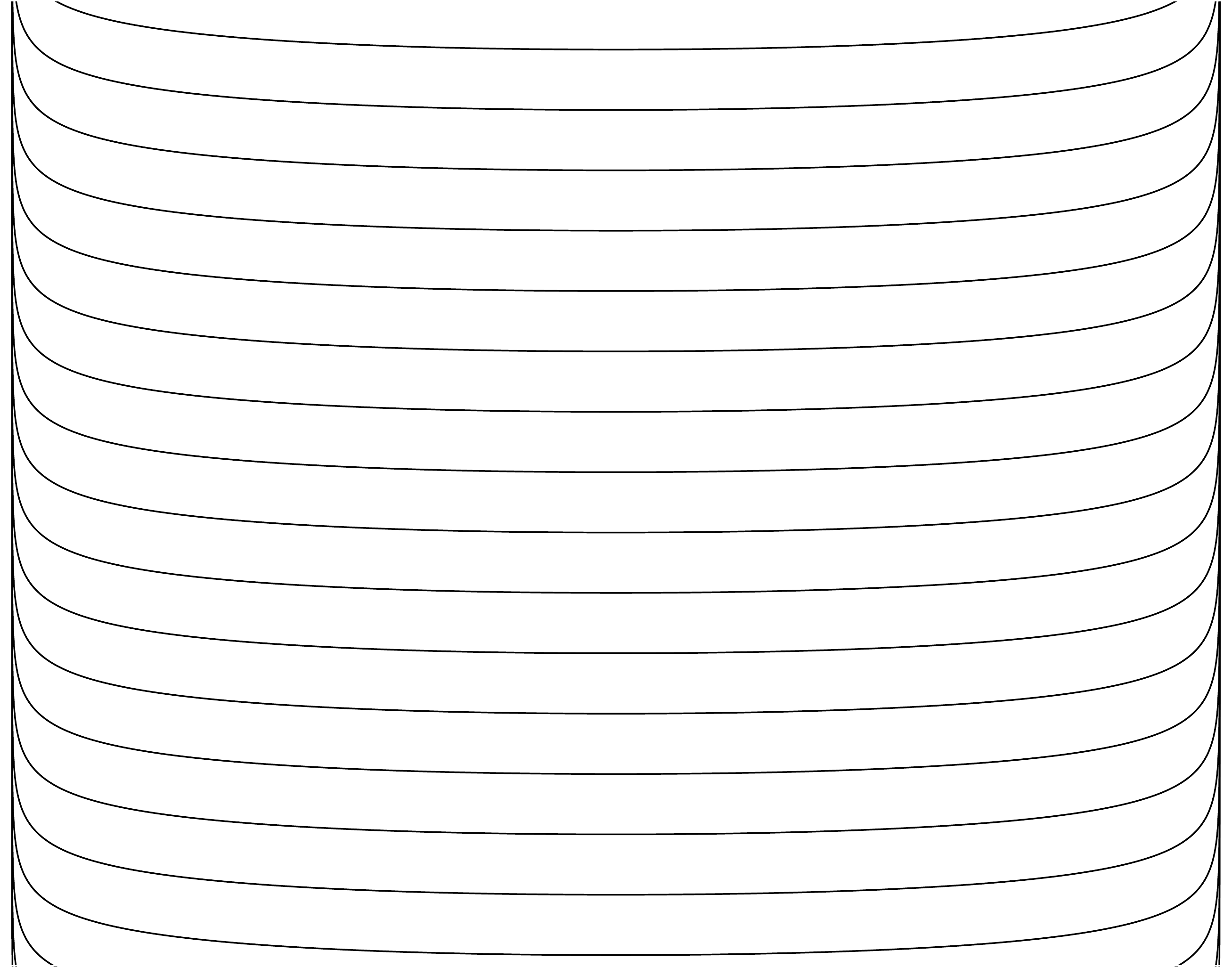}
\caption{Foliation of $2$-torus.  Compact leaf $\ca{L}_0$ shown in
  bright green, noncompact leaves in blue.  Orbits of $\T$-action
  shown in green.  Lifted foliation of unit square shown on right.}
\end{figure}

The normal bundle $E=N\F=TM/T\F$ of the foliation $\F$ is a foliated
vector bundle over $M$ of rank~$1$.  It is also an equivariant vector
bundle with respect to the $\T$-action on $M$.  The foliate vector
field $\tilde{v}=\partial/\partial t_2$ is the generator of the
$\T$-action on $M$.  Let $v=\tilde{v}\bmod\X(\F)\in\Gamma(E)$ be the
transverse vector field determined by $\tilde{v}$.  The actions of $v$
on $M$ and $E$ make $E$ a $\g$-equivariant foliated vector bundle,
where $\g=\R$.

\begin{lemma}\phantomsection\label{lemma;fmfebasic}
\begin{enumerate}
\item\label{item;fmbasic}
$\Omega^0(M,\F)=\R$ and $\Omega^1(M,\F)=0$.
\item\label{item;febasic}
$\Omega^0(E,\F_E)=\R$. 
\end{enumerate}
\end{lemma}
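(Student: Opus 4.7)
The plan is to handle the three claims in turn, using hypotheses~\eqref{item;action}--\eqref{item;leaf-transverse} and, in the last step, an explicit description of the Bott partial connection on $E = N\F$ for Molino's concrete foliation.

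For $\Omega^0(M,\F)=\R$, a basic $0$-form $f$ is smooth and constant on every leaf; since $\ca{L}_0$ is connected, $f|_{\ca{L}_0}$ reduces to a single value $c$, and \eqref{item;leaf-closure} combined with continuity forces $f=c$ on every other leaf as well. For $\Omega^1(M,\F)=0$, given a basic $1$-form $\alpha$ I introduce the smooth function $h=\alpha(\tilde v)$. The derivation identity $u(h) = (L_u \alpha)(\tilde v) + \alpha([u,\tilde v])$, combined with $L_u\alpha=0$ and the fact that $\tilde v$ is foliate (so $[u,\tilde v]\in\X(\F)$ for every $u\in\X(\F)$), shows that $u(h)=0$, i.e.\ $h$ is a basic $0$-form and hence a constant by the first half. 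Since $\tilde v$ is tangent to $\F$ along $\ca{L}_0$, that constant must be $0$. Then \eqref{item;leaf-transverse} yields $T_xM = T_x\F \oplus \R\tilde v(x)$ for every $x \notin \ca{L}_0$, and $\alpha$ vanishes on both summands, so $\alpha$ is zero on the dense open $M\setminus\ca{L}_0$ and, by continuity, on all of $M$.

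For $\Omega^0(E,\F_E)=\R$, let $f$ be basic on $E$. Since the zero section $\zeta\colon M\to E$ is foliate, $\zeta^*f$ is a basic function on $M$, hence equals a constant $c$ by the first part. Along $\ca{L}_0$ the vector field $\tilde v$ lies in $T\F$ and commutes with $\partial/\partial t_1$, so $\partial/\partial t_1 \bmod T\F$ is Bott-parallel; in the trivialisation of $E|_{\ca{L}_0}$ it provides, the leaves of $\F_E$ over $\ca{L}_0$ are the horizontal circles at constant fibre coordinate $\lambda$, and $f(0,t_2,\lambda)=\varphi(\lambda)$ for a smooth $\varphi\colon\R\to\R$ with $\varphi(0)=c$. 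The crux is the analysis over a non-compact leaf $\ca{L}$ of $\F$: writing Molino's generator as $X = g(t_1)\,\partial/\partial t_1 + \partial/\partial t_2$ with $g(0)=0$ and $g>0$ off $\ca{L}_0$, one computes $\nabla_X(\partial/\partial t_1 \bmod T\F) = -g'(t_1)(\partial/\partial t_1 \bmod T\F)$, and the parallel-transport equation $\dot\lambda = g'(t_1)\lambda$ integrates, using $\dot t_1 = g(t_1)$, to $\lambda(s) = \lambda_0\,g(t_1(s))/g(t_1(0))$, which tends to $0$ as the base curve approaches $\ca{L}_0$. Hence every leaf of $\F_E$ above $\ca{L}$ has the zero section over $\ca{L}_0$ in its closure, and constancy of $f$ along such leaves forces $f = \varphi(0) = c$ on the dense open set $E|_{M\setminus\ca{L}_0}$, whence $f \equiv c$ on all of $E$.

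The main obstacle is the parallel-transport computation for the last part: one has to pin down the explicit generator $X$ of Molino's foliation, work out the Bott connection in a fixed trivialisation of $N\F$, and verify that parallel sections along a non-compact leaf contract to zero as the leaf approaches $\ca{L}_0$. The remaining ingredients are just the derivation identity for Lie derivatives used on $1$-forms and routine continuity arguments on dense open subsets.
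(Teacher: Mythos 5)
Your proof is correct, but part~\eqref{item;febasic} follows a genuinely different route from the paper's. For part~\eqref{item;fmbasic} the paper simply cites Molino; your direct argument (the function $h=\alpha(\tilde v)$ is basic because $\tilde v$ is foliate while $\alpha$ is horizontal and invariant, hence $h$ is constant, hence $h=0$ since $\tilde v$ is tangent to $\ca{L}_0$, after which transversality off $\ca{L}_0$ kills $\alpha$ on a dense set) is a clean self-contained substitute. For part~\eqref{item;febasic} the paper never touches the Bott connection or the leaves of $\F_E$: it observes that each scalar multiple $u_\lambda=\lambda v$ of the transverse vector field $v$, viewed as a \emph{section} $M\to E$ of $E=N\F$, is a foliate map, so $f\circ u_\lambda$ is $\F$-basic on $M$ and hence constant by part~\eqref{item;fmbasic}; since $v$ vanishes along $\ca{L}_0$, that constant equals the value $c$ of $f$ on the zero section over $\ca{L}_0$ for every $\lambda$, and the images of the maps $u_\lambda$ sweep out the dense subset $E|_{M\setminus\ca{L}_0}$. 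You instead compute the foliation $\F_E$ explicitly: you fix a generator $X=g(t_1)\,\partial/\partial t_1+\partial/\partial t_2$, work out the Bott connection in the global frame $\partial/\partial t_1\bmod T\F$, and integrate the parallel-transport equation to show that every $\F_E$-leaf over a noncompact leaf accumulates on the zero section over $\ca{L}_0$. The computation is right ($\lambda(s)=\lambda_0\,g(t_1(s))/g(t_1(0))\to 0$ because the base curve spirals into $\ca{L}_0$ where $g$ vanishes), but it makes your proof depend on the explicit form of Molino's example, whereas the paper's argument uses only the abstract properties (a)--(d) together with the single fact that $v$ vanishes exactly on $\ca{L}_0$, and so would survive any other realization of those properties. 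What your version buys in exchange is a concrete description of the leaves of $\F_E$, which makes geometrically visible why $E$ carries so few basic functions; the paper's trick of feeding sections of $E$ back into $f$ is shorter but leaves that picture implicit.
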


\begin{proof}
\eqref{item;fmbasic}~This is verified as in \cite[pp.~17,
  40]{molino;riemannian-foliations}.

\eqref{item;febasic}~Let $f$ be an $\F_E$-basic function on $E$.
Viewed as a map $M\to E$ the section $v$ of $E$ is foliate, and
therefore the function $f\circ v$ is $\F$-basic on $M$.  More
generally, define a map $u\colon\R\times M\to E$ by
\[u(\lambda,x)=u_\lambda(x)=\lambda v(x).\]
For each $\lambda\in\R$ the scalar multiple $u_\lambda=\lambda v$ of
$v$ is a foliate map $M\to E$, and therefore for each $\lambda$ the
function $f\circ u_\lambda$ is $\F$-basic on $M$.  Thus $f\circ
u_\lambda$ is constant by~\eqref{item;fmbasic}.  Since $\tilde{v}$ is
tangent to the compact leaf $\ca{L}_0$ (by
property~\eqref{item;orbit-leaf} above), we have $v=0$ on $\ca{L}_0$
and so $u_\lambda(x_0)=\lambda v(x_0)=0$ for all $x_0\in\ca{L}_0$.
Hence $f(u_\lambda(x))=f(u_\lambda(x_0))=f(x_0)=c$ for all $x\in M$,
where $c$ is the (constant) value of $f$ on $\ca{L}_0$.  (Here we have
for simplicity identified $M$ with the zero section of $E$.)  In other
words, $f$ is constant on the image of the map $u$.  The transverse
vector field $v$ vanishes nowhere on $M\backslash\ca{L}_0$ (by
property~\eqref{item;leaf-transverse} above) and $E$ is of rank $1$,
so the image of the map $u$ contains $E\backslash\ca{L}_0$, which is
dense in $E$.  We conclude that $f=c$ on $E$.
\end{proof}

\begin{proposition}
If $\alpha\in\Omega^1(E,\F_E)$ is closed, then $\alpha=0$.  It follows
that $H_\cv^1(E,\F_E)=0$.  Therefore the foliated vector bundle $E$
has no basic Thom form, nor does $E$ have an equivariant basic Thom
form with respect to any transverse Lie algebra action on $E$ (such as
the action generated by the transverse vector field $v$).
\end{proposition}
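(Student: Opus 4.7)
The plan is to prove first that any closed basic $1$-form $\alpha \in \Omega^1(E,\F_E)$ vanishes identically; the other conclusions then fall out quickly. Following the method used for Lemma~\ref{lemma;fmfebasic}\eqref{item;febasic}, I would introduce the map $u \colon \R \times M \to E$, $u(\lambda,x) = \lambda v(x)$, and equip $\R \times M$ with the product foliation $*_\R \times \F$. Because the section $v \colon M \to E$ is foliate and scalar multiplication by $\lambda$ is a foliate automorphism of the foliated vector bundle $E$, the map $u$ is foliate, so $u^*\alpha$ is a basic $1$-form on $(\R \times M, *_\R \times \F)$.

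Unpacking the basic condition by testing against vector fields of the form $(0,w)$ with $w \in \X(\F)$, one sees that every basic $1$-form on $(\R \times M, *_\R \times \F)$ has the shape $f(\lambda,x)\,d\lambda + \beta_M(\lambda,x)$, where, for each fixed $\lambda$, $\beta_M(\lambda,\cdot)$ is an $\F$-basic $1$-form on $M$ and $f(\lambda,\cdot)$ is an $\F$-basic function. By Lemma~\ref{lemma;fmfebasic}\eqref{item;fmbasic} we get $\beta_M \equiv 0$ and $f$ depends only on $\lambda$, so $u^*\alpha = f(\lambda)\,d\lambda$ for some smooth function $f$. To show $f \equiv 0$, pick any $x_0 \in \ca{L}_0$: by property~\eqref{item;orbit-leaf} the vector field $\tilde v$ is tangent to $\ca{L}_0$, so $v(x_0) = 0$ and $u$ is constant along $\R \times \{x_0\}$. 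Hence $Tu(\partial/\partial\lambda)|_{(\lambda,x_0)} = 0$, forcing $f(\lambda) = 0$ for all $\lambda$, and so $u^*\alpha = 0$. Now at any $(\lambda,x)$ with $\lambda \ne 0$ and $x \in M \setminus \ca{L}_0$, property~\eqref{item;leaf-transverse} gives $v(x) \ne 0$; the tangent map $Tu$ sends $\partial/\partial\lambda$ to the nonzero vertical vector $v(x)$, and sends $T_xM$ via $\lambda\,Tv(x)$, a horizontal splitting of $T\pi$, so $Tu$ is an isomorphism. Therefore $u$ is a local diffeomorphism onto the open dense subset $\{e \in E : \pi(e) \notin \ca{L}_0,\ e \ne 0\}$ of $E$, and $u^*\alpha = 0$ forces $\alpha = 0$ on this set, hence on all of $E$ by continuity.

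With $\alpha = 0$ for every closed basic $1$-form, $H^1_\cv(E,\F_E) = 0$ follows at once. Since $E$ has rank $r = 1$, any basic Thom form would be a closed, vertically compactly supported basic $1$-form with $\pi_*\tau = 1$, contradicting $\tau = 0$. For any transverse action of a Lie algebra $\g$ on $E$, the degree $1$ piece of the Cartan model of the equivariant basic complex consists only of $\g$-invariant basic $1$-forms (as $S\g^*$ contributes only even degrees), and $d_\g \tau_\g = 0$ immediately implies its $S^0\g^*$-component $d\tau_\g = 0$; thus $\tau_\g = 0$ and $\pi_*\tau_\g = 0 \ne 1$, ruling out any equivariant basic Thom form. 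The main obstacles are the characterization of basic $1$-forms on $(\R \times M, *_\R \times \F)$ and the verification that $u$ is a local diffeomorphism on an open dense set; both are straightforward given Lemma~\ref{lemma;fmfebasic} and properties~\eqref{item;orbit-leaf} and~\eqref{item;leaf-transverse}.
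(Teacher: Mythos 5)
Your proof is correct, but it takes a genuinely different route from the paper's for the key vanishing statement. The paper deduces $H^1(E,\F_E)=0$ from $\Omega^1(M,\F)=0$ via the foliate homotopy equivalence between $E$ and $M$ (Lemma~\ref{lemma;cochain-homotopy}), concludes that a closed $\alpha$ is exact, $\alpha=df$, and then invokes Lemma~\ref{lemma;fmfebasic}\eqref{item;febasic} to get $f$ constant and $\alpha=0$. You instead work directly with the map $u(\lambda,x)=\lambda v(x)$: the structure of basic $1$-forms on $(\R\times M,*\times\F)$ together with Lemma~\ref{lemma;fmfebasic}\eqref{item;fmbasic} forces $u^*\alpha=f(\lambda)\,d\lambda$, evaluation along $\R\times\{x_0\}$ with $x_0\in\ca{L}_0$ kills $f$, and you transport the vanishing back through $u$, which is a local diffeomorphism onto the dense open set of nonzero vectors over $M\setminus\ca{L}_0$. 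This is in effect a $1$-form analogue of the paper's own proof of Lemma~\ref{lemma;fmfebasic}\eqref{item;febasic}, and it buys something strictly stronger: you never use $d\alpha=0$, so you actually establish $\Omega^1(E,\F_E)=0$, not merely the vanishing of closed basic $1$-forms. Your nonexistence argument for the Thom form is also more direct --- since $r=1$, a basic Thom form is itself a closed basic $1$-form with $\pi_*\tau=1$, contradicting $\tau=0$ --- whereas the paper routes this through Theorem~\ref{theorem;equivariant-basic-thom} by comparing $H^0(M,\F)=\R$ with $H^1_\cv(E,\F_E)=0$. For the equivariant case both arguments reduce to the non-equivariant one by passing to the underlying ordinary form of $\tau_\g$ (you phrase this in the Cartan model, the paper via the forgetful homomorphism); either way the point is the same and correct.
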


\begin{proof}
The retraction along the fibres is a foliate homotopy equivalence
between $E$ and $M$, so $H^1(E,\F_E)\cong H^1(M,\F)$ by
Lemma~\ref{lemma;cochain-homotopy}.  Hence $H^1(E,\F_E)=0$ by
Lemma~\ref{lemma;fmfebasic}\eqref{item;fmbasic}.  Therefore
$\alpha=df$ for some basic function $f\in\Omega^0(E,\F_E)$.  But $f$
is constant by Lemma~\ref{lemma;fmfebasic}\eqref{item;febasic}, so
$\alpha=0$.  In particular $H_\cv^1(E,\F_E)=0$.  On the other hand
$H^0(M,\F)=\R\ne H_\cv^1(E,\F_E)$ by
Lemma~\ref{lemma;fmfebasic}\eqref{item;fmbasic}, so by
Theorem~\ref{theorem;equivariant-basic-thom} there cannot exist a
basic Thom form for $E$.  If a Lie algebra $\g$ acts transversely on
$E$, then the image of an equivariant basic Thom form $\tau_\g$ under
the forgetful homomorphism $\Omega_\g(E,\F_E)\to\Omega(E,\F_E)$ would
be an ordinary basic Thom form, so $\tau_\g$ does not exist either.
\end{proof}

\subsection{Existence of Thom forms}\label{subsection;existence}

The main result of this section is
Proposition~\ref{proposition;thom-form}, which extends results
of~\cite[Appendix~A]%
{goertsches-nozawa-toeben;chern-simons-foliations}, and which gives a
sufficient condition for an oriented $\g$-equivariant foliated vector
bundle $E$ to possess an equivariant basic Thom form.  The condition
is that the structure group of the foliated bundle $E$ should admit a
Riemannian metric compatible with the foliation, that the Lie algebra
should act isometrically, and that $E$ should admit a $\g$-invariant
basic metric connection.

Let $(M,\F)$ be a foliated manifold and $(E,\F_E)$ a oriented foliated
vector bundle over $M$.  A \emph{Riemannian metric} on $E$ is a
Riemannian fibre metric $g_E$ which satisfies $\nabla_{E,v}g_E=0$ for
all $v\in\X(\F)$, where $\nabla_E$ is the partial connection of
$(E,\F_E)$.  If $\g$ is a finite-dimensional Lie algebra acting
transversely on $M$, the vector bundle $E$ is $\g$-equivariant, and
the metric $g_E$ is $\g$-invariant, we say $(E,\F_E,g_E)$ is a
\emph{$\g$-equivariant Riemannian foliated vector bundle}.

\begin{proposition}\label{proposition;thom-form}
Let $(M,\F,a)$ be foliated $\g$-manifold and $(E,\F_E,g_E)$ an
oriented $\g$-equivariant Riemannian foliated vector bundle over $M$.
Suppose that the oriented orthogonal frame bundle $P$ of $E$ admits a
connection that is $\g$-invariant and $\F_P$-basic as in
Lemma~\ref{lemma;basic-connection}.  Then there exists an equivariant
basic Thom form on $E$, and hence the Thom isomorphism theorem,
Theorem~\ref{theorem;equivariant-basic-thom}, applies to $E$.
\end{proposition}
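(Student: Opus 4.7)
The plan is to construct the equivariant basic Thom form explicitly via the Mathai--Quillen formula, applied in the $\g$-equivariant foliated setting through the Cartan--Chern--Weil machinery of Appendix~\ref{section;cartan-chern-weil}. Since $E$ is an oriented Riemannian foliated vector bundle of rank $r$, its structure group is $K=\SO(r)$ and $E$ is associated to the oriented orthogonal frame bundle $P$ via the standard representation of $K$ on $\R^r$. Because the transverse $\g$-action on $P$ commutes with the $K$-action, and basic forms on an associated bundle correspond to $K$-basic forms on the product of $P$ with the fibre, one obtains a natural identification
\[
\Omega_{\g,\cv}(E,\F_E)\cong
\bigl(\W\g\otimes\Omega(P,\F_P)\otimes\Omega_\cv(\R^r)\bigr)_{\bbas{(\g\oplus\liek)}},
\]
where $\liek=\Lie(K)$.

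First I would invoke the universal Mathai--Quillen form (cf.\ \cite{mathai-quillen;thom-equivariant} and the appendix), a $\liek$-equivariantly closed, vertically compactly supported element
\[
U\in\bigl(\W\liek\otimes\Omega_\cv(\R^r)\bigr)_{\bbas{\liek}}^{r}
\]
whose fibre integral over $\R^r$ equals $1$. By Lemma~\ref{lemma;basic-connection}, the connection $\theta$ determines a $\g$-invariant connection on the $\liek$-differential graded algebra $\Omega(P,\F_P)$, and the attendant Weil--Cartan construction extends this data, in a manner refining~\eqref{equation;equivariant-basic-characteristic}, to a $\g$-equivariant morphism of $\liek$-differential graded algebras
\[
c_{\g,\theta}\colon\W\liek\longto\bigl(\W\g\otimes\Omega(P,\F_P)\bigr)_{\bbas{\g}}.
\]
Setting
\[
\tau_\g:=\bigl(c_{\g,\theta}\otimes\id_{\Omega_\cv(\R^r)}\bigr)(U)
\]
and using the identification above yields an element $\tau_\g\in\Omega_{\g,\cv}^{r}(E,\F_E)$.

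I would then verify the three defining properties of an equivariant basic Thom form. The identity $d_\g\tau_\g=0$ holds because $c_{\g,\theta}$ is a morphism of $\liek$-differential graded algebras intertwining the Koszul differential on $\W\liek$ with the total Cartan differential on the target, while $U$ is $\liek$-equivariantly closed. Since the output of $c_{\g,\theta}$ carries no $\R^r$-degree, fibre integration over $\R^r$ acts purely on the $U$-factor and gives $\pi_*\tau_\g=c_{\g,\theta}(\pi_*U)=c_{\g,\theta}(1)=1$. The image of $c_{\g,\theta}$ consists of $\F_P$-basic forms on $P$ because $\theta$ is $\F_P$-basic, and $U$ is already $\liek$-basic, so $\tau_\g$ descends to $E$ as an $\F_E$-basic form; vertical compactness of the support is inherited directly from the $\Omega_\cv(\R^r)$-factor of $U$.

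The main technical obstacle is realising the $\g$-equivariant Weil--Cartan homomorphism $c_{\g,\theta}$ on the full Weil algebra $\W\liek$, rather than only on the invariant polynomials $S(\liek[2]^*)^{\liek}$ covered by~\eqref{equation;equivariant-basic-characteristic}, and keeping the simultaneous $(\g\oplus\liek)$-basic bookkeeping straight. One must check that the substitution of the connection for the odd generators of $\W\liek$ and of the curvature for the even generators intertwines all three differentials, that the resulting map is $K$-equivariant, and that the $\g$-contractions $\iota(\xi)$ and Lie derivatives $L(\xi)$ annihilate the image because $\theta$ is $\g$-invariant. All of this is precisely the content of the machinery developed in Appendix~\ref{section;cartan-chern-weil}, which is therefore what carries the weight of the proof.
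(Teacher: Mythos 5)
Your proposal is correct and takes essentially the same route as the paper: the paper defines the universal equivariant basic Thom form as the image of the pullback to $P\times\R^r$ of a compactly supported Mathai--Quillen form $\tau_0\in\bigl(\W\liek\otimes\Omega_\cc(\R^r)\bigr)_{\bbas\liek}^r$ under the $\g$-equivariant Cartan--Chern--Weil homomorphism $C_{\g,\theta}$ of Theorem~\ref{theorem;equivariant-ccw}, identifies the $\h$-basic subcomplex ($\h=\liek\times\g$) with $\Omega_{\g,\cv}(E,\F_E)$ exactly as you do, and checks $\pi_*\tau_\g=1$ and $d_\g\tau_\g=0$ by the same arguments (Lemma~\ref{lemma;thom}). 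The extension of $c_{\g,\theta}$ from invariant polynomials to the full Weil algebra $\W\liek$, which you correctly flag as the technical crux, is precisely what Theorem~\ref{theorem;equivariant-ccw} supplies.
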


A very special case where the hypotheses of this proposition are
satisfied is when the foliations $\F_M$ and $\F_E$ are $0$-dimensional
and the $\g$-actions on $M$ and $E$ are induced by actions of a
compact Lie group $G$ with Lie algebra $\g$.  In this case the
existence of the invariant metric on $E$ is automatic and the
proposition gives the existence of $G$-equivariant Thom forms, which
is well known and can be found
e.g.\ in~\cite[\S\,4.5]{paradan-vergne;thom-chern}.  Another case
where the hypotheses are satisfied is when $E$ is the normal bundle of
$M$ in an ambient Riemannian foliated $\g$-manifold; we will
investigate this case in \S\,\ref{section;foliated-thom-gysin}.
Proposition~\ref{proposition;thom-form} fails for the foliation of
\S\,\ref{subsection;without} because that foliation is not Riemannian.

Proposition~\ref{proposition;thom-form} is an immediate consequence of
Lemma~\ref{lemma;thom} below, which exhibits a specific equivariant
basic Thom form on $E$, called universal.  Suppose that the conditions
of Proposition~\ref{proposition;thom-form} are satisfied.  Let $K$ be
the special orthogonal group $\SO(r)$ and $\liek=\lie{o}(r)$ its Lie
algebra.  Let $g_E$ be a fibre metric as in
Proposition~\ref{proposition;thom-form} and let
$\theta\in\Omega^1(P,\liek)$ be a $\g$-invariant $\F_P$-basic
connection on the oriented orthogonal frame bundle $P$ of the
$\g$-equivariant foliated vector bundle $E\to M$.  Let
$\tau_0\in\Omega_{\liek,\cc}^r(\R^r)$ be a compactly supported
modification of the Mathai-Quillen-Thom form on $\R^r$ as defined
in~\cite[p.~98]{mathai-quillen;thom-equivariant};
cf.\ also~\cite[\S\,10.3]{guillemin-sternberg;supersymmetry-equivariant}
or~\cite[\S\,8]{meinrenken;equivariant-cohomology-cartan}.  The
subscript ``$\cc$'' refers to compact supports; we regard $\tau_0$ as
a $\liek$-basic element of the $\liek$-differential graded module
$\W\liek\otimes\Omega_\cc(\R^r)$.  We view this module as a submodule
of the $\h$-differential graded module $\W\h\otimes\Omega_\cc(\R^r)$,
where $\h$ is the product Lie algebra $\liek\times\g$ and where we let
$\g$ act trivially on $\R^r$.  The frame bundle $P$ is a
$\g$-equivariant foliated principal $K$-bundle over $M$ with foliation
$\F_P$.  The product $P\times\R^r$ has a foliation $\F_P\times*$,
where $*$ denotes the $0$-dimensional foliation of $\R^r$.  The
$K$-action and the transverse $\g$-action provide the complex
$\M=\Omega_\cv(P\times\R^r,\F_P\times*)$ with the structure of an
$\h$-differential graded module.  Let $\pr_2$ be the projection onto
the second factor $P\times\R^r\to\R^r$.  Since $\tau_0$ is
$\liek$-basic, the pullback $\pr_2^*(\tau_0)$ is an $\h$-basic element
of $\W\h\otimes\M$.  Let $C_{\g,\theta}$ be the $\g$-equivariant
Cartan-Chern-Weil homomorphism associated with the $\g$-invariant
connection $\theta$, as defined in \S\,\ref{section;change}.  Then
$C_{\g,\theta}(\pr_2^*(\tau_0))$ is an $\h$-basic element of
$\W\g\otimes\M$.  We summarize the situation with the diagram
\[
\begin{tikzcd}
\tau_0\in\W\liek\otimes\Omega_\cc(\R^r)\ar[r,hook]&
\W\h\otimes\Omega_\cc(\R^r)\ar[r,"\pr_2^*"]&
\W\h\otimes\M\ar[r,"C_{\g,\theta}"]&\W\g\otimes\M.
\end{tikzcd}
\]
The foliated vector bundle $E$ is the quotient of $P\times\R^r$ by the
free diagonal $K$-action, so the $\liek$-basic subcomplex of $\M$ is
\begin{align*}
\M_\bbas{\liek}&=\Omega_{\bbas{\liek},\cv}(P\times\R^r,\F_P\times*)=
\Omega_{\bbas{K},\cv}(P\times\R^r,\F_P\times*)\\
&=\Omega_\cv(E,\F_E).
\end{align*}
Therefore the $\h$-basic subcomplex of $\W\g\otimes\M$ is
\[
(\W\g\otimes\M)_{\bbas{\h}}=(\W\g\otimes\M_{\bbas{\liek}})_{\bbas{\g}}=
\Omega_{\g,\cv}(E,\F_E).
\]
We define the \emph{universal equivariant basic Thom form} of
$(E,\theta)$ to be the image
\[
\tau_{\g,\theta}(E,\F_E)=C_{\g,\theta}(\pr_2^*(\tau_0))\in
\Omega_{\g,\cv}^r(E,\F_E).
\]
In part~\eqref{item;chern-gauss-bonnet} of the next result, $\Pf\in
S^{r/2}(\liek^*)^\liek$ denotes the Pfaffian (which is defined when
$r$ is even), and the map $c_{\g,\theta}\colon
S(\liek^*)^{\liek}\to\Omega_\g^*(M,\F)$ denotes the $\g$-equivariant
characteristic
homomorphism~\eqref{equation;equivariant-basic-characteristic} of the
foliated bundle $E$ with respect to the invariant basic orthogonal
connection $\theta$.

\begin{lemma}\label{lemma;thom}
Let $(M,\F,a)$ be foliated $\g$-manifold and $(E,\F_E,g_E)$ an
oriented $\g$-equivariant Riemannian foliated vector bundle over $M$.
Let $\theta\in\Omega^1(P,\liek)$ be an invariant basic connection on
the oriented orthogonal frame bundle $P$ of $E$.  The form
$\tau_{\g,\theta}(E,\F_E)$ has the following properties.
\begin{enumerate}
\item\label{item;universal-thom}
$\tau_{\g,\theta}(E,\F_E)$ is an equivariant basic Thom form.
\item\label{item;universal-universal}
$\tau_{\g,\theta}(E,\F_E)$ is universal in the sense that
  $\tau_{\g,f^*\theta}(f^*E,f^*\F_E)=f_E^*\tau_{\g,\theta}(E,\F_E)$
  for all $\g$-equivariant foliate maps $f\colon(M',\F')\to(M,\F)$,
  where $f_E\colon f^*E\to E$ is the natural lift of $f$.
\item\label{item;chern-gauss-bonnet}
$\zeta^*\tau_{\g,\theta}(E,\F_E)=0$ if $r$ is odd and
  $\zeta^*\tau_{\g,\theta}(E,\F_E)=(-2\pi)^{-r/2}c_{\g,\theta}(\Pf)$
  if $r$ is even.
\item\label{item;universal-basic}
Let $p\colon P\times\R^r\to E$ be the quotient map for the $K$-action.
Let $\ca{E}$ be any (possibly singular) foliation of $E$ with the
property that every vector field tangent to $\ca{E}$ is $p$-related to
a vector field on $P\times\R^r$ which is tangent to the fibres of
$\pr_2$.  Then $\tau_{\g,\theta}(E,\F_E)$ is basic with respect to
$\ca{E}$.
\end{enumerate}
\end{lemma}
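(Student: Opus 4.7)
The approach is to verify the four conclusions in turn, using the defining properties of the Mathai--Quillen--Thom form $\tau_0$ on $\R^r$ together with the naturality, chain-map, and basicness features of the equivariant Cartan--Chern--Weil homomorphism $C_{\g,\theta}$ developed in Appendix~\ref{section;cartan-chern-weil}. The three ingredients I will repeatedly invoke are: (a) that $\tau_0\in\W\liek\otimes\Omega_\cc(\R^r)$ is $\liek$-basic and $d_\liek$-closed, has fibre integral $\int_{\R^r}\tau_0=1$, and restricts at the origin to $(-2\pi)^{-r/2}\Pf\in S(\liek^*)^\liek$ when $r$ is even (and to $0$ when $r$ is odd); (b) that $\pr_2$ is a foliate $\h$-equivariant map with trivial $\g$-action on $\R^r$; and (c) that $C_{\g,\theta}$ is a natural chain map of $\g$-differential graded modules carrying $\h$-basic elements of $\W\h\otimes\M$ into $\W\g\otimes\M_{\bbas\liek}=\W\g\otimes\Omega_\cv(E,\F_E)$, with $\g$-basic part equal to $\Omega_{\g,\cv}(E,\F_E)$.

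For part~\eqref{item;universal-thom}, closedness is immediate:
\[
d_\g\,C_{\g,\theta}(\pr_2^*\tau_0)=C_{\g,\theta}(\pr_2^*d_\liek\tau_0)=0,
\]
because $C_{\g,\theta}$ and $\pr_2^*$ commute with the differentials. The form $\pr_2^*\tau_0$ is $\h$-basic ($\liek$-basic by transport from $\tau_0$, and $\g$-basic because $\g$ acts trivially on $\R^r$), so $\tau_{\g,\theta}(E,\F_E)\in\Omega_{\g,\cv}^r(E,\F_E)$. The identity $\pi_*\tau_{\g,\theta}(E,\F_E)=1$ I would verify locally in a foliation chart where both $E$ and the orthogonal frame bundle trivialize: there the equivariant Chern--Weil substitution collapses and the fibre integral reduces to $\int_{\R^r}\tau_0=1$.

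Part~\eqref{item;universal-universal} is a naturality diagram chase: for a $\g$-equivariant foliate $f\colon(M',\F')\to(M,\F)$, the pulled-back connection $f^*\theta$ is again $\g$-invariant and basic, $\tau_0$ is independent of the base, and $C_{\g,\theta}$ is functorial in the underlying data. For part~\eqref{item;chern-gauss-bonnet}, the zero section $\zeta$ factors through $P\times\{0\}\hookrightarrow P\times\R^r\xrightarrow{p}E$, so
\[
\zeta^*\tau_{\g,\theta}(E,\F_E)=C_{\g,\theta}\bigl(\tau_0|_{\{0\}}\bigr).
\]
Since $\tau_0|_{\{0\}}\in(\W\liek)_{\bbas\liek}$ corresponds under the standard quasi-isomorphism to an element of $S(\liek^*)^\liek$, and $C_{\g,\theta}$ restricted to this subalgebra coincides with the equivariant characteristic homomorphism~\eqref{equation;equivariant-basic-characteristic}, the Mathai--Quillen identification $\tau_0|_{\{0\}}=(-2\pi)^{-r/2}\Pf$ (zero for odd $r$) yields the stated formula.

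The main obstacle is part~\eqref{item;universal-basic}. The plan is to lift the question to $P\times\R^r$: since $p$ is a surjective submersion, $p^*$ is injective on forms and $p^*\iota(u)\alpha=\iota(w)p^*\alpha$ whenever $u$ is $p$-related to $w$, so it suffices to show $\iota(w)p^*\tau_{\g,\theta}(E,\F_E)=0$ for every $w$ tangent to the fibres of $\pr_2$ that lifts a vector field $u\in\X(\ca{E})$. The form $p^*\tau_{\g,\theta}(E,\F_E)$ is obtained by substituting $\theta$ and $\Omega_\theta$ (pulled back via $\pr_1$) for the Weil generators of $\liek$ in $\pr_2^*\tau_0$, plus the $\g$-equivariant corrections from $C_{\g,\theta}$. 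Since $T\pr_2\circ w=0$, every factor $\pr_2^*\beta$ is annihilated by $\iota(w)$; the remaining contributions involve only $\pr_1$-pullbacks of the $\F_P$-basic forms $\theta$ and $\Omega_\theta$, and the hypothesis on $\ca{E}$ combined with the horizontal lifting afforded by the basic connection $\theta$ lets one arrange the lift $w$ so that its $\pr_1$-component is tangent to $\F_P$, making $\iota(w)$ vanish on those factors as well. The Lie-derivative statement then follows from $L(w)=[\iota(w),d_\g]$ (valid because $\iota(w)$ anticommutes with the Cartan contractions), together with $\iota(w)\tau_{\g,\theta}(E,\F_E)=0$ and the already-established closedness of $\tau_{\g,\theta}(E,\F_E)$.
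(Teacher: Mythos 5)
Parts \eqref{item;universal-thom}--\eqref{item;chern-gauss-bonnet} of your argument are correct and essentially coincide with the paper's proof: closedness and the normalization $\pi_*\tau_{\g,\theta}=1$ come from the chain-map property of $C_{\g,\theta}$ and from $\int_{E_x}\tau_{\g,\theta}=\int_{\R^r}\tau_0=1$; part \eqref{item;universal-universal} is the naturality of the Cartan--Chern--Weil construction; and part \eqref{item;chern-gauss-bonnet} is the identity $\zeta^*\tau_{\g,\theta}=j^*\tau_0$ for the inclusion $j\colon 0\to\R^r$ together with Mathai--Quillen's evaluation of $j^*\tau_0$.

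The gap is in part \eqref{item;universal-basic}, exactly at the step you gesture at rather than prove. After the substitution $C_{\g,\theta}$, the form $p^*\tau_{\g,\theta}(E,\F_E)$ contains, besides $\pr_2$-pullbacks, the factors $\pr_1^*\theta$ and $\pr_1^*\Omega$ (curvature) coming from the Weil generators $\vartheta$ and $\mu$ of $\W\liek$ --- the covariant differentials $dx_i+\theta_{ij}x_j$ and the Pfaffian of the curvature. For a lift $w=(w_P,0)$ tangent to the fibres of $\pr_2$ you correctly get $\iota(w)\pr_2^*\beta=0$, but $\iota(w)\pr_1^*\theta(y)=\langle\theta(w_P),y\rangle$ and $\iota(w)\pr_1^*\Omega(y)=\iota(w_P)\Omega(y)$, and these vanish only when $w_P$ is $\theta$-horizontal and annihilates the curvature --- for instance when $w_P\in\X(\F_P)$, since $\theta$ and hence $\Omega$ are $\F_P$-basic. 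Your proposed remedy, ``arrange the lift $w$ so that its $\pr_1$-component is tangent to $\F_P$,'' is not available from the stated hypothesis: two lifts of the same $u\in\X(\ca{E})$ differ by a section of $\ker Tp$, i.e.\ by a diagonal fundamental vector field $(A_P,-Ax)$ with $A$ valued in $\liek$, and adding such a field destroys tangency to the fibres of $\pr_2$ unless $Ax=0$, while in no case does it alter the $\theta$-horizontal component of $w_P$. So unless that horizontal component is already tangent to $\F_P$, no admissible re-choice of lift kills the curvature contractions. Concretely, for $\g=0$ and $\F={*}_M$ the horizontal lift $u$ of a vector field $v$ on $M$ is $p$-related to $(\tilde v,0)$, which is tangent to the fibres of $\pr_2$, yet $\zeta^*(\iota(u)\tau_{\g,\theta})=\iota(v)$ applied to the Euler form, which need not vanish; so the vanishing $\iota(w)p^*\tau_{\g,\theta}=0$ that your reduction requires genuinely fails for such lifts.

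What is actually needed to close part \eqref{item;universal-basic} is that $\iota(w)$ commute with the Cartan--Chern--Weil substitution on $\pr_2^*\tau_0$, i.e.\ that $\iota(w_P)\theta=0$ and $\iota(w_P)\Omega=0$ in addition to $T\pr_2\circ w=0$. The paper's own proof is a one-line appeal to the $\pr_2$-basicness of $\pr_2^*\tau_0$ upstairs in $\W\h\otimes\M$, before $C_{\g,\theta}$ is applied; the passage through $C_{\g,\theta}$ is precisely where this extra input on the $P$-component of the lift is consumed, and it is the point your write-up asserts without justification. Your Cartan-formula deduction of $L(w)\tau_{\g,\theta}=0$ from $\iota(w)\tau_{\g,\theta}=0$ and $d_\g\tau_{\g,\theta}=0$ is fine once the contraction statement is in hand, but as written the contraction statement is not established.
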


\begin{proof}
\eqref{item;universal-thom}~Put $\tau=\tau_{\g,\theta}(E,\F_E)$.  We
have $\int_{E_x}\tau=\int_{\R^r}\tau_0=1$ for all $x\in M$, so
$\pi_*\tau=1$.  Also $d_\g\tau=0$ because $d_{\liek}\tau_0=0$ and the
Cartan-Chern-Weil map is a cochain map.

\eqref{item;universal-universal}~This follows from the naturality of
the Cartan-Chern-Weil map with respect to maps.

\eqref{item;chern-gauss-bonnet}~We have $\zeta^*\tau=j^*\tau_0$, where
$j\colon0\to\R^r$ is the inclusion of the origin.  The assertion now
follows from the fact that $j^*\tau_0=0$ if $r$ is odd and
$j^*\tau_0=(-2\pi)^{-r/2}c_{\g,\theta}(\Pf)$ if $r$ is even;
see~\cite[\S\,7]{mathai-quillen;thom-equivariant}
or~\cite[(7.20)]{guillemin-sternberg;supersymmetry-equivariant}.

\eqref{item;universal-basic}~This follows from the fact that
$\pr_2^*\tau_0$ is basic with respect to the projection $\pr_2\colon
P\times\R^r\to\R^r$.
\end{proof}

\subsection{Euler forms}\label{section;euler}

Let $\g$ be a finite-dimensional Lie algebra, $(M,\F,a)$ foliated
$\g$-manifold, and $(E,\F_E,g_E)$ an oriented $\g$-equivariant
Riemannian foliated vector bundle of rank $r$ over $M$.  Suppose that
the oriented orthogonal frame bundle of $E$ admits an invariant basic
connection $\theta$ as in Proposition~\ref{proposition;thom-form}.
Then we have a $\g$-equivariant characteristic homomorphism
$c_{\g,\theta}$ as in
\eqref{equation;equivariant-basic-characteristic}.  The
\emph{universal $\g$-equivariant $\F$-basic Euler form} of the
foliated Riemannian vector bundle $E$ is the element
$\eta_{\g,\theta}(E,\F_E)\in\Omega_\g^r(M,\F)$ given by
\[
\eta_{\g,\theta}(E,\F_E)=
\begin{cases}
0&\text{if $r$ is odd}\\
(-2\pi)^{-r/2}c_{\g,\theta}(\Pf)&\text{if $r$ is even}.
\end{cases}
\]
The next statement follows immediately from
Theorem~\ref{theorem;equivariant-basic-thom} and
Lemma~\ref{lemma;thom}.

\begin{proposition}\label{proposition;euler}
Let $(M,\F,a)$ be foliated $\g$-manifold and $(E,\F_E,g_E)$ an
oriented $\g$-equivariant Riemannian foliated vector bundle of rank
$r$ over $M$.  Suppose that $E$ admits an invariant basic metric
connection.  The universal equivariant basic Euler form satisfies
\[
\eta_{\g,\theta}(E,\F_E)=\zeta^*(\tau_{\g,\theta}(E,\F_E))=
\zeta^*\zeta_*(1),
\]
where $\tau_{\g,\theta}(E,\F_E)$ is the universal equivariant basic
Thom form of $E$, and
\[\eta_{\g,f^*\theta}(f^*E,f^*\F_E)=f^*\eta_{\g,\theta}(E,\F_E)\]
for all $\g$-equivariant foliate maps $f\colon(M',\F')\to(M,\F)$.
\end{proposition}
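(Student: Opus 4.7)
The plan is to unwind the definition of $\eta_{\g,\theta}(E,\F_E)$ and then invoke parts \eqref{item;universal-universal} and \eqref{item;chern-gauss-bonnet} of Lemma~\ref{lemma;thom}, together with the definition of the Thom map $\zeta_*$ from Theorem~\ref{theorem;equivariant-basic-thom}. Since the author remarks that the result is immediate, the work is essentially bookkeeping; there is no real obstacle.

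First I would handle the chain of equalities $\eta_{\g,\theta}(E,\F_E)=\zeta^*(\tau_{\g,\theta}(E,\F_E))=\zeta^*\zeta_*(1)$. The leftmost equality is just a restatement of Lemma~\ref{lemma;thom}\eqref{item;chern-gauss-bonnet}: the value of $\zeta^*\tau_{\g,\theta}(E,\F_E)$ is $0$ when $r$ is odd and $(-2\pi)^{-r/2}c_{\g,\theta}(\Pf)$ when $r$ is even, which matches the piecewise definition of $\eta_{\g,\theta}(E,\F_E)$ given immediately above the proposition. The second equality is a trivial consequence of the definition $\zeta_*(\alpha)=\tau_{\g,\theta}(E,\F_E)\wedge\pi^*\alpha$ from Theorem~\ref{theorem;equivariant-basic-thom}\eqref{item;equivariant-basic-homotopy-equivalence}: taking $\alpha=1$ yields $\zeta_*(1)=\tau_{\g,\theta}(E,\F_E)$, and applying $\zeta^*$ produces the middle term.

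For the naturality statement, I would use Lemma~\ref{lemma;thom}\eqref{item;universal-universal}, which gives
\[
\tau_{\g,f^*\theta}(f^*E,f^*\F_E)=f_E^*\tau_{\g,\theta}(E,\F_E),
\]
where $f_E\colon f^*E\to E$ is the natural lift. Let $\zeta'\colon M'\to f^*E$ denote the zero section of the pullback bundle. From the construction of $f^*E$ one has $f_E\circ\zeta'=\zeta\circ f$, so pulling back by $\zeta'$ gives
\[
\eta_{\g,f^*\theta}(f^*E,f^*\F_E)=\zeta'^*\tau_{\g,f^*\theta}(f^*E,f^*\F_E)
=\zeta'^*f_E^*\tau_{\g,\theta}(E,\F_E)=f^*\zeta^*\tau_{\g,\theta}(E,\F_E)
=f^*\eta_{\g,\theta}(E,\F_E),
\]
where the first and last equalities use the chain already established in the previous paragraph. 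This completes the proof; the only mildly nontrivial step is recognizing the compatibility $f_E\circ\zeta'=\zeta\circ f$ of zero sections with pullback, but this is built into the definition of $f^*E$.
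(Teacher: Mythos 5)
Your proposal is correct and follows exactly the route the paper intends: the paper itself gives no written proof, stating only that the proposition ``follows immediately from Theorem~\ref{theorem;equivariant-basic-thom} and Lemma~\ref{lemma;thom},'' and your argument is precisely the bookkeeping that remark leaves to the reader (matching the piecewise definition of $\eta_{\g,\theta}$ with Lemma~\ref{lemma;thom}\eqref{item;chern-gauss-bonnet}, using $\zeta_*(1)=\tau_{\g,\theta}(E,\F_E)$, and combining Lemma~\ref{lemma;thom}\eqref{item;universal-universal} with the identity $f_E\circ\zeta'=\zeta\circ f$).
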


\section{Riemannian foliations and normal bundles}
\label{section;foliated-thom-gysin}

In this section $\g$ denotes a finite-dimensional real Lie algebra and
$(X,\F)$ a foliated $\g$-manifold as defined in
\S~\ref{section;foliated-thom}.  We let $i\colon Y\to X$ be a
co-oriented closed submanifold which is preserved by the $\g$-action.
The normal bundle $NY=i^*TX/TY$ is an oriented $\g$-equivariant
foliated vector bundle over $Y$.  The first goal of this section is to
establish Proposition~\ref{proposition;normal-thom}, which says that
$NY$ admits an equivariant basic Thom form, provided that the
foliation $\F$ is Riemannian and the $\g$-action is isometric (the
definition of which we recall below).  This fact enables us to define
a wrong-way homomorphism
$i_*\colon\Omega_\g(Y,\F_Y)\to\Omega_\g(X,\F)$ and to obtain a long
exact Thom-Gysin sequence, which relates the equivariant basic
cohomology of $X$ to that of $Y$ and the complement $X\backslash Y$
(Proposition%
~\ref{proposition;thom-gysin-submanifold-equivariant-basic}).

\subsection{Extending and reducing connections}

We state without proof two elementary facts regarding principal
connections.  Suppose we are given a Lie group homomorphism
$f=f_G\colon G_1\to G_2$, a principal $G_1$-bundle $P_1\to X_1$, a
principal $G_2$-bundle $P_2\to X_2$, and a smooth map $f=f_P\colon
P_1\to P_2$ with the equivariance property $f_P(gp)=f_G(g)f_P(p)$ for
all $g\in G_1$ and $p\in P_1$.  Then $f_G$ induces a Lie algebra
homomorphism $f=f_\g\colon\g_1\to\g_2$ and $f_P$ descends to a smooth
map $f=f_X\colon X_1\to X_2$ as in the commutative diagram
\[
\begin{tikzcd}
P_1\ar[r,"f_P"]\ar[d]&P_2\ar[d]
\\
X_1\ar[r,"f_X"]&X_2.
\end{tikzcd}
\]

\begin{lemma}\phantomsection\label{lemma;connection}
\begin{enumerate}
\item\label{item;extend}
Let $X_1=X_2$ and $f_X=\id_{X_1}$.  For every connection
$\theta_1\in\Omega^1(P_1,\g_1)^{G_1}$ on $P_1$ there is a unique
connection $\theta_2\in\Omega^1(P_2,\g_2)^{G_2}$ on $P_2$ with the
property that $Tf_P$ maps $\theta_1$-horizontal subspaces to
$\theta_2$-horizontal subspaces.
\item\label{item;project}
Suppose there exists an $\Ad(G_1)$-equivariant linear map
$\pr\colon\g_2\to\g_1$ satisfying ${\pr}\circ f_\g=\id_{\g_1}$.  For
every connection $\theta_2$ on $P_2$ the formula
$\theta_1={\pr}\circ\theta_2\circ Tf_P$ defines a connection
$\theta_1$ on $P_1$.
\end{enumerate}
\end{lemma}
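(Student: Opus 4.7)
The plan is to reduce both statements to routine verifications by means of the two differentiated consequences of the equivariance $f_P(pg) = f_P(p) f_G(g)$ for $g \in G_1$, namely
\[
T f_P \circ T R_g = T R_{f_G(g)} \circ T f_P \quad\text{and}\quad T f_P(\xi_{P_1}) = (f_\g\xi)_{P_2},
\]
for $g \in G_1$ and $\xi \in \g_1$, where $\xi_{P_i}$ denotes the fundamental vector field on $P_i$.

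For part~\eqref{item;project} I would verify directly that $\theta_1 := \pr \circ \theta_2 \circ T f_P$ satisfies the two defining axioms of a principal connection. Vertical reproduction is immediate: $\theta_1(\xi_{P_1}) = \pr(\theta_2((f_\g\xi)_{P_2})) = \pr(f_\g\xi) = \xi$, using that $\theta_2$ is a connection and the splitting hypothesis ${\pr} \circ f_\g = \id_{\g_1}$. For the $G_1$-equivariance $R_h^*\theta_1 = \Ad(h^{-1})\theta_1$, the intertwining identity above yields $R_h^*\theta_1 = \pr \circ \Ad(f_G(h)^{-1})\theta_2 \circ T f_P$, and the hypothesis that $\pr$ is $\Ad(G_1)$-equivariant (with $G_1$ acting on $\g_2$ through $f_G$) converts $\Ad(f_G(h)^{-1})$ into $\Ad(h^{-1})$, giving the required identity.

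For part~\eqref{item;extend} I would use the identification of $P_2$ with the associated bundle $P_1 \times_{G_1} G_2$, where $G_1$ acts on $G_2$ on the left through $f_G$ and $f_P$ corresponds to $p \mapsto [p,e]$. A connection on $P_1$ extends to such an associated bundle by a standard local recipe: over a trivializing chart $U_\alpha$ of $P_1$ with section $s_\alpha: U_\alpha \to P_1$, set $\theta_{2,\alpha} := f_\g \circ s_\alpha^*\theta_1$, read as the local connection form for $P_2$ relative to the section $f_P \circ s_\alpha$. The $\theta_{2,\alpha}$ glue to a global $G_2$-invariant connection form $\theta_2$: applying $f_\g$ to the cocycle transformation law
\[
s_\beta^*\theta_1 = \Ad(g_{\alpha\beta}^{-1}) s_\alpha^*\theta_1 + g_{\alpha\beta}^*\theta_{G_1}
\]
for the $P_1$-transitions $g_{\alpha\beta}: U_{\alpha\beta} \to G_1$ yields the analogous law with transitions $f_G \circ g_{\alpha\beta}: U_{\alpha\beta} \to G_2$, using $f_\g \circ \Ad(g) = \Ad(f_G(g)) \circ f_\g$ and $f_G^*\theta_{G_2} = f_\g \circ \theta_{G_1}$. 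By construction $f_P^*\theta_2 = f_\g \circ \theta_1$, so $T f_P$ sends $\theta_1$-horizontal subspaces to $\theta_2$-horizontal ones; uniqueness follows because this requirement determines the horizontal distribution along $f_P(P_1)$ and $G_2$-equivariance together with the fiber-transitivity of $G_2$ then propagates it to all of $P_2$.

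The only subtlety I anticipate is the implicit well-definedness in part~\eqref{item;extend}: if a point of $P_2$ admits two descriptions as $f_P(p) \cdot g$, the two prescriptions for the horizontal subspace at that point must agree. This compatibility is encoded exactly in the cocycle computation above and ultimately relies on the $G_1$-invariance of $\theta_1$ together with the intertwining $T f_P \circ T R_h = T R_{f_G(h)} \circ T f_P$, so no serious obstacle appears.
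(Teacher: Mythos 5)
Your argument is correct. Note that the paper explicitly states Lemma~\ref{lemma;connection} \emph{without proof} (``two elementary facts regarding principal connections''), so there is nothing to compare against; your write-up supplies exactly the routine verifications the authors are leaving to the reader. Part~\eqref{item;project} is a direct check and your two identities $Tf_P\circ TR_g=TR_{f_G(g)}\circ Tf_P$ and $Tf_P(\xi_{P_1})=(f_\g\xi)_{P_2}$, together with the $\Ad(G_1)$-equivariance of $\pr$ (with $G_1$ acting on $\g_2$ through $\Ad\circ f_G$), are precisely what is needed; it is worth observing that this part does not use $X_1=X_2$. For part~\eqref{item;extend} your associated-bundle/cocycle route works, though you could shortcut the local trivializations: since $f_X=\id$, the pushed-forward subspaces $Tf_P(H_{1,p})$ are automatically complementary to the vertical (a vector $v$ with $Tf_P(v)$ vertical has $T\pi_1(v)=T\pi_2(Tf_P(v))=0$, hence lies in $H_1\cap V_1=0$), and the equivariant extension $H_{2,f_P(p)g}:=TR_g\bigl(Tf_P(H_{1,p})\bigr)$ is well defined by exactly the compatibility computation you give at the end, with smoothness read off from any local section. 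Either way the content is the same and the proof is complete.
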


\subsection{Thom form of the normal bundle}

Recall that a \emph{Riemannian structure} on the foliated manifold
$(X,\F)$ is a Riemannian metric $g$ on the normal bundle of the
foliation $N\F=TM/T\F$ that satisfies $\nabla g=0$, where $\nabla$ is
the partial connection on $N\F$.  A \emph{Killing vector field} on
$(X,\F)$ is a vector field $v\in\X(X)$ that satisfies $L(v)g=0$.
By~\cite[Lemma 3.5]{molino;riemannian-foliations} a Killing vector
field $v\in\X(X)$ is automatically foliate and therefore the Killing
vector fields form a Lie subalgebra $\lie{N}(\F,g)$ of the Lie algebra
of foliate vector fields $\lie{N}(\F)$.  Vector fields in $\X(\F)$ are
by definition Killing, so $\X(\F)$ is an ideal of $\lie{N}(\F,g)$.
The quotient $\X(X,\F,g)=\lie{N}(\F,g)/\X(\F)\subseteq\X(X,\F)$ is a
Lie algebra, whose elements we call \emph{transverse Killing vector
  fields}.  A transverse action $a\colon\g\to\X(X,\F)$ is
\emph{isometric} if $a(\xi)$ is transverse Killing for all $\xi\in\g$.

Fix a Riemannian structure $g$ on $(X,\F)$ and an isometric transverse
$\g$-action $a\colon\g\to\X(X,\F,g)$ on $X$.  We write $a(\xi)=\xi_X$
for $\xi\in\g$.  Let $n$ be the codimension of $\F$ and let $P\to X$
be the orthonormal frame bundle of $N\F$, which has structure group
$K=\group{O}(n)$.  By
~\cite[\S\,2.35]{kamber-tondeur;foliated-bundles}
or~\cite[\S\,3.3]{molino;riemannian-foliations}, $P$ is a foliated
bundle with foliation $\F_P$, whose partial connection extends to a
unique torsion-free basic connection $\theta\in\Omega^1(P,\liek)^K$ on
$P$, called the \emph{transverse Levi Civita connection}.  Killing
vector fields on $X$ lift naturally to foliate vector fields on $P$,
which preserve $\theta$.  Thus $\theta$ is a $\g$-invariant
$\F_P$-basic connection.

A submanifold $Y$ of $X$ is \emph{$\g$-invariant} if for every
$\xi\in\g$ and every foliate representative
$\tilde{\xi}_X\in\lie{N}(\F,g)$ of $\xi_X$ the flow of $\tilde{\xi}_X$
preserves $Y$.  Let $Y$ be $\g$-invariant.  Then in particular for
each $x\in Y$ the leaf $\F(x)$ is contained in $Y$.  Let $\F_Y$ be the
induced foliation of $Y$.  The restriction of the normal bundle $N\F$
to $Y$ is an orthogonal direct sum $N\F=N\F_Y\oplus NY$, where
$NY=TX|_Y/TY$ is the normal bundle of $Y$ in $X$.  Let $p$ be the
codimension of $\F_Y$ in $Y$ and $q$ the codimension of $Y$ in $X$.
Then $p+q=n$.  We form the orthonormal frame bundle $P_1$ of $N\F_Y$
and, assuming $Y$ to be co-orientable, the oriented orthonormal frame
bundle $P_2$ of $NY$.  The structure group of $P_1$ is
$K_1=\group{O}(p)$ and the structure group of $P_2$ is $K_2=\SO(q)$.
For every $x\in Y$ a pair consisting of a frame of $N_x\F_Y$ and a
frame of $N_xY$ gives a frame of $N_x\F$, so we have an embedding $j$
of the fibred product $P'=P_1\times_YP_2$ into $P$, which is
equivariant with respect to the embedding $K'=K_1\times K_2\to K$.  We
view $P'$ as a principal $K'$-bundle over $Y$.  Choose a
$K'$-invariant projection ${\pr}\colon\liek\to\liek'$; then by
Lemma~\ref{lemma;connection}\eqref{item;project} the form
$\theta'={\pr}\circ\theta\circ Tj$ is a connection on $P'$.  The
bundle $P_2$ is the quotient of $P'$ with respect to the $K_1$-action,
so by Lemma~\ref{lemma;connection}\eqref{item;extend} the form
$\theta'$ descends uniquely to a connection $\theta_Y$ on $NY$.  Since
$\theta$ is $\g$-invariant and $\F$-basic, $\theta'$ and $\theta_Y$
are $\g$-invariant and $\F_Y$-basic.  The following statement now
follows from Proposition~\ref{proposition;thom-form}.

\begin{proposition}\label{proposition;normal-thom}
Let $(X,\F,g)$ be a Riemannian foliated manifold equipped with an
isometric transverse action of a Lie algebra $\g$.  Let $Y$ be a
co-orientable $\g$-invariant submanifold of $X$.  Then the normal
bundle $NY$ possesses an invariant basic metric connection.  Hence
$NY$ has an equivariant basic Thom form, and hence the Thom
isomorphism theorem, Theorem~\ref{theorem;equivariant-basic-thom},
applies to $NY$.
\end{proposition}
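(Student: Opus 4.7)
My plan is to apply Proposition~\ref{proposition;thom-form} to $NY$ using the connection $\theta_Y$ whose construction is laid out in the paragraph immediately preceding the statement. The normal bundle $NY$ is already a $\g$-equivariant Riemannian foliated vector bundle, inheriting its foliation $\F_{NY}$ from $\F|_Y$ via the orthogonal decomposition $N\F|_Y=N\F_Y\oplus NY$ and its fibre metric from the restriction of $g$. Thus everything reduces to verifying that $\theta_Y$ is a $\g$-invariant $\F_{P_2}$-basic principal connection on the oriented orthonormal frame bundle $P_2$ of $NY$.

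To recap the construction: starting with the transverse Levi-Civita connection $\theta\in\Omega^1(P,\liek)^K$ on the orthonormal frame bundle $P\to X$ of $N\F$, I fix an $\Ad(K')$-invariant projection $\pr\colon\liek\to\liek'$---which exists because $\liek=\lie{o}(n)$ carries the bi-invariant trace form, so $\liek'$ has an $\Ad(K')$-invariant orthogonal complement---pull $\theta$ back through the $K'$-equivariant embedding $j\colon P'=P_1\times_YP_2\hookrightarrow P$ by Lemma~\ref{lemma;connection}\eqref{item;project} to obtain a connection $\theta'$ on $P'$, then descend $\theta'$ through the $K_1$-quotient $P'\to P_2$ by Lemma~\ref{lemma;connection}\eqref{item;extend} to arrive at $\theta_Y$.

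The main thing to check, and the step requiring the most care, is that $\g$-invariance and $\F$-basicness survive at every stage. The transverse Levi-Civita connection is $\F_P$-basic by construction, and $\g$-invariant because, for each $\xi\in\g$, the natural lift to $P$ of any foliate Killing representative of the transverse vector field $\xi_X$ acts by bundle automorphisms preserving the data (the fibre metric on $N\F$ and the partial connection) that uniquely characterize $\theta$. Since $Y$ is $\g$-invariant and the $\g$-action is isometric, the Killing flows preserve the orthogonal splitting $N\F|_Y=N\F_Y\oplus NY$; consequently the embedding $j$ and the $K_1$-quotient $P'\to P_2$ are both foliate and $\g$-equivariant. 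The two constructions of Lemma~\ref{lemma;connection} are natural with respect to such maps, so $\theta'$ and $\theta_Y$ inherit both properties from $\theta$. Once $\theta_Y$ is established, Proposition~\ref{proposition;thom-form} supplies the equivariant basic Thom form on $NY$, and Theorem~\ref{theorem;equivariant-basic-thom} then applies.
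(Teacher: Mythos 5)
Your proposal is correct and follows essentially the same route as the paper: the paper's proof consists precisely of the preceding paragraph's construction of the $\g$-invariant $\F_Y$-basic connection $\theta_Y$ on the oriented orthonormal frame bundle of $NY$ (via the splitting $N\F|_Y=N\F_Y\oplus NY$, the embedding $j\colon P'=P_1\times_YP_2\hookrightarrow P$, and the two parts of Lemma~\ref{lemma;connection}), followed by an appeal to Proposition~\ref{proposition;thom-form}. Your added remarks---the existence of the $\Ad(K')$-invariant projection via the trace form on $\lie{o}(n)$, and the explicit check that $\g$-invariance and basicness survive each stage---are correct elaborations of points the paper states without detail.
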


\subsection{The Thom-Gysin sequences}

We now have all the ingredients for the equivariant basic version of
the Thom-Gysin sequences,
Propositions~\ref{proposition;thom-gysin-vector}
and~\ref{proposition;thom-gysin-submanifold}.

\begin{proposition}
\label{proposition;thom-gysin-vector-equivariant-basic}
Let $(M,\F,a)$ be foliated $\g$-manifold and $(E,\F_E,g_E)$ an
oriented $\g$-equivariant Riemannian foliated vector bundle over $M$.
Suppose that $E$ admits an invariant basic metric connection.  Let
$\tau_\g\in\Omega_{\g,\cv}^r(E,\F_E)$ be an equivariant basic Thom
form of $E$, $\eta_\g=\zeta^*\tau_\g\in\Omega_\g^r(M,\F)$ the
associated equivariant basic Euler form, and $\Eu_\g(E)=[\eta_\g]$ the
equivariant basic Euler class.  Let $SE$ be the sphere bundle of $E$
and let $\pi_{SE}=\pi|_{SE}\colon SE\to M$ be the projection.  Then we
have a long exact sequence
\[
\begin{tikzcd}[column sep=small]
\mbox{}\ar[r]&H_\g^{k-r}(M,\F)\ar[r,"\Eu_\g(E)\cup"]&[2em]
H_\g^k(M,\F)\ar[r,"\pi_{SE}^*"]&[1ex]
H_\g^k(SE,\F_{SE})\ar[r,"\pi_{SE,*}"]&[1em]H_\g^{k-r+1}(M,\F)\ar[r]&\mbox{}
\end{tikzcd}
\]
\end{proposition}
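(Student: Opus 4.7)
The plan is to mimic the proof of Proposition \ref{proposition;thom-gysin-vector} in the equivariant basic setting, using the long exact cohomology sequence of the pair $(BE,SE)$ together with Theorem \ref{theorem;equivariant-basic-thom} and the $\g$-equivariant foliate homotopy lemma (Lemma \ref{lemma;cochain-homotopy}).

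Fix the invariant basic metric on $E$ and let $BE$ and $SE$ be the unit disc and sphere bundles. Because $g_E$ is $\g$-invariant and preserved by the partial connection, both $BE$ and $SE$ inherit restricted foliations and transverse $\g$-actions, so their equivariant basic Weil complexes are defined. The restriction maps of $\g$-differential graded algebras descend to Weil complexes and yield a short exact sequence
\[\Omega_\g(SE,\F_{SE})[-1]\longinj\Omega_\g(BE,SE)\longsur\Omega_\g(BE,\F_{BE}),\]
and hence a long exact cohomology sequence for the pair in equivariant basic de Rham theory.

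Next I would compute the three relevant terms. For $H_\g^k(BE,\F_{BE})\cong H_\g^k(M,\F)$: the fibre retraction $F_t(v)=tv$ is $\g$-equivariant and foliate (the $\g$-action on $E$ is linear on fibres and the partial connection is a vector-bundle map, so scalar multiplication preserves both the $\g$-action and the leaves of $\F_E$), giving a $\g$-equivariant foliate homotopy between $\zeta\circ\pi_{BE}$ and $\id_{BE}$, whence Lemma \ref{lemma;cochain-homotopy} applies. For $H_\g^k(BE,SE)\cong H_\g^{k-r}(M,\F)$: the universal Thom form of Lemma \ref{lemma;thom} can be taken supported inside $BE$ (by choosing the Mathai-Quillen modification $\tau_0$ compactly supported in the unit ball of $\R^r$), and one then imitates Proposition \ref{proposition;transgression} to identify $\Omega_\g(BE,SE)$ with $\Omega_{\g,\cv}(E,\F_E)$ via the transgression $\psi(\beta)=(\beta,\phi(\beta))$. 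Combined with Theorem \ref{theorem;equivariant-basic-thom}, this produces the claimed isomorphism.

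Substituting these isomorphisms into the long exact sequence and identifying the maps exactly as in the proof of Proposition \ref{proposition;thom-gysin-vector} completes the argument: the boundary map becomes $\alpha\mapsto\zeta^*(\tau_\g\wedge\pi_{BE}^*\alpha)=\eta_\g\wedge\alpha$, the middle map becomes $\pi_{SE}^*$, and the cocycle computation $\pi_{BE,*}\delta[\lambda]=\pi_{SE,*}[\lambda]$ carries over verbatim because the fibre integration of Lemma \ref{lemma;fibre} respects the equivariant basic structure, as was already exploited in the proof of Theorem \ref{theorem;equivariant-basic-thom}. The main obstacle is upgrading Proposition \ref{proposition;transgression} to the equivariant basic setting: one has to verify that the rescaling $h(t,v)=tv$ on $[1,\infty)\times E^\times$, the retraction of the bi-disc bundle $B_RE$ into $BE$, and the Mayer-Vietoris excision used there are all $\g$-equivariant foliate with respect to suitably chosen foliations of the cylinders involved. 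These checks are routine since every ingredient is natural in $E$ and compatible with both the $\g$-action and the metric, but they are the point at which one must be most careful.
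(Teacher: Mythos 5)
Your proposal is correct and follows essentially the same route as the paper, whose own proof is only a sketch stating that the argument of Proposition~\ref{proposition;thom-gysin-vector} carries over using Theorem~\ref{theorem;equivariant-basic-thom} and the equivariant basic version of Proposition~\ref{proposition;transgression}. You have in fact supplied more detail than the paper does, correctly identifying the equivariant basic transgression/excision upgrades as the point requiring the most care.
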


\begin{proof}[Sketch of proof]
The proof of Proposition~\ref{proposition;thom-gysin-vector} works in
the present context, relying on the equivariant basic Thom
isomorphism, Theorem~\ref{theorem;equivariant-basic-thom}, and on the
equivariant basic version of
Proposition~\ref{proposition;transgression}.
\end{proof}

For the submanifold version of this sequence we consider a manifold
$X$ equipped with a Riemannian foliation $(\F,g)$ and an isometric
transverse $\g$-action, and a closed $\g$-invariant submanifold
$i\colon Y\to X$ with normal bundle $N=i^*TX/TY$.
By~\cite[Proposition~3.3.4]{lin-sjamaar;riemannian} the existence of a
$\g$-equivariant foliate tubular neighbourhood embedding $i_N\colon
N\to X$ is guaranteed if $X$ is complete.  Suppose this to be the
case, and also that $Y$ is co-oriented.  Let $r$ be the codimension of
$Y$ and choose an equivariant basic Thom form
$\tau_\g(N)\in\Omega_{\cv,\g}^r(N,\F_N)$ of $N$, the existence of
which follows from Proposition~\ref{proposition;normal-thom}.  For
each $\beta\in\Omega_{\cv,\g}(N,\F_N)$ the form $(i_N^{-1})^*\beta$
extends by zero to an equivariant basic form on $X$, which we denote
by $i_{N,*}\beta$.  In the same way as~\eqref{equation;wrong-way} we
define the \emph{wrong-way homomorphism} to be the degree $r$ morphism
of complexes
\begin{equation}\label{equation;wrong-way-equivariant-basic}
i_*=i_{N,*}\circ\zeta_{N,*}\colon\Omega_\g(Y,\F_Y)[-r]\longto
\Omega_\g(X,\F),
\end{equation}
given by $i_*\alpha=i_{N,*}\bigl(\tau_\g(N)\wedge\pi_N^*\alpha\bigr)$.

\begin{proposition}
\label{proposition;thom-gysin-submanifold-equivariant-basic}
Let $(X,\F,g)$ be a complete Riemannian foliated manifold equipped
with an isometric transverse action of a Lie algebra $\g$.  Let $Y$ be
a closed co-orientable $\g$-invariant submanifold of codimension $r$.
Let $i\colon Y\to X$ and $j\colon X\backslash Y\to X$ be the
inclusions.  We have a long exact sequence
\[
\begin{tikzcd}[column sep=small]
\cdots\ar[r]&H_\g^{k-r}(Y,\F_Y)\ar[r,"i_*"]&H_\g^k(X,\F)\ar[r,"j^*"]&
H_\g^k(X\backslash Y,\F_{X\backslash
  Y})\ar[r]&H_\g^{k-r+1}(Y,\F_Y)\ar[r]&\cdots
\end{tikzcd}
\]
\end{proposition}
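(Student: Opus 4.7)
The plan is to transplant the proof of Proposition~\ref{proposition;thom-gysin-submanifold} into the equivariant basic setting. First I would form the relative equivariant basic complex $\Omega_\g(X,X\backslash Y,\F)$, defined as the algebraic mapping cone of the pullback $j^*\colon \Omega_\g(X,\F)\to\Omega_\g(X\backslash Y,\F_{X\backslash Y})$. The tautological short exact sequence of mapping cones yields a long exact sequence in cohomology, and the task reduces to identifying $H_\g^k(X,X\backslash Y,\F)$ with $H_\g^{k-r}(Y,\F_Y)$ via a chain of natural quasi-isomorphisms whose composite, up to a shift, is the wrong-way homomorphism $i_*$ of~\eqref{equation;wrong-way-equivariant-basic}.

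The key ingredient is an equivariant basic excision lemma: the $\g$-equivariant foliate tubular neighbourhood embedding $i_N\colon N\to X$, whose existence in the complete Riemannian setting is supplied by \cite{lin-sjamaar;riemannian}, induces a quasi-isomorphism $i_N^*\colon \Omega_\g(X,X\backslash Y,\F)\to \Omega_\g(N,N^\times,\F_N)$. I would mirror the proof of Lemma~\ref{lemma;excision}, covering $X$ by $U=i_N(N)$ and $V=X\backslash Y$. Both sets are $\g$-invariant and $\F$-saturated, since the paper already notes that $\g$-invariant submanifolds of a Riemannian foliated $\g$-manifold are saturated by leaves, so one obtains two Mayer-Vietoris sequences of equivariant basic complexes and, by taking mapping cones, a short exact sequence of the relative complexes that reduces the computation to the local model~$N$. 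This step requires an $\F$-basic, $\g$-invariant partition of unity subordinate to $\{U,V\}$; such partitions exist because on a Riemannian foliation the basic functions on a saturated open descend locally to functions on a transverse manifold on which the $\g$-action closes up to a compact group by Molino's theory.

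Once excision is established, I would compose it with the equivariant basic analogue of Proposition~\ref{proposition;transgression}, giving $H_\g^k(N,N^\times,\F_N)\cong H_{\g,\cv}^k(N,\F_N)$, and with the equivariant basic Thom isomorphism of Theorem~\ref{theorem;equivariant-basic-thom}, applicable by Proposition~\ref{proposition;normal-thom}, giving $H_{\g,\cv}^k(N,\F_N)\cong H_\g^{k-r}(Y,\F_Y)$. The proof of Proposition~\ref{proposition;transgression} transfers once one checks that each building block, notably the transgression $\phi=p_*h^*$ associated to the scaling homotopy $h(t,v)=tv$, is a morphism of $\g$-differential graded modules. Since $h$ is itself $\g$-equivariant and foliate, the same argument that underlies Lemma~\ref{lemma;cochain-homotopy} shows that $\phi$ commutes (up to the usual signs) with $d$, $\iota(\xi)$, and $L(\xi)$ for all $\xi\in\g$. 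Tracing the boundary map of the pair $(X,X\backslash Y)$ through these identifications then recovers $i_*=i_{N,*}\circ\zeta_{N,*}$ on the level of complexes, and the resulting long exact sequence is the one claimed.

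The main obstacle is the bookkeeping behind the equivariant basic Mayer-Vietoris and the equivariant basic transgression, neither of which is stated separately as a lemma in the excerpt. The most delicate point is the existence of an $\F$-basic, $\g$-invariant partition of unity subordinate to the cover $\{U,V\}$; given that ingredient, all remaining verifications are routine adaptations of the non-equivariant arguments presented in Section~\ref{section;thom}.
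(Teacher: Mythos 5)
Your proposal follows essentially the same route as the paper's own (sketched) proof: the equivariant basic excision lemma via Mayer--Vietoris, the equivariant basic analogue of Proposition~\ref{proposition;transgression}, and the equivariant basic Thom isomorphism supplied by Proposition~\ref{proposition;normal-thom} and Theorem~\ref{theorem;equivariant-basic-thom}. The one point you flag as delicate, the equivariant basic Mayer--Vietoris principle, is exactly the ingredient the paper outsources to \cite[Proposition~3.3.7]{lin-sjamaar;riemannian}, so your argument is correct and matches the intended proof.
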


\begin{proof}[Sketch of proof]
We follow the proof of
Proposition~\ref{proposition;thom-gysin-submanifold}, using the
equivariant basic Thom isomorphism,
Theorem~\ref{theorem;equivariant-basic-thom}, the equivariant basic
version of Proposition~\ref{proposition;transgression}, and the
equivariant basic version of the excision lemma,
Lemma~\ref{lemma;excision}.  (The latter relies on the Mayer-Vietoris
principle for equivariant basic de Rham theory, for which
see~\cite[Proposition~3.3.7]{lin-sjamaar;riemannian}.)
\end{proof}

\appendix

\section{Cartan-Chern-Weil theory}\label{section;cartan-chern-weil}

This appendix is a summary of the algebraic principles of H. Cartan's
equivariant de Rham theory~\cite{cartan;algebre-transgression}.  We
follow the exposition
of~\cite{guillemin-sternberg;supersymmetry-equivariant},
\cite{alekseev-meinrenken;lie-chern-weil}
and~\cite{goertsches-toeben;equivariant-basic-riemannian}.  This
material is mostly standard, but we state a few results, notably
Theorems~\ref{theorem;principal} and~\ref{theorem;equivariant-ccw},
under weaker hypotheses than our references.

The action of a Lie algebra $\g$ on a manifold induces two actions on
differential forms, namely through Lie derivatives and through
contractions, and these actions intertwine in a distinctive way with
each other and with the exterior derivative.  These features are
abstracted in the notion of a $\g$-differential graded module.  An
example of a $\g$-differential graded module is the Weil algebra
$\W\g$, which models the de Rham complex of a universal bundle $E_G$,
where $G$ is a Lie group with Lie algebra $\g$.  A key feature of the
theory is a convenient criterion for the equivariant cohomology of a
$\g$-differential graded module to be isomorphic to its basic
cohomology, namely Theorem~\ref{theorem;principal} and its equivariant
analogue Theorem~\ref{theorem;equivariant-ccw}.  This criterion asks
whether the $\g$-differential graded module structure extends to a
compatible $\W\g$-module structure.  Such a $\W\g$-module structure
enables an algebraic version of Chern and Weil's connection-curvature
construction of characteristic forms.

In this section we place ourselves in the category of $\Z$-graded
vector spaces over a field $\FF$ of characteristic $0$.  An ungraded
object is considered as a graded object concentrated in degree $0$.
``Module'' means ``graded module'', ``dual'' means ``graded dual'',
``commutative'' means ``graded commutative'', ``derivation'' means
``graded derivation'', etc.  Tensor products are taken over $\FF$ and
equipped with the total grading.  Our complexes will be cochain
complexes.  We denote the translation functor on graded objects by
$[r]$.  So if $(C,d)$ is a cochain complex we have $C[k]^i=C^{i+k}$
and $d[k]=(-1)^kd$.  We will abbreviate ``differential graded module''
to ``dgm'' and ``differential graded algebra'' to ``dga''.

\subsection{The Lie algebra $\tilde\g$}\label{section;tildeg}

Let $\g$ be a finite-dimensional Lie algebra over $\FF$ (placed in
degree $0$).  Define the (graded) vector space $\tilde\g$ by
\[\tilde\g=\g[1]\oplus\g\oplus\FF[-1],\]
i.e.\ place copies of $\g$ in degrees $-1$ and $0$ and a copy of $\FF$
in degree $1$.  For $\xi\in\g$ denote the corresponding element of
$\g[1]$ by $\iota(\xi)$ and the corresponding element of $\g[0]$ by
$L(\xi)$.  Denote the basis element of $\FF[-1]$ corresponding to
$1\in\FF$ by $d$.  The conditions
\begin{equation}\label{equation;gstar}
\begin{aligned}
{[\iota(\xi),\iota(\eta)]}&=0,&[L(\xi),L(\eta)]&=L([\xi,\eta]),
&[d,d]&=0,\\
[L(\xi),d]&=0,&[\iota(\xi),d]&=L(\xi),&
[L(\xi),\iota(\eta)]&=\iota([\xi,\eta])
\end{aligned}
\end{equation}
for all $\xi$, $\eta\in\g$ determine a bilinear bracket on $\tilde\g$
which makes $\tilde\g$ a (graded) Lie algebra.  As for any (graded)
Lie algebra, we can talk about (graded) modules and algebras over
$\tilde\g$.

\subsection{$\g$-differential graded modules}
\label{section;gtilde-modules}

A \emph{$\g$-differential graded module} (abbreviated to
\emph{$\g$-dgm}) is a graded $\tilde\g$-module, in other words a
graded vector space $\M$ equipped with an endomorphism $d$ of degree
$1$ and, for each $\xi\in\g$, endomorphisms $\iota(\xi)$ of degree
$-1$ and $L(\xi)$ of degree $0$, which depend linearly on $\xi$ and
satisfy the commutation rules~\eqref{equation;gstar}.  In particular,
the rule $[L(\xi),L(\eta)]=L([\xi,\eta])$ means that a $\g$-dgm $\M$
is a $\g$-module, and the rule $d^2=\frac12[d,d]=0$ means that $\M$ is
a cochain complex of $\g$-representations.

Let $\M$ be a $\g$-dgm.  An element $m$ of $\M$ is
\emph{$\g$-invariant} if $L(\xi)\,m=0$ for all $\xi\in\g$,
\emph{$\g$-horizontal} if $\iota(\xi)\,m=0$ for all $\xi\in\g$, and
\emph{$\g$-basic} if it is $\g$-invariant and $\g$-horizontal.  We
denote by
\[
\M^\g,\qquad \M_\hhor{\g}=\M_\hor,\qquad
\M_\bbas{\g}=\M_\bas=\M^\g\cap\M_\hor
\]
the subspaces of $\M$ consisting of invariant, horizontal and basic
elements, respectively.  The subspaces $\M^\g$ and $\M_\bbas{\g}$ are
$\tilde\g$-submodules of $\M$.  The \emph{$\g$-basic cohomology} of
$\M$ is the vector space $H_\bbas{\g}(\M)=H(\M_\bbas{\g},d)$.  A
\emph{morphism of $\g$-dgm} $f\colon\M\to\M'$ is degree $0$ morphism
$f$ of $\tilde\g$-modules.  A \emph{homotopy of $\g$-dgm} between two
morphisms $f_0$, $f_1\colon\M\to\M'$ is a degree $-1$ linear map
$F\colon\M\to\M'[-1]$ satisfying
\[[\iota(\xi),F]=0,\qquad[L(\xi),F]=0,\qquad[d,F]=f_1-f_0\]
for all $\xi\in\g$.  Two homotopic morphisms $f_0$ and $f_1$ of
$\g$-dgm induce the same maps in cohomology $H(f_0)=H(f_1)\colon
H(\M)\to H(\M')$ as well as in basic cohomology
$H_\bas(f_0)=H_\bas(f_1)\colon H_\bas(\M)\to H_\bas(\M')$.

\subsection{$\g$-differential graded algebras}
\label{section;gtilde-algebras}

A \emph{$\g$-differential graded algebra} (abbreviated to
\emph{$\g$-dga}) is a graded $\tilde\g$-algebra, i.e.\ a
$\tilde\g$-module which is also an algebra, always assumed to be
unitary, associative and graded, on which the operators $d$, $L(\xi)$
and $\iota(\xi)$ act as graded derivations.  The basic complex of a
$\g$-dga $\A$ is a differential subalgebra of $\A$, so the basic
cohomology $\A$ is an $\FF$-algebra.

The algebra of $\FF$-linear endomorphisms $\E=\End(\M)$ of a $\g$-dgm
$\M$ is a $\g$-dga.  The basic subalgebra of $\E$ is
$\E_\bas=\End_{\tilde\g}(\M)$, the algebra of $\g$-dgm endomorphisms
of $\M$.

Let $\A$ be a $\g$-dga.  By an \emph{$\A$-module} we mean an
$\A$-module $\M$ \emph{in the category of $\g$-dgm}.  So $\M$ is also
equipped with a $\tilde\g$-module structure with the property that the
multiplication map $\A\otimes \M\to\M$ is $\tilde\g$-equivariant,
i.e.\ $\gamma(am)=\gamma(a)m+(-1)^{\abs{\gamma}\abs{a}}a\gamma(m)$ for
all homogeneous $\gamma\in\tilde\g$, $a\in\A$ and $m\in\M$.

We say that $\A$ is \emph{locally free} if it admits a
\emph{connection}, i.e.\ a linear map $\theta\colon\g^*\to\A^1$
satisfying
\[
\iota(\xi)(\theta(x))=\inner{\xi,x}\in\A^0\quad\text{and}\quad
L(\xi)(\theta(x))=-\theta(\ad^*(\xi)x)
\]
for all $\xi\in\g$ and $x\in\g^*$.  It is useful to reformulate this
notion as follows.  Let $\V\g$ be the vector space
$\tilde\g[1]^*=\g[2]^*\oplus\g[1]^*\oplus\FF[0]$.  For $x\in\g^*$
denote the corresponding element of $\g[1]^*$ by $\vartheta(x)$ and
the corresponding element of $\g[2]^*$ by $\dot\vartheta(x)$.  Denote
the degree $0$ element corresponding to $1\in\FF$ by $z$.  The rules
\begin{alignat}{3}
\label{equation;theta}
\iota(\xi)\vartheta(x)&=\inner{\xi,x},&
L(\xi)\vartheta(x)&=-\vartheta(\ad^*(\xi)x),&\quad
d\vartheta(x)&=\dot\vartheta(x),\\
\label{equation;dottheta}
\iota(\xi)\dot\vartheta(x)&=-\vartheta(\ad^*(\xi)x),&\quad
L(\xi)\dot\vartheta(x)&=-\dot\vartheta(\ad^*(\xi)x),&
d\dot\vartheta(x)&=0,\\
\label{equation;c}
\iota(\xi)z&=0,&L(\xi)z&=0,&dz&=0
\end{alignat}
for all $\xi\in\g$ and $x\in\g^*$ determine a structure of $\g$-dgm on
$\V\g$.  Here $\inner{\xi,x}\in\FF\subseteq\A^0$ denotes the dual
pairing.  The $\vartheta(x)$ are the \emph{connection elements} of
$\V\g$.  These rules ensure that a connection on $\A$ is equivalent to
a degree $0$ homomorphism of $\g$-dgm $\theta\colon\V\g\to\A$
satisfying $\theta(z)=1$.  We call a connection $\theta$
\emph{commutative} if the image $\theta(\V\g)$ generates a commutative
subalgebra of $\A$.

A typical example of a locally free $\g$-dga is $\A=\Omega(P)$, the de
Rham complex of a principal $G$-bundle $P$, where $G$ is any Lie group
with Lie algebra $\g$.  Typical examples of $\A$-modules are
$\M=\Omega_\cc(P)$, the compactly supported de Rham complex of $P$,
and $\M=\Omega(P,E)$, the de Rham complex with coefficients in an
equivariant flat vector bundle $E$ over $P$.

\subsection{The Weil algebra}\label{section;weil}

Let $S(\V\g)=S(\tilde\g[1]^*)$ be the (graded) symmetric algebra of
$\V\g$, equipped with the commutative $\g$-dga structure induced by
the $\g$-dgm structure on $\V\g$.  The \emph{Weil algebra} of $\g$ is
the commutative $\g$-dga $\W\g=S(\V\g)/(z-1)$, where $(z-1)$ is the
ideal generated by $z-1$.  The inclusion $\theta_{\text{\rm
    uni}}\colon\V\g\to\W\g$ is a connection on $\W\g$ called the
\emph{universal} or \emph{tautological connection}.  The Weil algebra
has the following universal property: every \emph{commutative}
connection $\theta$ on a $\g$-dga $\A$ is of the form
$\theta=c_\theta\circ\theta_{\text{\rm uni}}$ for a unique $\g$-dga
homomorphism $c_\theta\colon\W\g\to\A$, as in the diagram
\[
\begin{tikzcd}
\V\g\ar[d,"\theta_{\text{\rm uni}}"']\ar[r,"\theta"]&\A
\\
\W\g\ar[ur,dashed,"c_\theta"']
\end{tikzcd}
\]
(We have no need here of the noncommutative connections and
noncommutative Weil algebra
of~\cite{alekseev-meinrenken;lie-chern-weil}.)  We call $c_\theta$ the
\emph{characteristic homomorphism} of the connection.  Any two
connections on a principal bundle are homotopic.
See~\cite[Proposition 3.1]{alekseev-meinrenken;lie-chern-weil} for the
following algebraic counterpart of this fact.  We review the proof
(which is formally identical to the construction of homotopies in de
Rham theory; see Corollary~\ref{corollary;homotopy}), because we will
need the formula for the homotopy.

\begin{proposition}\label{proposition;connection}
Let $\theta_0$ and $\theta_1$ be commutative connections on a $\g$-dga
$\A$.  Suppose that $\theta_0$ and $\theta_1$ commute in the sense
that $[\theta_0(v_0),\theta_1(v_1)]=0$ for all $v_0$, $v_1\in\V\g$.
Then the characteristic homomorphisms $c_{\theta_0}$ and
$c_{\theta_1}$ are homotopic.
\end{proposition}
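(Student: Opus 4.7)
The plan is to mimic the classical de Rham homotopy argument (interpolate, pull back, integrate over the parameter) in the purely algebraic setting. First I would form the \emph{parametrized} $\g$-dga $\tilde\A=\A\otimes\FF[t,dt]$, the tensor product of $\A$ with the polynomial de Rham complex of the affine line (with $\abs{t}=0$, $\abs{dt}=1$, $d(t)=dt$), letting $\g$ act trivially on the parameter factor. The evaluations $\ev_0,\ev_1\colon\tilde\A\to\A$ at $t=0$ and $t=1$ are then $\g$-dga homomorphisms, and the integration map
\[
\pi_*\colon\tilde\A\longto\A[-1],\qquad \pi_*\bigl(a(t)+b(t)\,dt\bigr)=\int_0^1 b(t)\,dt,
\]
is a morphism of $\g$-dgm satisfying the standard homotopy identity $\ev_1-\ev_0=d\pi_*+\pi_*d$.

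Next I would interpolate the two given connections by defining $\tilde\theta\colon\V\g\to\tilde\A$ via
\[
\tilde\theta(\vartheta(x))=(1-t)\,\theta_0(\vartheta(x))+t\,\theta_1(\vartheta(x)),\qquad \tilde\theta(z)=1,
\]
and extending compatibly to $\dot\vartheta(x)$ by applying $d$. Because $t$ is $\g$-basic and the relations \eqref{equation;theta}--\eqref{equation;c} are $\FF[t]$-linear in $\theta_0$ and $\theta_1$, a direct check shows that $\tilde\theta$ is a morphism of $\g$-dgm, hence a connection on $\tilde\A$. The hypothesis $[\theta_0(v_0),\theta_1(v_1)]=0$, together with the individual commutativity of each $\theta_i$, ensures that the subalgebra of $\tilde\A$ generated by $\tilde\theta(\V\g)$ is commutative, so that $\tilde\theta$ is a \emph{commutative} connection. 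This is the one step where the commuting hypothesis really gets used, and it is the only substantive point in the argument.

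By the universal property of $\W\g$ I then obtain a $\g$-dga homomorphism $c_{\tilde\theta}\colon\W\g\to\tilde\A$ with $c_{\tilde\theta}\circ\theta_{\text{\rm uni}}=\tilde\theta$, and uniqueness forces $\ev_i\circ c_{\tilde\theta}=c_{\theta_i}$ for $i=0,1$. Setting $\kappa=\pi_*\circ c_{\tilde\theta}\colon\W\g\to\A[-1]$ and composing the homotopy identity for $\pi_*$ with the cochain map $c_{\tilde\theta}$ yields $c_{\theta_1}-c_{\theta_0}=d\kappa+\kappa d$. Finally, $\kappa$ is compatible with $\iota(\xi)$ and $L(\xi)$ since $c_{\tilde\theta}$ commutes with all of $\tilde\g$ and $\pi_*$ commutes with $\iota(\xi)$ and $L(\xi)$ (these act trivially on the parameter factor), so $\kappa$ is a homotopy of $\g$-dgm as required.
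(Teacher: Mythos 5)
Your argument is correct and is essentially identical to the paper's own proof: your parameter algebra $\FF[t,dt]$ is the paper's ``graded line'' $\group{S}$, your $\pi_*$ is its integral $J=\int_0^1$, and the interpolated connection $(1-t)\theta_0+t\theta_1$ with homotopy $\pi_*\circ c_{\tilde\theta}$ matches the paper's $\theta(x)=(1-s)\otimes\theta_0+s\otimes\theta_1$ and $J\circ c_\theta$ exactly. You also correctly isolate the one substantive point, namely that the commuting hypothesis is what makes the interpolated connection commutative so that the universal property of $\W\g$ applies.
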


\begin{proof}
The \emph{graded line} is the Koszul differential algebra
$\group{S}=S(\FF[-2]\oplus\FF[-1])$ of $\FF$.  Let $s\in\FF[-2]$ and
$\dot{s}\in\FF[-1]$ be the two elements corresponding to the identity
$1\in\FF$.  The Koszul differential is given on generators by
$ds=\dot{s}$ and $d\dot{s}=0$.  For each $a\in\FF$ the
\emph{evaluation map} is the $\FF$-linear functional
$\ev_a\colon\group{S}\to\FF$ defined on basis elements by
$\ev_a(s^k)=a^k$ and $\ev(s^k\dot{s})=0$.  We often write $f(a)$
instead of $\ev_a(f)$.  The \emph{integral} is the $\FF$-linear
functional $J=\int_0^1\colon\group{S}\to\FF$ defined by $J(s^k)=0$ and
$J(s^k\dot{s})=1/(k+1)$.  These functionals are not homogeneous with
respect to the $\Z$-grading, but with respect to the $\Z/2\Z$-grading:
$\ev_a$ is even and $J$ is odd.  They satisfy the ``fundamental
theorem of calculus'' $J df=f(1)-f(0)$ for $f\in\group{S}$.  They
extend $\A$-linearly to functionals
\[
\ev_a=\ev_a\otimes\id_\A,\quad J=J\otimes\id_\A\colon\quad
\group{S}\otimes\A\to\A.
\]
The functionals $\ev_a$ are morphisms of $\g$-dgm and the functional
$J$ satisfies $[\iota(\xi),J]=[L(\xi),J]=0$ for all $\xi\in\g$.  The
fundamental theorem of calculus for $\alpha\in\group{S}\otimes\A$ says
that $J d\alpha=\alpha(1)-\alpha(0)-dJ\alpha$,
i.e. $[d,J]=\ev_1-\ev_0$.  Thus (the degree $-1$ component of) $J$ is
a homotopy from $\ev_0$ to $\ev_1$.  The $\g$-dga $\group{S}\otimes\A$
has a commutative connection $\theta$ defined by
$\theta(x)=(1-s)\otimes\theta_0+s\otimes\theta_1$ with corresponding
characteristic homomorphism $c_\theta\colon\W\g\to\group{S}\otimes\A$.
The operator $J\circ c_\theta\colon\W\g\to\A$ is then a homotopy from
$c_{\theta_0}$ to $c_{\theta_1}$.
\end{proof}

A few words about the structure of the Weil algebra.  The connection
elements $\vartheta(x)\in\g[1]^*\subseteq\W\g$ are of degree $1$.  The
map $\vartheta\colon\g^*\inj\V\g$ defined by $x\mapsto\vartheta(x)$
extends to an algebra homomorphism $\vartheta\colon\Lambda\g^*\cong
S(\g[1]^*)\inj\W\g$.  Similarly the map
$\dot\vartheta\colon\g^*\inj\V\g$ defined by
$x\mapsto\dot\vartheta(x)$ extends to an algebra homomorphism
$\dot\vartheta\colon S\g^*\cong S(\g[2]^*)\inj\W\g$.  The degree $2$
elements $\dot\vartheta(x)\in\g[2]^*\subseteq\W\g$ are not horizontal
(see~\eqref{equation;dottheta}), but the \emph{curvature elements}
$\mu(x)=\dot\vartheta(x)+\frac12\vartheta(\lambda(x))\in\W\g$ are,
where $\lambda\colon\g^*\to\Lambda^2\g^*$ is the map dual to the Lie
bracket.  The map $\mu\colon\g^*\to\W\g$ which sends $x$ to $\mu(x)$
extends to an algebra morphism $\mu\colon S\g^*\cong
S(\g[2]^*)\inj\W\g$.  See
e.g.~\cite[Ch.\ 3]{guillemin-sternberg;supersymmetry-equivariant}
or~\cite[\S\,3]{alekseev-meinrenken;lie-chern-weil} for the following
statement.

\begin{proposition}\phantomsection\label{proposition;weil}
\begin{enumerate}
\item\label{item;koszul}
The homomorphism $\vartheta\otimes\dot\vartheta\colon
S(\g[2]\oplus\g[1])^*\to\W\g$ is an isomorphism of algebras.  In terms
of the generators $\vartheta(x)$ and $\dot\vartheta(x)$ the
$\tilde\g$-structure equations of $\W\g$ are given by
\eqref{equation;theta}--\eqref{equation;dottheta}.  In particular, as
a complex $\W\g$ is isomorphic to the Koszul complex of $\g^*$ and is
therefore null-homotopic.
\item\label{item;connection-curvature}
The homomorphism $\vartheta\otimes\mu\colon
S(\g[2]\oplus\g[1])^*\to\W\g$ is also an isomorphism of algebras.  The
image of the homomorphism $\mu\colon S(\g[2]^*)\to\W\g$ is the
horizontal subalgebra $(\W\g)_\hor$.  In terms of the generators
$\vartheta(x)$ and $\mu(x)$ the $\tilde\g$-structure equations of
$\W\g$ read as follows:
\begin{alignat*}{3}
\iota(\xi)\vartheta(x)&=\inner{\xi,x},&\quad
L(\xi)\vartheta(x)&=-\vartheta(\ad^*(\xi)x),&\quad
d\vartheta(x)&=\frac12d_\CE\vartheta(x)+\mu(x),\\
\iota(\xi)\mu(x)&=0,&\quad L(\xi)\mu(x)&=-\mu(\ad^*(\xi)x),&\quad
d\mu(x)&=d_\CE\mu(x).
\end{alignat*}
\item\label{item;weil-basic}
The isomorphism $\mu\colon S(\g[2]^*)\to(\W\g)_\hor$ induces an
isomorphism $S(\g[2]^*)^\g\cong(\W\g)_\bas$.  The restriction of the
differential $d_{\W\g}$ to $(\W\g)_\bas$ is $0$, so $H_\bas(\W\g)\cong
S(\g[2]^*)^\g$.
\end{enumerate}
\end{proposition}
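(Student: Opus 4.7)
The plan is to verify all three parts by unpacking $\W\g$ explicitly as a free graded commutative algebra and exploiting the derivation property of $d$, $L(\xi)$, and $\iota(\xi)$.

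For~(i), I would first observe that $S(\V\g) = S(\g[2]^* \oplus \g[1]^*) \otimes \FF[z]$ since $z$ lies in degree $0$, so the quotient by $(z-1)$ identifies $\W\g$ with $S(\g[2]^* \oplus \g[1]^*)$ as an algebra. Under this identification $\vartheta$ and $\dot\vartheta$ freely generate $\W\g$ (the exterior part $S(\g[1]^*) = \Lambda\g^*$ coming from the odd generators, the polynomial part $S(\g[2]^*) = S\g^*$ from the even generators), giving the algebra isomorphism. The structure equations~\eqref{equation;theta}--\eqref{equation;dottheta} pin down the $\tilde\g$-action on generators, and extend uniquely by the derivation property. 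For the null-homotopy I would pick a basis $(x_i)$ of $\g^*$: the formulas $d\vartheta(x_i) = \dot\vartheta(x_i)$ and $d\dot\vartheta(x_i) = 0$ identify $(\W\g,d)$ with the tensor product over $i$ of one-dimensional Koszul complexes $\Lambda[\vartheta(x_i)] \otimes \FF[\dot\vartheta(x_i)]$, each of which admits the elementary Koszul contracting homotopy, and tensor products of null-homotopic complexes are null-homotopic.

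For~(ii), the substitution $\mu(x) = \dot\vartheta(x) + \tfrac12\vartheta(\lambda(x))$ modifies $\dot\vartheta(x)$ only by an element in the subalgebra generated by the $\vartheta$'s, so the change of generators from $(\vartheta,\dot\vartheta)$ to $(\vartheta,\mu)$ is triangular with identity on the leading $\dot\vartheta$-term; hence $\vartheta$ and $\mu$ again freely generate $\W\g$, giving the second algebra isomorphism. The revised structure equations for $\mu$ follow by applying $d$, $L(\xi)$, $\iota(\xi)$ to the definition of $\mu(x)$ and invoking~\eqref{equation;theta}--\eqref{equation;dottheta}; the key cancellation is that in $\iota(\xi)\mu(x) = \iota(\xi)\dot\vartheta(x) + \tfrac12\iota(\xi)\vartheta(\lambda(x))$ the two summands each produce $\vartheta(\ad^*(\xi)x)$ with opposite signs. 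For the horizontality claim I would write an arbitrary element of $\W\g$ in the second presentation as $\sum_\alpha P_\alpha\,\vartheta(y_\alpha)$ with $P_\alpha \in \mu(S(\g[2]^*))$ and $y_\alpha$ running over a basis of $\Lambda\g^*$; since $\iota(\xi)$ annihilates the $\mu$-factors and acts on the $\vartheta$-factors as the standard interior product on $\Lambda\g^*$, whose joint kernel over $\xi \in \g$ is $\Lambda^0\g^* = \FF$, horizontality forces every $y_\alpha$ to lie in $\FF$, so the element is a polynomial in the $\mu$'s alone.

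For~(iii), the isomorphism $\mu\colon S(\g[2]^*) \to (\W\g)_\hor$ produced in~(ii) is $\g$-equivariant because $L(\xi)\mu(x) = -\mu(\ad^*(\xi)x)$, so passing to $\g$-invariants yields $S(\g[2]^*)^\g \cong (\W\g)_\bas$. Since $d$ commutes with each $L(\xi)$ and satisfies $[\iota(\xi),d] = L(\xi)$, it preserves the basic subspace; but $(\W\g)_\bas$ is concentrated in even degrees (as each $\mu(x)$ has degree $2$), while $d$ has degree $+1$, so $d$ of a basic element would be a basic element of odd degree, of which there are none. Hence $d|_\bas = 0$ and $H_\bas(\W\g) = S(\g[2]^*)^\g$. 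The main obstacle will be the horizontality statement in~(ii): the interior-product kernel argument requires first knowing that $\W\g$ decomposes cleanly as a free module $\mu(S(\g[2]^*)) \otimes \vartheta(\Lambda\g^*)$, which is exactly the content of the second algebra isomorphism, so (ii) must be assembled in the correct order. Once that decomposition is in hand, the remainder is derivation calculus and degree bookkeeping.
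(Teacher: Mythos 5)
Your argument is correct and is essentially the standard one: the paper itself gives no proof of this proposition, deferring to the cited references of Guillemin--Sternberg and Alekseev--Meinrenken, and your decomposition $\W\g\cong S(\g[2]^*)\otimes\Lambda(\g[1]^*)$, the triangular change of generators $\dot\vartheta\mapsto\mu$, the interior-product kernel argument identifying $(\W\g)_\hor$ with $\mu(S(\g[2]^*))$, and the parity argument for $d|_{(\W\g)_\bas}=0$ are exactly the arguments those references use. The only quibble is terminological: each one-variable Koszul factor $\Lambda[\vartheta(x_i)]\otimes\FF[\dot\vartheta(x_i)]$ is not literally null-homotopic (it has $H^0=\FF$) but is homotopy equivalent to $\FF$, and it is this property that passes to tensor products --- a looseness that the paper's own phrasing shares, so no harm done.
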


Here $d_\CE$ denotes the differential on $\W\g$ viewed as the
Cartan-Eilenberg complex $\Lambda\g^*\otimes S\g^*$ of the $\g$-module
$S\g^*$.  This differential sends $x\in\Lambda^1\g^*$ to
$-\lambda(x)\in\Lambda^2\g^*$ and $x\in S^1\g^*$ to the element of
$\Lambda^1\g^*\otimes S^1\g^*\cong\Hom(\g,S^1\g^*)$ given by
$d_\CE(x)(\xi)=L(\xi)(x)$.  The formulas for $d\vartheta$ and $d\mu$
are the \emph{Cartan-Bianchi identities}.

Let $\M$ be a $\g$-dgm.  Suppose that the algebra of $\FF$-linear
endomorphisms $\E=\End(\M)$ admits a commutative connection $\theta$.
Then the characteristic map $c_\theta\colon\W\g\to\E$ defines a
$\tilde\g$-linear multiplication law $\W\g\otimes\M\to\M$ on $\M$.  In
other words a $\W\g$-module structure on $\M$ is nothing but a
commutative connection $\theta$ on its endomorphism algebra.  With
this in mind we denote the multiplication law of a $\W\g$-module $\M$
by
\[
C_\theta\colon\W\g\otimes\M\longto\M\colon w\otimes m\longmapsto
c_\theta(w)m
\] 
and call it the \emph{Cartan-Chern-Weil homomorphism} of $\M$.  It is
a morphism of $\g$-dgm and therefore induces a morphism of complexes
\[(\W\g\otimes\M)_\bas\longto\M_\bas\]
and of graded vector spaces $H_\bas(\W\g\otimes\M)\to H_\bas(\M)$.

\subsection{Equivariant cohomology}\label{section;equivariant}

Let $\M$ be a $\g$-dgm.  The \emph{Weil complex} of $\M$ is
$\M_\g=(\W\g\otimes\M)_\bas$, the basic complex of the $\g$-dgm
$\W\g\otimes\M$, the differential of which we denote by $d_\g$.  The
cohomology of the Weil complex $H_\g(\M)=H(\M_\g)$ is the
\emph{equivariant cohomology} of $\M$.  A morphism of $\g$-dgm
$f\colon\M\to\M'$ induces a morphism in equivariant cohomology
$f_*\colon H_\g(\M)\to H_\g(\M')$.  Two homotopic morphisms induce the
same map in equivariant cohomology.

For the trivial $1$-dimensional $\g$-dgm $\M=\FF$ we have
$(\W\g\otimes\FF)_\bas=(\W\g)_\bas=S(\g[2]^*)^\g$, so
$H_\g(\FF)=S(\g[2]^*)^\g$.  For an arbitrary trivial $\g$-dgm $\M$ we
have $(\W\g\otimes\M)_\bas=(\W\g)_\bas\otimes\M$ with
$d_\g={\id}\otimes d_\M$, so $H_\g(\M)=S(\g[2]^*)^\g\otimes H(\M)$.

The map $\M\to\W\g\otimes\M$ sending $m$ to $1\otimes m$ is a morphism
of $\g$-dgm and therefore restricts to an injective morphism of
complexes $\M_\bas\to(\W\g\otimes\M)_\bas$, which induces a map
$H_\bas(\M)\to H_\g(\M)$.  The latter map is in general neither
injective nor surjective.

Suppose however that the $\g$-dgm structure on $\M$ extends to a
$\W\g$-module structure with multiplication map
$C_\theta\colon\W\g\otimes\M\to\M$ (where $\theta$ is the commutative
connection on the endomorphism algebra $\End(\M)$ that defines the
$\W\g$-module structure on $\M$).  For our purposes the following
statement, which is a variant of~\cite[Theorem
  5.2.1]{guillemin-sternberg;supersymmetry-equivariant}
and~\cite[Proposition 3.2]{alekseev-meinrenken;lie-chern-weil}, is the
main point of Cartan-Chern-Weil theory.  Note the absence of any
hypotheses on the Lie algebra $\g$.  In particular $\g$ need not be
reductive, nor is $\M$ required to be semisimple as a $\g$-module.

\begin{theorem}\label{theorem;principal}
Let $\M$ be a $\W\g$-module.  The Cartan-Chern-Weil map
$C_\theta\colon\W\g\otimes\M\to\M$ is a homotopy inverse of the
inclusion $j\colon\M\to\W\g\otimes\M$.  Therefore $j$ restricts to a
homotopy equivalence $\M_\bas\overset\simeq\longto\M_\g$ and induces
an isomorphism $H_\bas(\M)\overset\cong\longto H_\g(\M)$.
\end{theorem}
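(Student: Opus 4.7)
The plan is to verify two statements: (a) $C_\theta\circ j=\id_\M$, and (b) $j\circ C_\theta$ is homotopic to $\id_{\W\g\otimes\M}$ as a morphism of $\g$-dgm. The first is immediate from the definitions: for $m\in\M$, $C_\theta(j(m))=C_\theta(1\otimes m)=c_\theta(1)m=m$, since $c_\theta$ is unit-preserving. All the substance lies in (b); once it is established, $h$ restricts to the $\g$-basic subcomplexes because $h$ is a homotopy of $\g$-dgm, so $j$ and the restriction of $C_\theta$ give mutually inverse homotopy equivalences $\M_\bas\simeq(\W\g\otimes\M)_\bas=\M_\g$, which in turn yields the cohomological isomorphism $H_\bas(\M)\cong H_\g(\M)$.

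For (b), my strategy is to pass to the endomorphism $\g$-dga $\A=\End(\W\g\otimes\M)$ and exhibit two commuting commutative connections on it, then invoke Proposition~\ref{proposition;connection}. Concretely, for $x\in\g^*$ define $\Theta_L(x)\in\A^1$ to be left multiplication by $\theta_{\text{\rm uni}}(x)\otimes 1$, and $\Theta_R(x)\in\A^1$ to be the endomorphism $w\otimes m\mapsto(-1)^{\abs{w}}w\otimes\theta(x)m$, where $\theta\colon\V\g\to\End(\M)$ is the commutative connection encoding the given $\W\g$-module structure on $\M$. Both $\Theta_L$ and $\Theta_R$ are connections on $\A$; each is commutative because $\theta_{\text{\rm uni}}$ and $\theta$ are; and the two commute with each other because they act on disjoint tensor factors.

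By Proposition~\ref{proposition;connection} the characteristic morphisms $c_{\Theta_L},c_{\Theta_R}\colon\W\g\to\A$ are homotopic through a degree $-1$ map $H\colon\W\g\to\A$ satisfying $[d,H]=c_{\Theta_R}-c_{\Theta_L}$ and $[\iota(\xi),H]=[L(\xi),H]=0$. Evaluation at the base point $1\otimes m$ yields
\[
c_{\Theta_L}(w)(1\otimes m)=w\otimes m,\qquad c_{\Theta_R}(w)(1\otimes m)=1\otimes c_\theta(w)m=(j\circ C_\theta)(w\otimes m).
\]
I therefore set $h(w\otimes m):=H(w)(1\otimes m)$ and verify by a direct computation that $[D,h]=j\circ C_\theta-\id$, where $D$ is the differential on $\W\g\otimes\M$, and that $h$ commutes with each $\iota(\xi)$ and $L(\xi)$, using that $1\in\W\g$ is $\g$-basic so that $\iota(\xi)$ and $L(\xi)$ pass through the slot $1\otimes(\cdot)$.

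The step I expect to require the most care is the last sign/derivation check — translating the homotopy identity $[d_\A,H]=c_{\Theta_R}-c_{\Theta_L}$ on $\W\g$ into $[D,h]=j\circ C_\theta-\id$ on $\W\g\otimes\M$. This entails unpacking the definitions of $d$, $\iota(\xi)$, $L(\xi)$ on $\A=\End(\W\g\otimes\M)$ as graded commutators with the corresponding operators on $\W\g\otimes\M$, and tracking how these commutators interact with evaluation at $1\otimes m$ and with the Koszul signs coming from $D(w\otimes m)=dw\otimes m+(-1)^{\abs{w}}w\otimes dm$. The calculation is routine but requires attention; everything else is formal consequence.
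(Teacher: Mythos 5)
Your proposal is correct and follows essentially the same route as the paper: the paper also reduces to the endomorphism algebra $\End(\W\g\otimes\M)$, introduces the commuting pair of connections given by left multiplication by $\theta_{\text{\rm uni}}(x)\otimes1$ and by the action of $\theta(x)$ on the $\M$ factor (with the same Koszul sign), applies Proposition~\ref{proposition;connection}, and evaluates the resulting homotopy at $1\otimes m$. The only difference is presentational — the paper writes down the two characteristic homomorphisms $f_0,f_1\colon\W\g\to\End(\W\g\otimes\M)$ first and then identifies the underlying connections, whereas you start from the connections themselves.
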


\begin{proof}
We have $C_\theta(j(m))=C_\theta(1\otimes m)=m$ for $m\in\M$, so
$C_\theta\circ j=\id_\M$.  Next we show that the map $h_1=j\circ
C_\theta$ is homotopic to the identity map $h_0=\id_{\W\g\otimes\M}$.
We have
\[
h_1(w\otimes m)=j(C_\theta(w\otimes m))=j(c_\theta(w)m)=1\otimes
c_\theta(w)m
\]
for $w\in\W\g$ and $m\in\M$.  Let $\D$ be the algebra
$\End(\W\g\otimes\M)$ and define $\g$-dga homomorphisms $f_0$,
$f_1\colon\W\g\to\D$ by
\[
f_0(v)(w\otimes m)=vw\otimes m,\qquad f_1(v)(w\otimes
m)=(-1)^{\abs{v}\abs{w}}w\otimes c_\theta(v)m
\]
for $v$, $w\in\W\g$ and $m\in\M$.  Then
\begin{equation}\label{equation;characteristic-homotopy}
\begin{aligned}
f_0(v)(1\otimes m)&=v\otimes m=h_0(v\otimes m),\\
f_1(v)(1\otimes m)&=1\otimes c_\theta(v)m=h_1(v\otimes m).
\end{aligned}
\end{equation}
We have $[f_0(v_0),f_1(v_1)]=0$ for all $v_0$, $v_1\in\W\g$, so the
maps $f_0$ and $f_1$ are the characteristic homomorphisms of a
commutating pair of connections $\theta_0$ and $\theta_1$ on $\D$.
Proposition~\ref{proposition;connection} gives us a homotopy
$F\colon\W\g\to\D[-1]$ satisfying $f_1-f_0=[d,F]$.  This homotopy is
given by $F(v)=\int_0^1c_\Theta(v)$ for $v\in\W\g$, and
$c_\Theta\colon\W\g\to\group{S}\otimes\D$ is the characteristic
homomorphism of the connection $\Theta(x)=(1-s)\otimes
\theta_0+s\otimes\theta_1$ on $\group{S}\otimes\D$.  Using
\eqref{equation;characteristic-homotopy} we obtain that
$h_1-h_0=[d,H]$, where $H\colon\W\g\otimes\M\to(\W\g\otimes\M)[-1]$ is
the homotopy given by
\[
H(w\otimes m)=F(w)(1\otimes m)=
\biggl(\int_0^1c_\Theta(w)\biggr)(1\otimes m)
\]
for $w\in\W\g$ and $m\in\M$.
\end{proof}

\subsection{Change of Lie algebra}\label{section;change}

A Lie algebra homomorphism $\h\to\g$ induces a homomorphism
$\tilde\h\to\tilde\g$ and hence a pullback functor from $\g$-dgm to
$\h$-dgm.  If $\M$ is a $\g$-dgm we have natural maps
\[
\M^\g\longto\M^\h,\quad\M_\hhor{\g}\longto\M_\hhor{\h},
\quad\M_\bbas{\g}\longto\M_\bbas{\h},\quad\M_\g\longto\M_\h,
\]
and hence maps in basic cohomology $H_\bbas{\g}(\M)\to
H_\bbas{\h}(\M)$ and in equivariant cohomology $H_\g(\M)\to H_\h(\M)$.
If $\M$ is an $\h$-dgm and the morphism $\h\to\g$ is surjective with
kernel $\liek$, then on the subcomplex $\M_\bbas{\liek}$ the
operations $L(\eta)$ and $\iota(\eta)$ for $\eta\in\liek$ are trivial,
so $\M_\bbas{\liek}$ is in a natural way a $\g$-dgm, and we have
\begin{equation}\label{equation;basic-restrict}
\M_\bbas{\h}=(\M_\bbas{\liek})_\bbas{\g}.
\end{equation}
This implies $H_\bbas\h(\M)=H_\bbas\g(\M_\bbas{\liek})$.  In the case
of a product of Lie algebras we have the following simple statement
about the Weil complex of $\M$.

\begin{lemma}\label{lemma;equivariant-ccw}
Let $\h=\liek\times\g$ be the product of two Lie algebras $\liek$ and
$\g$.
\begin{enumerate}
\item\label{item;weil-product}
$\W\h$ is isomorphic to $\W\liek\otimes\W\g$ as an $\h$-dga.
\item\label{item;module-product}
Let $\M$ be an $\h$-dgm.  The $\liek$-Weil complex $\M_{\liek}$ is a
$\g$-dgm and the $\h$-Weil complex $\M_\h$ is isomorphic to
$(\M_{\liek})_\g$ as an $\h$-dgm.  It follows that $H_\h(\M)\cong
H_\g(\M_{\liek})$.
\end{enumerate}
\end{lemma}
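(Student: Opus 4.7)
The plan is to prove (i) by invoking the universal property of the Weil algebra, and then to reduce (ii) to (i) together with the factorization of basic subcomplexes provided by~\eqref{equation;basic-restrict}.

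For (i), I would first observe that the tensor product $\A=\W\liek\otimes\W\g$ is a commutative $\h$-dga: write $\xi\in\h$ as $\xi=\xi_\liek+\xi_\g$ and let $\iota(\xi)$, $L(\xi)$ and $d$ act by the graded Leibniz rule using the respective operators on each tensor factor. The tautological connections $\theta_\liek$ on $\W\liek$ and $\theta_\g$ on $\W\g$ combine, via the splitting $\h^*=\liek^*\oplus\g^*$, into a commutative connection $\theta_\h\colon\V\h\to\A$ on $\A$. The universal property of $\W\h$ from~\S\,\ref{section;weil} then yields a unique morphism of $\h$-dgas $c\colon\W\h\to\W\liek\otimes\W\g$ with $c\circ\theta_{\text{uni}}=\theta_\h$. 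To see that $c$ is an isomorphism it suffices to check, using Proposition~\ref{proposition;weil}\eqref{item;koszul}, that $c$ sends the algebra generators $\vartheta_\h(x)$ and $\dot\vartheta_\h(x)$ (for $x\in\h^*$ decomposed along $\liek^*\oplus\g^*$) bijectively to the corresponding generators of $\W\liek\otimes\W\g$; this is immediate from the definition of $\theta_\h$.

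For (ii), the first step is to verify that $\M_\liek=(\W\liek\otimes\M)_\bbas{\liek}$ is naturally a $\g$-dgm. Let $\g$ act on $\W\liek\otimes\M$ via the trivial action on $\W\liek$ and the restriction to $\g\subseteq\h$ of the $\h$-action on $\M$. Because $\liek$ and $\g$ commute inside $\h$, the operators $\iota(\xi)$, $L(\xi)$ for $\xi\in\g$ commute with $\iota(\eta)$, $L(\eta)$ for $\eta\in\liek$ (and of course with $d$), so they preserve $\M_\liek$ and endow it with a $\g$-dgm structure.

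Next, using~(i) I identify $\W\h\otimes\M\cong\W\liek\otimes\W\g\otimes\M$ as $\h$-dgms. Because $\liek$ acts trivially on the middle factor $\W\g$, the $\liek$-contractions and $\liek$-Lie derivatives on this triple tensor product act only on $\W\liek$ and $\M$. Consequently, for any homogeneous element $w\otimes v\otimes m$ the conditions $\iota(\eta)=0$ and $L(\eta)=0$ for $\eta\in\liek$ reduce to the analogous conditions on $w\otimes m\in\W\liek\otimes\M$, which gives the identification of $\g$-dgms
\[
(\W\liek\otimes\W\g\otimes\M)_\bbas{\liek}\;=\;\W\g\otimes(\W\liek\otimes\M)_\bbas{\liek}\;=\;\W\g\otimes\M_\liek.
\]
Applying~\eqref{equation;basic-restrict} to the $\h$-dgm $\W\h\otimes\M$ now yields
\[
\M_\h=(\W\h\otimes\M)_\bbas{\h}=\bigl((\W\h\otimes\M)_\bbas{\liek}\bigr)_\bbas{\g}=(\W\g\otimes\M_\liek)_\bbas{\g}=(\M_\liek)_\g,
\]
as an isomorphism of complexes (indeed of $\h$-dgms, with $\liek$ acting trivially on the right-hand side). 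Passing to cohomology gives $H_\h(\M)\cong H_\g(\M_\liek)$.

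The only real subtlety lies in verifying that the isomorphism in~(i) is compatible with the full $\tilde\h$-structure and not merely with the underlying graded algebra structure; this is handled cleanly by appealing to the universal property rather than working with explicit generators. Everything in~(ii) is then essentially bookkeeping, with the key observation being that the trivial $\liek$-action on $\W\g$ makes the $\W\g$-factor pass transparently through the formation of $\liek$-basic subcomplexes.
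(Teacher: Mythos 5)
Your proof is correct and follows essentially the same route as the paper: both parts use the universal property of the Weil algebra to produce the morphism $\W\h\to\W\liek\otimes\W\g$, and part (ii) is the same bookkeeping with~\eqref{equation;basic-restrict} and the commutation of the $\tilde\liek$- and $\tilde\g$-operations. The only (minor) divergence is in verifying that the map in (i) is an isomorphism: the paper builds an explicit inverse $\psi(v\otimes w)=\psi_\liek(v)\psi_\g(w)$ from the connections induced by the projections $\h\to\liek$ and $\h\to\g$, whereas you check bijectivity on the generators $\vartheta$, $\dot\vartheta$ via Proposition~\ref{proposition;weil}\eqref{item;koszul}; both arguments are valid.
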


\begin{proof}
\eqref{item;weil-product}~The product of the universal connections
$\liek^*\to\W\liek$ and $\g^*\to\W\g$ is a connection
$\h^*\to\W\liek\otimes\W\g$ and therefore induces an $\h$-dga
homomorphism $\phi\colon\W\h\to\W\liek\otimes\W\g$.  The projections
$\h\to\liek$ and $\h\to\g$ induce two connections $\liek^*\to\W\h$ and
$\g^*\to\W\h$, and hence algebra maps
$\psi_{\liek}\colon\W\liek\to\W\h$ and $\psi_\g\W\g\to\W\h$.  The map
$\psi\colon\W\liek\otimes\W\g\to\W\h$ given by $\psi(v\otimes
w)=\psi_{\liek}(v)\psi_\g(w)$ is the inverse of $\phi$.

\eqref{item;module-product}~The algebra $\W\liek$ is trivial as a
$\g$-dgm and the $\tilde\g$-operations on $\M$ commute with the
$\tilde{\liek}$-operations, so
$\M_{\liek}=(\W\liek\otimes\M)_\bbas{\liek}$ is a $\g$-dgm.  It
follows from~\eqref{equation;basic-restrict} and
from~\eqref{item;weil-product} that we have an isomorphism of
complexes
\[
\M_\h=(\W\h\otimes\M)_\bbas\h=
\bigl((\W\g\otimes\W\liek\otimes\M)_\bbas{\liek}\bigr)_\bbas\g\cong
(\M_{\liek})_\g.
\]
Therefore $H_\h(\M)$ is isomorphic to $H_\g(\M_{\liek})$.
\end{proof}

Let $\M$ be an $\h$-dgm and $\E=\End(\M)$ its algebra of
endomorphisms.  Suppose that $\E$ is locally free as a $\liek$-dga, so
that it has a $\liek$-connection $\theta\colon\liek^*\to\E^1$.  We say
that the connection $\theta$ is \emph{$\g$-invariant} if
$\theta(y)\in\E^1$ is $\g$-invariant for all $y\in\liek^*$.
Similarly, $\theta$ is \emph{$\g$-horizontal} if $\theta(y)$ is
$\g$-horizontal for all $y\in\liek^*$; and $\theta$ is
\emph{$\g$-basic} if $\theta(y)$ is $\g$-basic for all $y\in\liek^*$.
Define
\[
\iota_\theta\colon\liek^*\longto\g[1]^*\otimes\E^0\cong
\Hom(\g[1],\E^0)
\]
by $\iota_\theta(y)(\xi)=-\iota(\xi)\theta(y)$ for $\xi\in\g$ and
$y\in\liek^*$.  Berline and Vergne's~\cite{berline-vergne;zeros}
\emph{$\g$-equivariant extension} of the connection $\theta$ is the
entity
\[
\theta_\g\colon\liek^*\longto
(\W\g\otimes\E)^1=\FF\otimes\E^1\oplus\g[1]^*\otimes\E^0
\]
defined by $\theta_\g=\theta+\iota_\theta$.  This terminology is
justified by the next theorem, which is a $\g$-equivariant version of
Theorem~\ref{theorem;principal}.  This theorem refines the earlier
results~\cite[\S\,4.6]{guillemin-sternberg;supersymmetry-equivariant}
and~\cite[Proposition
  3.9]{goertsches-toeben;equivariant-basic-riemannian} in two ways: it
is valid for arbitrary Lie algebras $\liek$ and $\g$, and the
cohomology isomorphism is given by an explicit homotopy equivalence.

\begin{theorem}\label{theorem;equivariant-ccw}
Let $\h=\liek\times\g$ be the product of two Lie algebras $\liek$ and
$\g$.  Let $\M$ be an $\h$-dgm.  Suppose that there exists a
$\g$-invariant connection $\theta\colon\liek^*\to\E^1$ on the
$\liek$-dga $\E=\End(\M)$.
\begin{enumerate}
\item\label{item;berline-vergne}
The $\g$-equivariant extension $\theta_\g=\theta+\iota_\theta$ is a
$\g$-basic connection on the $\liek$-dga $\E_\g=\W\g\otimes\E$.
\item\label{item;equivariant-ccw}
Suppose that the connection $\theta_\g$ is commutative.  Let
$c_{\g,\theta}\colon\W\liek\to\E_\g$ be the characteristic
homomorphism and
\[
C_{\g,\theta}\colon\W\h\otimes\M\cong\W\liek\otimes\W\g\otimes\M
\longto\W\g\otimes\M
\]
the Cartan-Chern-Weil map associated with $\theta_\g$.  Then
$C_{\g,\theta}$ is a homotopy inverse of the inclusion
$j\colon\W\g\otimes\M\to\W\h\otimes\M$.  It follows that $j$ induces a
homotopy equivalence $(\M_\bbas{\liek})_\g\overset\simeq\longto\M_\h$
and hence an isomorphism $H_\g(\M_\bbas{\liek})\overset\cong\longto
H_\h(\M)$.
\end{enumerate}
\end{theorem}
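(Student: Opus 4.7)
My plan is to derive (ii) as an equivariant upgrade of Theorem~\ref{theorem;principal} once (i) is in place; (i) itself amounts to an algebraic verification inside $\W\g\otimes\E$.

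For (i), I check separately that $\theta_\g$ satisfies the defining identities of a $\liek$-connection on $\E_\g=\W\g\otimes\E$ and that its image is $\g$-basic. The $\liek$-connection identities $\iota(\eta)\theta_\g(y)=\langle\eta,y\rangle$ and $L(\eta)\theta_\g(y)=-\theta_\g(\ad^*(\eta)y)$ for $\eta\in\liek$ follow from the corresponding identities for $\theta$ on $\E$, since $[\liek,\g]=0$ forces $\iota(\eta)$ and $L(\eta)$ to act trivially on the $\W\g$ factor. The $\g$-invariance of $\theta_\g$ uses the hypothesis $L(\xi)\theta(y)=0$ for $\xi\in\g$ combined with the structure equations of $\tilde\h$. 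The $\g$-horizontality is the point of the Berline-Vergne correction: expanding $\iota_\theta(y)=\sum_a\vartheta(e^a)\otimes(-\iota(e_a)\theta(y))$ in a basis $\{e_a\}$ of $\g$, the relation $\iota(\xi)\vartheta(e^a)=\langle\xi,e^a\rangle$ forces a cancellation against $\iota(\xi)\theta(y)$, with the residual second-order contractions vanishing because the connection form is algebraically of degree one.

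For (ii), I apply Theorem~\ref{theorem;principal} to the $\liek$-dgm $\W\g\otimes\M$, viewed as a module over the $\liek$-dga $\E_\g$. Part (i) and the commutativity hypothesis make $\theta_\g$ a commutative $\liek$-connection on $\E_\g$, so Theorem~\ref{theorem;principal} supplies a $\liek$-dgm homotopy equivalence $j\colon\W\g\otimes\M\leftrightarrows\W\liek\otimes(\W\g\otimes\M)$ with homotopy inverse $C_{\g,\theta}$. The isomorphism $\W\h\cong\W\liek\otimes\W\g$ from Lemma~\ref{lemma;equivariant-ccw}\eqref{item;weil-product} identifies the target with $\W\h\otimes\M$.

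Finally, I promote this to an $\h$-equivariant statement. The $\g$-basicness of $\theta_\g$ from (i) forces $c_{\g,\theta}(\W\liek)\subseteq(\W\g\otimes\E)_\bbas{\g}$ by the universal property of $\W\liek$: the $\liek$- and $\g$-actions commute, $\W\liek$ has trivial $\g$-action, and $(\W\g\otimes\E)_\bbas{\g}$ is a $\liek$-sub-dga containing the image of $\theta_\g$. Consequently $C_{\g,\theta}$ commutes with the $\g$-operations, and the explicit homotopy of Theorem~\ref{theorem;principal}, built from $c_{\g,\theta}$ together with the $\g$-equivariant integration functional $J$ on the graded line, is likewise $\g$-equivariant. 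Restricting the $\liek$-dgm homotopy equivalence to $\h$-basic subcomplexes and using $(\W\g\otimes\M)_\bbas{\h}=(\W\g\otimes\M_\bbas{\liek})_\bbas{\g}=(\M_\bbas{\liek})_\g$ (where the first equality uses that $\liek$ acts trivially on $\W\g$) together with $(\W\h\otimes\M)_\bbas{\h}=\M_\h$ yields the claimed homotopy equivalence and hence the cohomology isomorphism. The hardest step is the $\g$-horizontality check in (i); granted that, (ii) is largely bookkeeping on top of Theorem~\ref{theorem;principal}.
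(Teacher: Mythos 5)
Your proposal is correct and follows essentially the same route as the paper: part~(i) by direct verification of the connection axioms together with the $\g$-invariance and $\g$-horizontality of $\theta_\g$, and part~(ii) by applying Theorem~\ref{theorem;principal} to the $\liek$-dga $\E_\g=\W\g\otimes\E$, observing that $c_{\g,\theta}$, $C_{\g,\theta}$ and the explicit homotopy commute with the $\g$-operations because $\theta_\g$ is $\g$-basic, and then passing to $\h$-basic subcomplexes via $(\W\g\otimes\M)_{\bbas{\h}}=(\M_{\bbas{\liek}})_\g$ and $(\W\h\otimes\M)_{\bbas{\h}}=\M_\h$. The only cosmetic difference is that you deduce $c_{\g,\theta}(\W\liek)\subseteq(\W\g\otimes\E)_{\bbas{\g}}$ from the universal property of $\W\liek$, whereas the paper checks the same thing directly on the generators $\vartheta(y)$ and $\dot\vartheta(y)$, whose images act by multiplication by the $\g$-basic elements $\theta_\g(y)$ and $d\theta_\g(y)$.
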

\glossary{cgtheta@$c_{\g,\theta}$, $\g$-equivariant characteristic
  homomorphism}
\glossary{Cgtheta@$C_{\g,\theta}$, $\g$-equivariant Cartan-Chern-Weil
  homomorphism}

\begin{proof}
\eqref{item;berline-vergne}~Let $\eta\in\liek$, $y\in\liek^*$ and
$\xi$, $\xi'\in\g$.  Then
\[
\iota(\eta)\iota_\theta(y)(\xi)=
-\iota(\eta)\iota(\xi)\theta(y)=\iota(\xi)\inner{\eta,y}=0,
\]
because the derivation $\iota(\xi)$ of $\E$ kills the scalar
$\inner{\eta,y}$.  It follows that
$\iota(\eta)\theta_\g(y)=\iota(\eta)\theta(y)=\inner{\eta,y}$.
Similarly,
\begin{align*}
L(\eta)\iota_\theta(\xi)&=-L(\eta)\iota(\xi)\theta(y)=
-\iota(\xi)L(\eta)\theta(y)=\iota(\xi)\theta(\ad^*(\eta)y)\\
&=-\iota_\theta(\ad^*(\eta)y)(\xi),
\end{align*}
because $[\xi,\eta]=0$.  Therefore
\begin{align*}
L(\eta)\theta_\g(y)&=L(\eta)\theta(y)+L(\eta)\iota_\theta(y)=
-\theta(\ad^*(\eta)y)-\iota_\theta(\ad^*(\eta)y)\\
&=-\theta_\g(\ad^*(\eta)y).
\end{align*}
This proves that $\theta_\g$ is a connection.  Next we have
\begin{align*}
(L(\xi)\iota_\theta(y))(\xi')&=
  L(\xi)(\iota_\theta(y)(\xi'))-\iota_\theta(y)(L(\xi)\xi')\\
&=-L(\xi)\iota(\xi')\theta(y)-\iota_\theta(y)([\xi,\xi'])\\
&=-\iota(\xi')L(\xi)\theta(y)-\iota([\xi,\xi'])\theta(y)
  +\iota([\xi,\xi'])\theta(y)\\
&=0,
\end{align*}
from which it follows that
$L(\xi)\theta_\g(y)=L(\xi)\theta_\g(y)+L(\xi)\iota_\theta(y)=0$.  So
$\theta_\g$ is $\g$-invariant.  The identity
$\iota(\xi)\theta_\g(y)=\iota(\xi)\theta(y)-\iota(\xi)\theta(y)=0$
shows that $\theta_\g$ is $\g$-horizontal.

\eqref{item;equivariant-ccw}~Because $\theta_\g$ is a
$\liek$-connection, the maps $c_{\g,\theta}$ and $C_{\g,\theta}$ are
$\liek$-dgm homomorphisms.  To see that they are $\h$-dgm
homomorphisms we must show that for all $v\in\W\liek$ the element
$c_{\g,\theta}(v)\in\E_\g$ is annihilated by $L(\xi)$ and $\iota(\xi)$
for all $\xi\in\g$.  It is enough to check this for the generators
$v=\vartheta(y)$ and $v=\dot\vartheta(y)$ of $\W\liek$ for
$y\in\liek^*$.  But the elements $\vartheta(y)$ and $\dot\vartheta(y)$
act on $\E_\g$ through multiplication by $\theta_\g(y)$ and
$d\theta_\g(y)$.  By \eqref{item;berline-vergne} the latter elements
are $\g$-basic, so they are annihilated by $L(\xi)$ and $\iota(\xi)$.
Thus $c_{\g,\theta}$ and $C_{\g,\theta}$ are $\h$-dgm homomorphisms.
It follows from Theorem~\ref{theorem;principal} that $C_{\g,\theta}$
is a $\liek$-homotopy inverse of the inclusion $j$.  According to the
proof of that theorem a $\liek$-homotopy from the identity map of
$\W\h\otimes\M$ to $j\circ C_{\g,\theta}$ is given by
\begin{equation}\label{equation;equivariant-homotopy}
H(v\otimes w\otimes m)=\biggl(\int_0^1c_\Theta(v)\biggr)(1\otimes
w\otimes m)
\end{equation}
for $v\in\W\liek$, $w\in\W\g$ and $m\in\M$.  The quantity
$c_\Theta\colon\W\liek\to\group{S}\otimes\D$ in this formula is the
characteristic homomorphism of the connection
$\Theta(x)=(1-s)\otimes\theta_0+s\otimes\theta_1$ on the algebra
$\group{S}\otimes\D$, where $\D=\End(\W\h\otimes\M)$.  Here $\theta_0$
and $\theta_1$ are the $\liek$-connections on $\D$ determined by the
following algebra homomorphisms $f_0$, $f_1\colon\W\liek\to\D$:
\begin{align*}
f_0(u)(v\otimes w\otimes m)&=uv\otimes w\otimes m,\\
f_1(u)(v\otimes w\otimes m)&=(-1)^{\abs{u}\abs{v}}v\otimes
c_{\g,\theta}(u)(w\otimes m)
\end{align*}
for $u$, $v\in\W\liek$, $w\in\W\g$ and $m\in\M$.  The connection
$\theta_0$ is the universal $\liek$-connection $\liek^*\to\W\liek$
pulled back to $\W\h\otimes\M=\W\liek\otimes\W\g\otimes\M$ and so is
$\g$-basic.  The connection $\theta_1$ is $\g$-basic by
\eqref{item;berline-vergne}.  Therefore $\Theta$ is $\g$-basic.  The
graded line $\group{S}$ is a trivial $\g$-dgm and the functional
$\int_0^1\colon\group{S}\otimes\D\to\D$ commutes with the operations
$\iota(\xi)$ and $L(\xi)$ for $\xi\in\g$.  It follows that the map $H$
defined in~\eqref{equation;equivariant-homotopy} commutes with
$\iota(\xi)$ and $L(\xi)$.  We conclude that the $\liek$-homotopy $H$
is an $\h$-homotopy.
\end{proof}

Under the assumptions of the theorem we call the map $C_{\g,\theta}$
the \emph{$\g$-equivariant Cartan-Chern-Weil homomorphism} defined by
the connection $\theta$.  The hypothesis that $\theta_\g$ should be
commutative is satisfied in all our applications, but one can extend
Theorem~\ref{theorem;equivariant-ccw} to noncommutative connections by
resorting to the noncommutative Weil algebra
of~\cite{alekseev-meinrenken;lie-chern-weil}.

In the special case where $\M=\A$ is a commutative $\h$-dga equipped
with a $\g$-invariant $\liek$-connection $\theta$, we have a
$\g$-equivariant characteristic homomorphism
\[
c_{\g,\theta}\colon\W\liek\to\W\g\otimes\A.
\]
Taking $\h$-basics and applying
Lemma~\ref{lemma;equivariant-ccw}\eqref{item;module-product} gives an
algebra homomorphism
\begin{equation}\label{equation;characteristic}
c_{\g,\theta}\colon S(\liek[2]^*)^{\liek}\to(\A_\bbas{\liek})_\g.
\end{equation}
Taking cohomology then gives an algebra homomorphism that is
independent of the connection,
\begin{equation}\label{equation;characteristic-cohomology}
c_\g\colon S(\liek[2]^*)^{\liek}\to H_\g(\A_\bbas{\liek}).
\end{equation}
The maps~\eqref{equation;characteristic} and
\eqref{equation;characteristic-cohomology} are also referred to as
$\g$-equivariant characteristic homomorphisms.

\subsection{Group versus algebra}

Let $\FF=\R$ or $\C$ and let $G$ be a Lie group with Lie algebra $\g$.
Let $\M$ be a $\g$-dgm and suppose that $G$ acts linearly on $\M$ in a
way which is compatible with the $\tilde\g$-module structure in the
sense
of~\cite[\S\,3.2.1]{guillemin-sternberg;supersymmetry-equivariant}.
(For this to make sense we must assume either that the $G$-module $\M$
is locally finite, or that $\M$ has a complete Hausdorff locally
convex topology in which the function $G\to M$ defined by $g\mapsto
gm$ is smooth for every $m\in\M$.)  The \emph{$G$-basic complex} of
$\M$ is the set of $\g$-horizontal $G$-fixed vectors,
$\M_\bbas{G}=\M_\hhor{\g}\cap\M^G$.  The \emph{$G$-basic cohomology}
is $H_\bbas{G}(\M)=H(\M_\bbas{G})$.  The \emph{$G$-equivariant
  cohomology} is $H_G(\M)=H((\W\g\otimes\M)_\bbas{G})$.

\begin{lemma}\label{lemma;component}
Suppose that $G$ has finitely many components.  Let $\Pi=\pi_0(G)$ be
the component group of $G$ and let $\M$ be a $\g$-dgm with a
compatible $G$-action.  Then $\M_\bbas{G}=(\M_\bbas{\g})^\Pi$,
$H_\bbas{G}(\M)\cong H_\bbas{\g}(\M)^\Pi$, and $H_G(\M)\cong
H_\g(\M)^\Pi$.
\end{lemma}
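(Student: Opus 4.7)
The plan is to reduce everything to the identity $\M_\bbas{G}=(\M_\bbas{\g})^\Pi$, then derive the two cohomology statements from it as essentially formal consequences.

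First I would establish the key preliminary fact that $\M^{G^0}=\M^\g$, where $G^0$ is the identity component. This is where the compatibility hypothesis on the $G$-action and $\g$-action is used: because the $G$-representation on $\M$ is locally finite (or admits the appropriate topology with smooth orbit maps) and $G^0$ is connected with Lie algebra $\g$, an element $m\in\M$ is $G^0$-fixed iff $L(\xi)m=0$ for all $\xi\in\g$. Combined with $G/G^0\cong\Pi$, this yields $\M^G=(\M^\g)^\Pi$.

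Next I need $\M_\hor=\M_\hhor{\g}$ to be $G$-invariant. The compatibility of the $G$- and $\tilde\g$-actions gives $g\cdot\iota(\xi)\cdot g^{-1}=\iota(\Ad(g)\xi)$, so if $\iota(\xi)m=0$ for all $\xi$ then $\iota(\xi)(gm)=g\iota(\Ad(g^{-1})\xi)m=0$. Hence $\M_\hor$, and also $\M_\bbas{\g}=\M^\g\cap\M_\hor$, carry $G$-actions which factor through $\Pi$ (the latter since $G^0$ acts trivially on $\M^\g$). Putting these ingredients together:
\[
\M_\bbas{G}=\M_\hor\cap\M^G=\M_\hor\cap(\M^\g)^\Pi=(\M_\hor\cap\M^\g)^\Pi=(\M_\bbas{\g})^\Pi,
\]
where the third equality holds because $\M_\hor$ is itself $\Pi$-stable. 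One must also note that $d$ preserves $\M_\bbas{G}$ (equivalently $(\M_\bbas{\g})^\Pi$): on a $G$-fixed horizontal element, $G$-equivariance of $d$ guarantees that $dm\in\M^G$, while the identity $\iota(\xi)dm=-L(\xi)m$ combined with $\M^G\subseteq\M^\g$ shows that $dm$ stays horizontal.

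For the cohomology statements I would invoke that $\Pi$ is finite and we work over a field of characteristic $0$, so the functor of $\Pi$-invariants is exact and commutes with cohomology. Thus
\[
H_\bbas{G}(\M)=H\bigl((\M_\bbas{\g})^\Pi\bigr)\cong H(\M_\bbas{\g})^\Pi=H_\bbas{\g}(\M)^\Pi.
\]
Finally, the statement $H_G(\M)\cong H_\g(\M)^\Pi$ is obtained by applying the identity $\M'_\bbas{G}=(\M'_\bbas{\g})^\Pi$ to the $\g$-dgm $\M'=\W\g\otimes\M$, equipped with the diagonal $G$-action coming from the given action on $\M$ and the $G$-action on $\W\g$ induced from the adjoint representation on $\V\g$; this action is compatible with the $\tilde\g$-action on $\W\g\otimes\M$, so the preliminary argument applies and gives $(\W\g\otimes\M)_\bbas{G}=(\M_\g)^\Pi$, whence $H_G(\M)\cong H_\g(\M)^\Pi$ by the same exactness argument.

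The main obstacle, such as it is, is conceptual rather than computational: verifying that the hypothesis ``$G$-action compatible with the $\tilde\g$-module structure'' (in the sense of the reference to Guillemin-Sternberg) is strong enough to yield both $\M^{G^0}=\M^\g$ and the conjugation identity $g\iota(\xi)g^{-1}=\iota(\Ad(g)\xi)$. Once these are in hand the proof is entirely formal.
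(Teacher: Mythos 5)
Your proposal is correct and follows essentially the same route as the paper's (much terser) proof: establish $\M_\bbas{G}=(\M_\bbas{\g})^\Pi$ from the compatibility of the $G$-action, pass to cohomology using the exactness of $\Pi$-invariants (averaging over the finite group $\Pi$ in characteristic $0$), and obtain the equivariant statement by applying the same identity to $\W\g\otimes\M$. The extra details you supply (that $\M^{G^0}=\M^\g$, that $\M_\hor$ is $G$-stable via $g\iota(\xi)g^{-1}=\iota(\Ad(g)\xi)$, and that $d$ preserves the subcomplex) are exactly what the paper's phrase ``follows from the compatibility of the $G$-action'' is eliding.
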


\begin{proof}
The equality $\M_\bbas{G}=(\M_\bbas{\g})^\Pi$ follows from the
compatibility of the $G$-action.  The group $\Pi$ being finite, a
simple averaging argument shows that
$H\bigl((\M_\bbas{\g})^\Pi\bigr)\cong H(\M_\bbas{\g})^\Pi$, which
proves the second assertion.  The last assertion follows from the
second applied to the module $\W\g\otimes\M$.
\end{proof}

\section{Fibre integration}\label{section;fibre}

\numberwithin{equation}{section}

By integrating a differential form along the fibres of a submersion
one obtains a form of lower degree on the base manifold.  We review a
few properties of this useful process.  Let $\pi\colon E\to B$ be a
smooth oriented locally trivial fibre bundle with fibre $F$.  The base
$B$ and the fibre $F$ are allowed to have a boundary.  Let $r$ be the
dimension of $F$.  \emph{Fibre integration} or \emph{pushforward} is a
map
\[\pi_*\colon\Omega_\cv(E)[r]\longto\Omega(B),\]
which is uniquely determined by the requirement that it satisfy the
\emph{projection formula}
\begin{equation}\label{equation;projection}
\pi_*(\beta\wedge\pi^*\alpha)=\pi_*\beta\wedge\alpha
\end{equation}
for all $\alpha\in\Omega(B)$ and $\beta\in\Omega_\cv(E)$.  The
subscript ``$\cv$'' indicates vertically compactly supported forms;
see \S\,\ref{section;preliminary}.  For existence and uniqueness of
$\pi_*$ see e.g.~\cite[\S\,13]%
{derham;varietes-differentiables-formes-courants;;1973},
\cite[\S\,6]{bott-tu;differential-forms},
or~\cite[\S\,10.1]{guillemin-sternberg;supersymmetry-equivariant}.  We
adopt the convention~\eqref{equation;projection}, which agrees
with~\cite{derham;varietes-differentiables-formes-courants;;1973}, but
differs by a sign from~\cite{bott-tu;differential-forms}
and~\cite{guillemin-sternberg;supersymmetry-equivariant}, so as to
comply with the Koszul sign rule.  The projection formula is
equivalent to
$\pi_*(\pi^*\alpha\wedge\beta)=(-1)^{rk}\alpha\wedge\pi_*\beta$, where
$k$ is the degree of $\alpha$.  In other words
\[[\pi_*,l(\alpha)]=0,\]
where $l(\alpha)$ denotes left multiplication by $\alpha$ on
$\Omega(B)$ and left multiplication by $\pi^*\alpha$ on
$\Omega_\cv(E)$, and $[\pi_*,l(\alpha)]$ denotes the (graded)
commutator of $\pi_*$ and $l(\alpha)$.  Thus $\pi_*$ is a degree $-r$
morphism of left $\Omega(B)$-modules.
\glossary{p.istar@$\pi_*$, fibre integration}

The proof of the next lemma is a routine verification based on the
projection formula.  In part~\eqref{item;stokes} we denote by
$E^\partial$ the manifold $E^\partial=\bigcup_{b\in B}\partial E_b$,
which is a bundle over $B$ with fibre $\partial F$ and projection
$\pi^\partial=\pi|_{E^\partial}$.  (If the base $B$ has no boundary,
then $E^\partial=\partial E$.)

\begin{lemma}\phantomsection\label{lemma;fibre}
\begin{enumerate}
\item\label{item;fibre-derivation}
For every pair of $\pi$-related vector fields $v\in\lie{X}(B)$ and
$w\in\lie{X}(E)$ we have $L(v)\circ\pi_*=\pi_*\circ L(w)$ and
$\iota(v)\circ\pi_*=(-1)^r\pi_*\circ\iota(w)$.
\item\label{item;stokes}
Let $\pi^\partial_*\colon\Omega_\cv(E^\partial)[r-1]\to \Omega(B)$ be
the fibre integral for $\pi^\partial\colon E^\partial\to B$.  Then
$[\pi_*,d]=\pi_*^\partial$.
\item\label{item;pullback}
A pullback diagram of oriented fibre bundles
\[
\begin{tikzcd}
E_2\ar[r,"f_E"]\ar[d,"\pi_2"']&E_1\ar[d,"\pi_1"]
\\
B_2\ar[r,"f_B"]&B_1
\end{tikzcd}
\]
induces a commutative diagram
\[
\begin{tikzcd}
\Omega_\cv(E_2)[r]\ar[d,"\pi_{2,*}"']&
\Omega_\cv(E_1)[r]\ar[l,"f_E^*"']\ar[d,"\pi_{1,*}"]
\\
\Omega(B_2)&\Omega(B_1)\ar[l,"f_B^*"'].
\end{tikzcd}
\]
\end{enumerate}
\end{lemma}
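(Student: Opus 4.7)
The plan is to verify all three assertions by reducing to a local model and performing a direct computation using the explicit description of $\pi_*$. Since $\pi_*$ is characterized by the projection formula~\eqref{equation;projection}, it is $\Omega(B)$-linear, so a partition of unity on $B$ subordinate to a cover by trivializing open sets reduces each identity to the case where $\pi\colon U\times F\to U$ is a trivial bundle over an open set $U\subseteq\R^n$. In this model every form $\omega\in\Omega_\cv(U\times F)$ decomposes uniquely as $\omega=\omega^{(r)}+\omega^{(<r)}$ according to fibre degree, with $\pi_*\omega^{(<r)}=0$, while $\pi_*\omega^{(r)}$ is given by literal integration of the coefficient in the normal form $\omega^{(r)}=dy^1\wedge\cdots\wedge dy^r\wedge\beta$ (where $\beta$ has no $dy^k$ factors) over the $y$-variables.

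For~\eqref{item;fibre-derivation}, decompose the $\pi$-related field on the trivial bundle as $w=w_h+w_v$, where $w_h$ is the obvious horizontal lift of $v$ (sharing the coefficients of $v$ in the $x$-variables) and $w_v$ is purely vertical. The contraction $\iota(w_v)$ applied to $dy^1\wedge\cdots\wedge dy^r$ produces a form of fibre-degree $r-1$, so the $w_v$ contributions are annihilated by $\pi_*$; meanwhile $\iota(w_h)$ commutes past $dy^1\wedge\cdots\wedge dy^r$ with Koszul sign $(-1)^r$ and transports through $\pi^*$ as $\iota(v)$ because $w_h$ is $\pi$-related to $v$. This yields the contraction identity $\iota(v)\circ\pi_*=(-1)^r\pi_*\circ\iota(w)$. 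The Lie derivative identity then follows from the contraction identity together with part~\eqref{item;stokes} via Cartan's magic formula $L=d\iota+\iota d$, with residual fibre-boundary terms cancelling because $\pi_*^\partial$ itself obeys the contraction identity for the restricted fields.

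For~\eqref{item;stokes}, differentiate $\omega=\omega^{(r)}+\omega^{(<r)}$ in the local model. The term $d\omega^{(<r)}$ may contribute a fibre-degree $r$ piece through $y$-differentiation, and integration by parts in the $y$-direction over $F$ converts these into a bulk term that assembles with the $\omega^{(r)}$ contributions into $(-1)^rd_B\pi_*\omega$ (the sign produced by the shift $[r]$), plus a fibre-boundary term which by definition is $\pi_*^\partial\omega$. This is the standard fibrewise Stokes' theorem.

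For~\eqref{item;pullback}, the identity $\pi_{2,*}\circ f_E^*=f_B^*\circ\pi_{1,*}$ is immediate in local trivializations of $E_1$ and $E_2$ compatible with the pullback square: because $f_E$ restricts to an orientation-preserving diffeomorphism of fibres over compatible base points, integration over the fibre and pullback along $f_B$ act on disjoint sets of coordinates and therefore commute. Globalizing via a partition of unity on $B_2$ finishes the verification. The main obstacle throughout is tracking the Koszul signs, particularly the $(-1)^r$ in~\eqref{item;fibre-derivation} and the matching sign in~\eqref{item;stokes}; these are entirely determined by the position of the pullback factor relative to the top fibre form in the decomposition $\omega^{(r)}=dy^1\wedge\cdots\wedge dy^r\wedge\beta$ and by the sign convention $d[r]=(-1)^rd$.
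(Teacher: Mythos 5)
Your verification is correct and is precisely the routine local computation that the paper leaves to the reader: the paper's entire ``proof'' of this lemma is the preceding sentence declaring it a routine verification based on the projection formula, and your reduction to a trivial bundle $U\times F$ via the $\Omega^0(B)$-linearity of $\pi_*$, followed by the fibre-degree decomposition and direct integration, is the standard way to carry that out; your signs in (i) and (ii) are consistent with the paper's convention $\pi_*(\beta\wedge\pi^*\alpha)=\pi_*\beta\wedge\alpha$ and $d[r]=(-1)^rd$. The one step that deserves explicit care is the final cancellation in your proof of (i): the two boundary terms produced by Cartan's formula and by part (ii), namely $\pi_*^\partial\circ\iota(w)$ (contract on $E$, then restrict to $E^\partial$) and $\pi_*^\partial\circ\iota(w|_{E^\partial})$ (restrict first, then contract), agree only when $w$ is tangent to $E^\partial$ --- indeed for $F=[0,1]$, $B$ a point, $w=\partial/\partial t$ and $v=0$ the identity $L(v)\pi_*=\pi_*L(w)$ fails outright, since $\pi_*L(w)(f\,dt)=f(1)-f(0)$. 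This tangency hypothesis is implicit in the lemma as stated (and is satisfied in every application the paper makes, e.g.\ the fields $(0,u)$ on $[0,1]\times M$ in Lemma~\ref{lemma;cochain-homotopy}), so it is a caveat to record rather than an error in your argument.
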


We spell out some consequences of part~\eqref{item;stokes}.  First, if
the fibre has no boundary, fibre integration commutes with the
differential.  See \S\,\ref{section;gtilde-modules} for the definition
of $\g$-differential graded modules, and their morphisms and
homotopies.

\begin{corollary}\label{corollary;cochain}
If $\partial F=\emptyset$, then
$\pi_*\colon\Omega_\cv(E)[r]\to\Omega^*(B)$ is a morphism of cochain
complexes of degree $-r$.  If in addition the bundle $E\to B$ is
equivariant with respect to the action of a Lie algebra $\g$ on $E$
and $B$, then $\pi_*$ is a degree $-r$ morphism of $\g$-differential
graded modules.
\end{corollary}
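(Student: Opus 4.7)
The plan is to read both assertions of the corollary off directly from Lemma~\ref{lemma;fibre}, carefully tracking the sign conventions imposed by the translation functor $[r]$ and the Koszul rule for graded commutators. No new construction is required; the only subtle point is bookkeeping.

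For the first statement, when $\partial F=\emptyset$ the ``boundary bundle'' $E^\partial=\bigcup_{b\in B}\partial E_b$ is empty, so the associated fibre integral $\pi_*^\partial$ is the zero map. Lemma~\ref{lemma;fibre}\eqref{item;stokes} then gives the graded commutator identity $[\pi_*,d]=0$, which in expanded form reads $\pi_*\circ d=(-1)^r\,d\circ\pi_*$. Viewing $\pi_*$ as a degree $0$ linear map from $\Omega_\cv(E)[r]$ to $\Omega(B)$, recall that the translated differential on the source is $(-1)^r d$; thus the displayed identity is exactly the chain-map equation $d\circ\pi_*=\pi_*\circ d_{\Omega_\cv(E)[r]}$. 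So $\pi_*$ is a morphism of cochain complexes of degree $-r$, as claimed.

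For the $\g$-equivariant statement, assume in addition that $\g$ acts on $E$ and $B$ compatibly with $\pi$. Then for every $\xi\in\g$ the fundamental vector fields $\xi_E\in\X(E)$ and $\xi_B\in\X(B)$ are $\pi$-related by $\g$-equivariance of the projection. Applying Lemma~\ref{lemma;fibre}\eqref{item;fibre-derivation} with $v=\xi_B$ and $w=\xi_E$ gives
\[
L(\xi_B)\circ\pi_*=\pi_*\circ L(\xi_E),\qquad
\iota(\xi_B)\circ\pi_*=(-1)^r\pi_*\circ\iota(\xi_E).
\]
Because $\pi_*$ has degree $-r$, these identities are precisely the graded commutator relations $[L(\xi),\pi_*]=0$ and $[\iota(\xi),\pi_*]=0$ required in \S\,\ref{section;gtilde-modules} for $\pi_*$ to be a morphism of $\g$-differential graded modules. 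Combined with the $d$-compatibility already established, this completes the verification.

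There is no serious obstacle: the only thing to watch is that the sign $(-1)^r$ produced by Lemma~\ref{lemma;fibre} in both the $d$- and $\iota$-identities agrees with what the Koszul sign rule predicts for the graded commutator of a degree $-r$ map with a degree $1$ or degree $-1$ operator, and in both cases the sign is $(-1)^{(-r)(\pm1)}=(-1)^r$. So the conventions chosen in \S\,\ref{section;preliminary} (the translated-complex differential $d[k]=(-1)^k d$) and the conventions in Lemma~\ref{lemma;fibre} (the sign in the projection formula) are perfectly matched, and the corollary is immediate.
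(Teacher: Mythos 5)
Your proof is correct and follows exactly the route the paper intends: the paper offers no separate argument for this corollary, presenting it as an immediate consequence of Lemma~\ref{lemma;fibre}, with part~\eqref{item;stokes} giving $[\pi_*,d]=\pi^\partial_*=0$ when $\partial F=\emptyset$ and part~\eqref{item;fibre-derivation} applied to the $\pi$-related fundamental vector fields giving compatibility with $L(\xi)$ and $\iota(\xi)$. Your sign bookkeeping for the shift $[r]$ and the graded commutators is accurate, so nothing is missing.
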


If on the other hand $E$ is the cylinder $[0,1]\times B$, then the
bundle $E^\partial$ consists of two copies $E_0$ and $E_1$ of $E$, and
$\pi^\partial_*\beta=\beta|_{E_1}-\beta|_{E_0}$.

\begin{corollary}\label{corollary;cylinder}
Let $E=[0,1]\times B$.  Let $i_t\colon B\to E$ be the embedding
$i_t(b)=(t,b)$ and $i_t^*\colon\Omega(E)\to\Omega(B)$ the induced
morphism.  Then we have the cylinder formula
$i_1^*-i_0^*=d\pi_*+\pi_*d$.  If a Lie algebra $\g$ acts on $B$, then
$\pi_*$ is a homotopy of $\g$-differential graded modules.
\end{corollary}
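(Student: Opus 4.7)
The plan is to obtain the cylinder formula as a direct specialization of Lemma~\ref{lemma;fibre}\eqref{item;stokes} to the projection $\pi\colon E=[0,1]\times B\to B$, whose fibre is the interval $[0,1]$ (so $r=1$).

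First I would identify the boundary bundle. The fibre $[0,1]$ has two boundary points, so $E^\partial$ is the disjoint union $\{0\}\times B\sqcup\{1\}\times B$, i.e.\ two copies of $B$. Integration over a zero-dimensional oriented fibre is evaluation, with a sign determined by the induced boundary orientation: the standard orientation on $[0,1]$ gives the boundary as the formal difference $\{1\}-\{0\}$, hence $\pi_*^\partial\beta=i_1^*\beta-i_0^*\beta$ for every $\beta\in\Omega(E)$. Substituting this into $[\pi_*,d]=\pi_*^\partial$ and expanding the graded commutator with $\lvert\pi_*\rvert=-r=-1$ and $\lvert d\rvert=1$ gives
\[
d\pi_*+\pi_*d=\pi_*d-(-1)^{1}d\pi_*=[\pi_*,d]=i_1^*-i_0^*,
\]
which is the cylinder formula.

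For the equivariant statement, I would lift the $\g$-action on $B$ trivially to $E=[0,1]\times B$, so that for each $\xi\in\g$ the generator $\xi_E=(0,\xi_B)$ is $\pi$-related to $\xi_B$. Then Lemma~\ref{lemma;fibre}\eqref{item;fibre-derivation} yields $L(\xi_B)\pi_*=\pi_*L(\xi_E)$ and $\iota(\xi_B)\pi_*=-\pi_*\iota(\xi_E)$. Reading these as graded commutators, with $\lvert L(\xi)\rvert=0$ and $\lvert\iota(\xi)\rvert=-1=\lvert\pi_*\rvert$, the sign $(-1)^r=-1$ in the contraction identity is exactly what is needed so that $[\iota(\xi),\pi_*]=\iota(\xi)\pi_*+\pi_*\iota(\xi)=0$ and $[L(\xi),\pi_*]=0$. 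Together with the cylinder formula, this is precisely the definition (see~\S\,\ref{section;gtilde-modules}) of $\pi_*$ being a homotopy of $\g$-differential graded modules between $i_0^*$ and $i_1^*$.

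The only subtlety is sign bookkeeping in the graded commutators (both for $d$ and for $\iota(\xi)$), which must be done consistently with the Koszul convention used in Lemma~\ref{lemma;fibre}; once this is handled, there is essentially no obstacle, since the corollary is a bare specialization of that lemma.
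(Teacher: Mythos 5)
Your proposal is correct and is exactly the route the paper takes: Corollary~\ref{corollary;cylinder} is presented as a direct specialization of Lemma~\ref{lemma;fibre}, using part~\eqref{item;stokes} with $E^\partial=E_0\sqcup E_1$ and $\pi_*^\partial\beta=\beta|_{E_1}-\beta|_{E_0}$ for the cylinder formula, and part~\eqref{item;fibre-derivation} with the trivial lift of the $\g$-action to $[0,1]\times B$ for the equivariant statement. Your sign bookkeeping (the factor $(-1)^r=-1$ in the contraction identity matching the graded commutator convention for two degree $-1$ operators) is consistent with the paper's conventions.
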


Substituting a homotopy of maps into the cylinder formula gives the
homotopy formula.

\begin{corollary}\label{corollary;homotopy}
Let $h\colon[0,1]\times B_1\to B_2$ be a smooth homotopy.  Let
$h_t(b)=h(t,b)$ and let $\pi\colon[0,1]\times B_1\to B_1$ be the
projection.  Then $\pi_*h^*\colon\Omega(B_2)[1]\to\Omega(B_1)$ is a
cochain homotopy: $h_1^*-h_0^*=d\pi_*h^*+\pi_*h^*d$.  If $h$ is
equivariant with respect to a Lie algebra $\g$ acting on $B_1$ and
$B_2$, then $\pi_*h^*$ is a homotopy of $\g$-differential graded
modules.
\end{corollary}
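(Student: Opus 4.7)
The plan is to derive this corollary directly from Corollary~\ref{corollary;cylinder} by pulling back along $h$. Set $E=[0,1]\times B_1$ with projection $\pi\colon E\to B_1$. For any $\alpha\in\Omega(B_2)$, apply the cylinder formula $i_1^*-i_0^*=d\pi_*+\pi_*d$ to $h^*\alpha\in\Omega(E)$. Since $h\circ i_t=h_t$, the left-hand side becomes $(h_1^*-h_0^*)\alpha$; since $h^*$ intertwines $d$, the right-hand side becomes $d(\pi_* h^*\alpha)+\pi_* h^* (d\alpha)$. This yields the cochain homotopy identity $h_1^*-h_0^*=d(\pi_*h^*)+(\pi_*h^*)d$ as operators $\Omega(B_2)[1]\to\Omega(B_1)$.

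For the equivariant refinement, equip $E=[0,1]\times B_1$ with the $\g$-action that is trivial in the $[0,1]$-direction and agrees with the given action on $B_1$; denote the corresponding transverse vector field by $\xi_E$. Then $\pi$ is $\g$-equivariant by construction, and the hypothesis that $h$ is a $\g$-equivariant homotopy is precisely the statement that $h$ is $\g$-equivariant with respect to this action on $E$. Consequently, representatives of $\xi_E$ and $\xi_{B_1}$ are $\pi$-related, while representatives of $\xi_E$ and $\xi_{B_2}$ are $h$-related.

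By Lemma~\ref{lemma;fibre}\eqref{item;fibre-derivation}, applied to the $1$-dimensional fibre $[0,1]$, we have $L(\xi_{B_1})\circ\pi_*=\pi_*\circ L(\xi_E)$ and $\iota(\xi_{B_1})\circ\pi_*=-\pi_*\circ\iota(\xi_E)$; and pullback along $\g$-equivariant maps commutes with the $L(\xi)$ and $\iota(\xi)$ operations. Composing, we obtain
\[
L(\xi_{B_1})\circ\pi_* h^*=\pi_* h^*\circ L(\xi_{B_2}),\qquad
\iota(\xi_{B_1})\circ\pi_* h^*=-\pi_* h^*\circ\iota(\xi_{B_2}),
\]
which is exactly the vanishing of the graded commutators $[L(\xi),\pi_*h^*]$ and $[\iota(\xi),\pi_*h^*]$ (noting that $\pi_*h^*$ has degree $-1$, so the graded commutator with $\iota(\xi)$ is the anticommutator). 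Together with the cochain homotopy identity this shows $\pi_*h^*$ is a homotopy of $\g$-differential graded modules.

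There is really no obstacle here: the whole proof is a direct substitution into the cylinder formula followed by a routine sign check. The only subtlety worth flagging is the choice of the trivial extension of the $\g$-action to the $[0,1]$-factor, which is what allows Lemma~\ref{lemma;fibre}\eqref{item;fibre-derivation} to be invoked and makes $\pi$ automatically $\g$-equivariant.
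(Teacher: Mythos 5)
Your proof is correct and follows exactly the route the paper intends: the paper's entire justification is the one-line remark that substituting a homotopy into the cylinder formula of Corollary~\ref{corollary;cylinder} gives the result, and your equivariant sign-checking (trivial extension of the $\g$-action to the $[0,1]$-factor, Lemma~\ref{lemma;fibre}\eqref{item;fibre-derivation} with $r=1$, and the anticommutator reading of $[\iota(\xi),\pi_*h^*]$) matches the computation the authors carry out explicitly in the proof of Lemma~\ref{lemma;cochain-homotopy}.
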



\bibliographystyle{amsplain}

\bibliography{hamilton}


\end{document}